\documentclass[a4paper, 11pt]{article}

\usepackage{fullpage}

\linespread{1.2}

\usepackage{amsmath}
\usepackage{amsthm}
\usepackage{amsfonts}
\usepackage{tikz-cd}
\usepackage[utf8x]{inputenc}
\usepackage{enumerate}
\usepackage[english]{babel}
\usepackage{amssymb}
\usepackage{esint}
\usepackage{hyperref}
\usepackage{bm}
\usepackage{bbm}
\usepackage[mathscr]{eucal}

\theoremstyle{plain}
\newtheorem{theorem}{Theorem}[section]
\newtheorem{lemma}[theorem]{Lemma}

\newtheorem{proposition}[theorem]{Proposition}
\theoremstyle{definition}
\newtheorem{definition}[theorem]{Definition}
\theoremstyle{remark}

\numberwithin{equation}{section}

\DeclareMathOperator{\divv}{div}

\DeclareMathOperator*{\essinf}{ess\:inf}
\DeclareMathOperator{\trace}{Tr}

\def\softd{{\leavevmode\setbox1=\hbox{d}%
		\hbox to 1.05\wd1{d\kern-0.4ex{\char039}\hss}}}

\allowdisplaybreaks[1]

\title{ On well-posedness of quantum fluid systems \\ in the class of dissipative solutions}
\author{Danica Basari\'{c} \thanks{The work of D.B. was funded from the Czech Science Foundation (GA\v{C}R), Grant Agreement 21-02411S. The institute of Mathematics of the Czech Academy of Sciences is supported by RVO:67985840. $^ \dagger$ The research of T.T. was supported by NSFC:11801138} \and Tong Tang\ $^ \dagger$}
\date{}

\begin{document}
	
	\maketitle

	\begin{center}
		$^{*}$ Institute of Mathematics of the Czech Academy of Sciences \\
		\v{Z}itn\'{a} 25, 115 67 Praha 1, Czech Republic\\[0.1cm]
		basaric@math.cas.cz
	\end{center}
	
	\vspace{0.05cm}
	
	\begin{center}
		$^ \dagger$ School of Mathematical Science,\\
		Yangzhou University, Yangzhou 225002, P.R. China\\[0.1cm]
		tt0507010156@126.com
	\end{center}

	\vspace{0.2cm}

	\begin{abstract}
		The main objects of the present work are the quantum Navier--Stokes and quantum Euler systems; for the first one, in particular, we will consider constant viscosity coefficients. We deal with the concept of dissipative solutions, for which we will first prove the weak-strong uniqueness principle and afterwards, we will show the global existence for any finite energy initial data. Finally, we will prove that both systems admit a semiflow selection in the class of dissipative solutions.
	\end{abstract}

	\textbf{Mathematics Subject Classification:} 35A01, 35Q35, 76N10
	
	\vspace{0.2 cm}
	
	\textbf{Keywords:} quantum fluid systems; dissipative solutions; weak--strong uniqueness; existence; semiflow selection

	\section{Introduction}
	
	At temperatures close to absolute zero, quantum effects appear relevant in the motion of some fluids: instead of individual atoms bouncing around, the particles move like one single body and, as a consequence of the vanishing viscosity, the fluid start to  ``creep" along the surfaces of its container, coming out of it if the latter is not properly sealed. This bizarre phenomena is just one of the many applications that motivate the study of \textit{quantum fluid dynamics}: it provides useful tools for understanding not only the behaviour of atomic Bose--Einstein condensates and the transition of the aforementioned fluids into zero-viscosity ones (\textit{superfluids}) \cite{LofMor}, but also the mechanics of quantum semiconductors \cite{FerZho} and the trajectories arising from the de Broglie--Bohm theory \cite{Wya}.
	
	\subsection{The system}
	
	Motivated by the Thomas--Fermi--Dirac–Weizs\"{a}cker density functional theory \cite{ZarTso}, the motion of a quantum fluid can be modelled starting from the classical systems describing viscous or inviscid fluids and adding an extra term containing the Bohm quantum potential \cite{SlaTse}
	\begin{equation} \label{quantum potential}
		Q(\varrho, \nabla_x \varrho, \nabla_x^2 \varrho)= \frac{\hbar}{2} \frac{\Delta_x \sqrt{\varrho}}{\sqrt{\varrho}},
	\end{equation}
	where $\varrho$ denotes the density of the fluid. More precisely, we are going to consider the following two models.
	\begin{itemize}
		\item The compressible \textit{quantum Navier--Stokes} system, whenever we are dealing with viscous fluids:
		\begin{align}
		\partial_t \varrho + \divv_x (\varrho\textbf{u} )&=0, \label{nsk continuity equation}\\
		\partial_t (\varrho \textbf{u}) + \divv_x \left( \varrho\textbf{u} \otimes \textbf{u} \right) + \nabla_x p(\varrho) &= \divv_x \mathbb{S}(\nabla_x \textbf{u})+  \varrho \nabla_x Q(\varrho, \nabla_x \varrho, \nabla_x^2 \varrho). \label{nsk balance of momentum}
		\end{align}
		\item The compressible \textit{quantum Euler} system, in case of inviscid fluids:
		\begin{align}
		\partial_t \varrho + \divv_x \textbf{J} &=0, \label{ek continuity equation}\\
		\partial_t \textbf{J} + \divv_x \left(\frac{\textbf{J} \otimes \textbf{J}}{\varrho}\right)  +  \nabla_x p(\varrho) &=\varrho \nabla_x Q(\varrho, \nabla_x \varrho, \nabla_x^2 \varrho). \label{ek balance of momentum}
		\end{align}
	\end{itemize}
	In both systems, the unknown variables are the density $\varrho=\varrho(t,x)$, the velocity $\textbf{u}=\textbf{u}(t,x)$ and the momentum $\textbf{J}=(\varrho \textbf{u})(t,x)$ of the fluid, while $p=p(\varrho)$ denotes the barotropic pressure, $\mathbb{S}=\mathbb{S}(\nabla_x \textbf{u})$ the viscous  stress tensor and $Q=Q(\varrho, \nabla_x \varrho, \nabla_x^2 \varrho)$ the quantum potential defined in \eqref{quantum potential}. More precisely, we will consider the standard isentropic pressure
	\begin{equation} \label{pressure}
	p(\varrho) = a\varrho^{\gamma}
	\end{equation}
	with $a$ a positive constant and $\gamma>\frac{d}{2}$ the adiabatic exponent, while the viscous stress tensor will be a linear function of the velocity gradient
	\begin{equation} \label{viscosity}
	\mathbb{S}(\nabla_x \textbf{u})= \mu \left(\nabla_x \textbf{u}+\nabla_x^{\top} \textbf{u}- \frac{2}{d}(\divv_x \textbf{u})\mathbb{I}\right) + \lambda(\divv_x \textbf{u})\mathbb{I}
	\end{equation}
	where $\mu >0$ and $\lambda \geq 0$ denote the shear and bulk viscosities, respectively.  Notice that we can write
	\begin{equation*}
	\varrho \nabla_x Q(\varrho, \nabla_x \varrho, \nabla_x^2 \varrho)= \divv_x \mathbb{K}(\varrho, \nabla_x \varrho, \nabla_x^2 \varrho)
	\end{equation*}
	with
	\begin{equation*}
	\mathbb{K}(\varrho, \nabla_x \varrho, \nabla_x^2 \varrho)  = \frac{\hbar}{4} \big( \nabla_x^2 \varrho- 4 \ \nabla_x \sqrt{\varrho} \otimes \nabla_x\sqrt{\varrho} \big).
	\end{equation*}
	
	We will study both systems on the set $(0,\infty) \times \Omega$, where $\Omega \subset \mathbb{R}^d$, $d=2,3$, is a bounded domain of class $C^2$, on the boundary of which we impose the homogeneous Neumann condition for the density and the no-slip condition for the velocity
	\begin{equation} \label{nsk boundary condition}
	\nabla_x \varrho \cdot \textbf{n}|_{\partial \Omega}=0, \quad \textbf{u}|_{\partial \Omega}=0,
	\end{equation}
	when considering system \eqref{nsk continuity equation}--\eqref{nsk balance of momentum}, while we replace the boundary condition for the velocity with the one for the momentum
	\begin{equation} \label{ek boundary conditions}
	\nabla_x \varrho \cdot \textbf{n}|_{\partial \Omega}=0, \quad \textbf{J} \cdot \textbf{n}|_{\partial \Omega}=0,
	\end{equation}
	when considering system \eqref{ek continuity equation}--\eqref{ek balance of momentum}.
	
	The last ingredient we need to formally close the systems is the energy. Introducing the \textit{pressure potential} $P=P(\varrho)$ as a solution of
	\begin{equation} \label{equation pressure potential}
		\varrho P'(\varrho)- P(\varrho) = p(\varrho),
	\end{equation}
	which we will consider as
	\begin{equation*}
		P(\varrho)= \frac{a}{\gamma-1} \varrho^{\gamma},
	\end{equation*}
	the \textit{total energy balance} associated to the quantum Navier--Stokes system \eqref{nsk continuity equation}--\eqref{nsk balance of momentum} with the boundary conditions \eqref{nsk boundary condition} is
	\begin{equation} \label{nsk total energy balance}
	\frac{\textup{d}}{\textup{d}t} \int_{\Omega} E(t) \ \textup{d}x + \int_{\Omega} \mathbb{S}(\nabla_x \textbf{u}): \nabla_x \textbf{u} \ \textup{d}x =0 \quad \mbox{with} \quad  E(t) = \frac{1}{2}\varrho |\textbf{u}|^2 +P(\varrho)+ \frac{\hbar}{2}  |\nabla_x \sqrt{\varrho}|^2,
	\end{equation}
	and similarly, the total energy balance associated to the quantum Euler system \eqref{ek continuity equation}--\eqref{ek balance of momentum} with the boundary conditions \eqref{ek boundary conditions} reads
	\begin{equation} \label{ek total energy balance}
		\frac{\textup{d}}{\textup{d}t} \int_{\Omega} E(t) \ \textup{d}x =0 \quad \mbox{with} \quad  E(t) = \frac{1}{2}\frac{|\textbf{J}|^2}{\varrho} +P(\varrho)+ \frac{\hbar}{2}  |\nabla_x \sqrt{\varrho}|^2.
	\end{equation}
	See Section \ref{Energies} for more details.
	
	We finally point out that the quantum potential $Q=Q(\varrho, \nabla_x \varrho, \nabla_x^2 \varrho)$ can be rewritten as
	\begin{equation} \label{Korteweg term}
		Q(\varrho, \nabla_x \varrho, \nabla_x^2 \varrho) = K(\varrho) \Delta_x \varrho + \frac{1}{2}K'(\varrho) |\nabla_x \varrho|^2,
	\end{equation}
	choosing the function $K=K(\varrho)$ such that
	\begin{equation*}
		K(\varrho)= \frac{\hbar}{4\varrho}.
	\end{equation*}
	Systems \eqref{nsk continuity equation}, \eqref{nsk balance of momentum} and \eqref{ek continuity equation}, \eqref{ek balance of momentum} with the quantum potential replaced by the more general expression appearing on the right-hand side of \eqref{Korteweg term} are called \textit{Navier--Stokes--Korteweg} and \textit{Euler--Korteweg} systems, respectively; usually, $K: (0,\infty) \rightarrow (0,\infty) $ is a smooth function.
	
	\subsection{State of the art}
	
	Given their importance in many applications, quantum fluid systems were widely studied in the last years. However, in literature we may typically encounter density-dependent instead of constant viscosity coefficients in the definition of the viscous stress tensor \eqref{viscosity}, leading to more mathematical difficulties due to the possible presence of vacuum. This alternative formulation is a consequence of a different derivation of the model, based on a Chapman--Enskog expansion of the Wigner function \cite{BruMeh}. For the quantum Navier--Stokes system with non-constant viscosity coefficients, the existence of global-in-time weak solutions with special test function $\rho\phi$ instead of classical test function $\phi$ on the $d$-dimensional torus, $d=2,3$, was shown by J\"{u}ngel \cite{Jun} with the constraint $\gamma>3$ for $d=3$ and the viscosity constant smaller than the scaled Plank constant; his result was later improved by Dong \cite{Don} and by Jiang \cite{Jia}, including the cases when the viscosity constant is equal and bigger, respectively, to the scaled Plank constant. Subsequently, the existence of global-in-time weak solutions with the standard test function $\phi$ was achieved with the help of extra terms in the equations that could guarantee the velocity to be well-defined even in the vacuum region: for instance, Gisclon and Lacroix-Violet \cite{GisLac} considered a cold pressure term, while Vasseur and Yu \cite{VasYu} added a damping term in the balance of momentum. Inspired by Li and Xin \cite{l},  Antonelli and Spirito \cite{AntSpi} proved the global-in-time existence result for weak solutions without any extra terms, but requiring the viscosity and capillarity constants to be comparable. Recently, this assumption was removed by the same authors in \cite{a3}, and by Lacroix-Violet and Vasseur \cite{LacVas}. Stability, i.e. the continuous dependence of solutions on initial data, was studied by Giesselmann, Lattanzio and Tzavaras \cite{GieLatTza} via a relative energy approach. Recently, Bresch, Gisclon and Lacroix-Violet \cite{BreGisLac} proved the existence of global-in-time dissipative solutions on the $d$-dimensional torus, $d=2,3$, of the quantum Navier--Stokes system with a linear density-dependent shear viscosity and zero bulk viscosity. Moreover, taking the vanishing viscosity limit, they obtained the existence of global-in-time dissipative solutions to the quantum Euler system \eqref{ek continuity equation}, \eqref{ek balance of momentum}. For the latter, there are several results concerning well-posedness in the class of weak solutions. Donatelli, Feireisl and Marcati \cite{DonFeiMar} showed that the system is ill-posed as uniqueness fails to be verified: for sufficiently smooth initial data, the system admits infinitely many weak solutions, even considering only the class of those satisfying the energy inequality. Later on, Antonelli and Marcati \cite{AntMar} proved the existence of global-in-time irrotational weak solutions by converting the Euler system into the non-linear Schr\"{o}dinger one, while Audiard and Haspot  \cite{AudHas} showed global well-posedness for small irrotational data in dimension $d\geq 3$. Last but not least, let us stress that important progresses have been made on singular limits and other topics for quantum fluid models \cite{a4,ca,c,d1,d2,Ju,li}.

	Even though there is a wide range of significant results concerning well-posedness of quantum  systems, we emphasize that there aren't any regarding the existence of global-in-time weak solutions for the quantum Navier--Stokes system \eqref{nsk continuity equation}, \eqref{nsk balance of momentum} with constant viscosity coefficients, even in dimension $d=2$, and for the quantum Euler system \eqref{ek continuity equation}, \eqref{ek balance of momentum} for large initial data, as pointed out by Bresch et al. \cite{BreGisLac}. Therefore, the latter are important and interesting issues.
	
	\subsection{Structure of the paper}
	
	In the present study, we are interested in well-posedness of the aforementioned quantum systems; specifically, we are concerned with existence and uniqueness of global-in-time solutions for any finite energy initial data. Inspired by the work of Abbatiello, Feireisl and Novotn\'{y} \cite{AbbFeiNov}, we will consider \textit{dissipative solutions}, i.e. solutions satisfying  the equations and the energy inequality in the distributional sense but with extra ``defect terms", which we may call \textit{Reynolds stresses}, collecting the possible oscillations and/or concentrations arising from the convective, pressure and quantum terms, cf. Definitions \ref{nsk dissipative solution} and \ref{ek dissipative solution}. This notion of solution can be seen as a generalization of the concept of dissipative measure--valued solution, developed by Feireisl, Gwiazda, \'{S}wierczewska-Gwiazda and Wiedemann \cite{FeiGwiSwiWie}, implying in particular that they can be taken into account in the analysis of convergence of certain numerical schemes and, therefore, they can be identified as strong limits of finite element--finite volume schemes in the spirit of Feireisl and Luk\'{a}\v{c}ov\'{a}--Medvi\softd ov\'{a} \cite{FeiLuk}. We point out that our definition of dissipative solution differs from the one considered in \cite{BreGisLac}, as the latter is based on a relative energy inequality. A natural question is whether strong solutions are uniquely determined in the class of dissipative solutions; in order to give a positive answer, we will prove the \textit{weak-strong uniqueness principle}: if the system admits a sufficiently regular solution in the classical sense then it must coincide with the dissipative solution emanating from the same initial data, cf. Theorems \ref{nsk weak-strong uniqueness} and \ref{ek weak-strong uniqueness}. As the name suggests, this technique was first developed by Prodi \cite{Pro} considering weak/strong solutions for the incompressible Navier--Stokes equations, and later adapted for compressible systems (see e.g. \cite{Fei}, \cite{FeiJinNov}, \cite{FeiNov}, \cite{Ger}, \cite{GwiSwiWie}, \cite{Wie}). Our next goal is the existence of dissipative solutions. More precisely, we will first prove the existence result for the quantum Navier--Stokes system \eqref{nsk continuity equation}, \eqref{nsk balance of momentum} applying the classical fixed point argument in the spirit of \cite{Fei1}, cf. Theorem \ref{nsk existence}, and afterwards, we will obtain the existence result for the quantum Euler system \eqref{ek continuity equation}, \eqref{ek balance of momentum} as a vanishing viscosity limit of the Navier--Stokes equations, cf. Theorem \ref{ek existence}. Finally, to handle the problem of uniqueness, especially in view of the ``negative" result stated in \cite{DonFeiMar} for the quantum Euler system, we may look for that particular dissipative solution in the class of the ones emanating from the same initial data satisfying the \textit{semigroup} or \textit{semiflow property}: if we let the system run from time $0$ to time $t_1$, we restart it and let it run for a time interval of amplitude $t_2$, the trajectory described by the selected solution will be the same as we have run the system directly from time $0$ to time $t_1+t_2$. We will refer to the process of finding such particular solution as \textit{semiflow selection}, cf. Definition \ref{semiflow selection}. Clearly, if uniqueness holds, the semigroup property is verified by any solution and the semiflow selection is simply the map associating to any admissible data that one unique solution emanating from it. The construction of a semiflow selection was originally a stochastic tool, first developed by Krylov \cite{Kry} to study well-posedness of certain systems and later adapted by Flandoli and Romito \cite{FlaRom}, Breit, Feireisl and Hofmanov\'{a} \cite{BreFeiHof1} for the incompressible and compressible, respectively, Navier--Stokes systems. Inspired by deterministic adaptation of Cardona and Kapitanski \cite{CarKap}, we will prove the existence of a semiflow selection for the quantum Navier--Stokes and quantum Euler systems in the class of dissipative solutions, cf. Theorems \ref{nsk semiflow selection} and \ref{ek semiflow selection}. We will essentially follow the same strategy developed by Breit, Feireisl and Hofmanov\'{a} \cite{BreFeiHof} for the compressible Euler system in the class of measure--valued solutions. However, there will be a slightly difference in the choice of the trajectory space: instead of the space of continuous functions as in \cite{CarKap} or the space of integrable functions as in \cite{BreFeiHof}, we will work with the Skorokhod space of c\`{a}gl\`{a}d (a French acronym for ``left-continuous and having right-hand limits") functions. The advantages of this choice is that on the one hand we are able to consider the energy, which is typically a non--increasing quantity with possible jumps, as a third state variable, while on the other hand we will get the existence of well-defined semiflow selections at any time. We point out that, thanks to the weak-strong uniqueness principle, solutions in the classical sense are always contained in the selected semiflow as long as they exist.
	
	\section{Dissipative solutions}
	
	In this section, we provide the definition of dissipative solution for both the quantum Navier--Stokes and quantum Euler systems. We will refer to the measure $\mathfrak{R}$ appearing in the weak formulations of the balance of momentum and energy inequality as \textit{Reynolds stress}. For the definition of all the involved spaces see Section \ref{Function spaces}.
	
	\begin{definition}[Dissipative solution of the quantum Navier--Stokes system] \label{nsk dissipative solution}
		The pair of functions $[\varrho,\textbf{u}]$ with total energy $E$ constitutes a \textit{dissipative solution} to problem \eqref{nsk continuity equation}--\eqref{nsk balance of momentum} with the isentropic pressure \eqref{pressure}, the viscous stress tensor \eqref{viscosity}, the boundary conditions \eqref{nsk boundary condition} and the initial data
		\begin{equation*}
		[\varrho(0,\cdot), (\varrho \textbf{u})(0,\cdot), E(0-)]=[\varrho_0, \textbf{J}_0, E_0] \in L^{\gamma}(\Omega)\times L^{\frac{2\gamma}{\gamma+1}}(\Omega; \mathbb{R}^d) \times [0,\infty)
		\end{equation*}
		if the following holds:
		\begin{itemize}
			\item[(i)]  \textit{regularity class}: $\varrho > 0$ in $(0,\infty) \times \Omega$ and
			\begin{align}
			\varrho &\in C_{\rm weak,loc}([0,\infty);  L^{\gamma}(\Omega)) \cap L^{\infty}(0,\infty; W^{1,\frac{2\gamma}{\gamma+1}}(\Omega)) \\[0.1cm]
			\varrho\textbf{u} &\in C_{\rm weak,loc}([0,\infty); L^q(\Omega;\mathbb{R}^d)), \quad q= \max \left\{ \frac{2\gamma}{\gamma+1},  \frac{4\gamma d}{(3d-2)\gamma+d}\right\}, \label{weak convergence momenta}\\[0.1cm]
			\textbf{u} &\in L^2_{\textup{loc}}(0,\infty; W^{1,2}_0(\Omega; \mathbb{R}^d)),\\[0.1cm]
			E &\in \mathfrak{D} ([0,\infty));
			\end{align}
			\item[(ii)] \textit{weak formulation of the continuity equation}: the integral identity
			\begin{equation} \label{nsk weak formulation continuity equation}
			\left[ \int_{\Omega} \varrho \varphi(t,\cdot) \ {\rm d}x \right]_{t=0}^{t=\tau}= \int_{0}^{\tau} \int_{\Omega} [\varrho \partial_t \varphi + \varrho\textbf{u}\cdot \nabla_x \varphi] \ {\rm d}x {\rm d}t
			\end{equation}
			holds for any $\tau >0$ and any $\varphi \in C^1_c([0,\infty)\times \overline{\Omega})$;
			\item[(iii)] \textit{weak formulation of the balance of momentum}: there exists
			\begin{equation*}
			\mathfrak{R} \in L^{\infty}(0,T; \mathcal{M}^+(\overline{\Omega}; \mathbb{R}^{d\times d }_{\rm sym}))
			\end{equation*}
			such that the integral identity
			\begin{equation} \label{nsk weak formulation balance of momentum}
			\begin{aligned}
			\left[\int_{\Omega}\varrho \textup{\textbf{u}} \cdot \bm{\varphi}(t, \cdot)\ \textup{d}x\right]_{t=0}^{t=\tau}  &= \int_{0}^{\tau}\int_{\Omega} \left[ \varrho\textup{\textbf{u}} \cdot \partial_t\bm{\varphi}+(\varrho \textup{\textbf{u}} \otimes \textup{\textbf{u}}) : \nabla_x \bm{\varphi} +p(\varrho)\divv_x \bm{\varphi} \right] \textup{d}x\textup{d}t \\
			&+ \frac{\hbar}{4}  \int_{0}^{\tau}\int_{\Omega} \big[ \nabla_x \varrho\cdot  \divv_x \nabla_x^{\top}\bm{\varphi} +4 (\nabla_x \sqrt{\varrho} \otimes \nabla_x \sqrt{\varrho}): \nabla_x \bm{\varphi} \big] \ \textup{d}x \textup{d}t \\
			&-\int_{0}^{\tau}\int_{\Omega}\mathbb{S} (\nabla_x \textup{\textbf{u}}): \nabla_x \bm{\varphi} \ \textup{d}x\textup{d}t + \int_{0}^{\tau}  \int_{\overline{\Omega}} \nabla_x \bm{\varphi} : \textup{d} \mathfrak{R} \  \textup{d}t
			\end{aligned}
			\end{equation}
			holds for any $\tau >0$ and any $\bm{\varphi} \in C^1_c([0,\infty); C^2(\overline{\Omega}; \mathbb{R}^d)) , \ \bm{\varphi}|_{\partial \Omega}=0$;
			\item[(iv)] \textit{energy inequality:} there exists a constant $\lambda>0$ such that
			\begin{equation*}
			\int_{\Omega} \left[ \frac{1}{2} \varrho |\textbf{\textup{u}}|^2 +P(\varrho) + \frac{\hbar}{2}  |\nabla_x \sqrt{\varrho}|^2\right](\tau, \cdot) \ \textup{d}x + \frac{1}{\lambda}\int_{\overline{\Omega}} \textup{d} \trace[\mathfrak{R}](\tau)= E(\tau)
			\end{equation*}
			for a.e. $\tau>0$, and the energy inequality
			\begin{equation} \label{nsk energy inequality}
			\big[ E(t)\psi(t) \big]_{t=\tau_1^-}^{t=\tau_2^+} - \int_{\tau_1}^{\tau_2} E \psi' \ \textup{d}t + \int_{\tau_1}^{\tau_2} \psi \int_{\Omega} \mathbb{S}(\nabla_x \textbf{u}): \nabla_x \textbf{u} \ \textup{d}x \textup{d}t \leq 0
			\end{equation}
			holds for any $0\leq \tau_1 \leq \tau_2$ and any $\psi \in C^1_c ([0,\infty))$, $\psi \geq 0$.
		\end{itemize}
	\end{definition}
	
	\begin{definition}[Dissipative solution of the quantum Euler system] \label{ek dissipative solution}
		The pair of functions $[\varrho,\textbf{J}]$ with total energy $E$ constitutes a \textit{dissipative solution} to problem \eqref{ek continuity equation}--\eqref{ek balance of momentum} with the isentropic pressure \eqref{pressure}, the boundary conditions \eqref{ek boundary conditions} and the initial data
		\begin{equation*}
		[\varrho(0,\cdot), \textbf{J}(0,\cdot), E(0-)]=[\varrho_0, \textbf{J}_0, E_0] \in L^{\gamma}(\Omega)\times L^{\frac{2\gamma}{\gamma+1}}(\Omega; \mathbb{R}^d) \times [0,\infty)
		\end{equation*}
		if the following holds:
		\begin{itemize}
			\item[(i)]  \textit{regularity class}: $\varrho > 0$ in $(0,\infty) \times \Omega$ and
			\begin{align*}
			\varrho &\in C_{\rm weak,loc}([0,\infty);  L^{\gamma} \cap W^{1,\frac{2\gamma}{\gamma+1}} (\Omega)) \\[0.1cm]
			\textbf{J} &\in C_{\rm weak,loc}([0,\infty); L^q(\Omega;\mathbb{R}^d)), \quad q= \max \left\{ \frac{2\gamma}{\gamma+1},  \frac{4\gamma d}{(3d-2)\gamma+d}\right\}, \\[0.1cm]
			E &\in \mathfrak{D} ([0,\infty));
			\end{align*}
			\item[(ii)] \textit{weak formulation of the continuity equation}: the integral identity
			\begin{equation} \label{ek weak formulation continuity equation}
			\left[ \int_{\Omega} \varrho \varphi(t,\cdot) \ {\rm d}x \right]_{t=0}^{t=\tau}= \int_{0}^{\tau} \int_{\Omega} [\varrho \partial_t \varphi + \textbf{J}\cdot \nabla_x \varphi] \ {\rm d}x {\rm d}t
			\end{equation}
			holds for any $\tau >0$ and any $\varphi \in C^1_c([0,\infty)\times \overline{\Omega})$;
			\item[(iii)] \textit{weak formulation of the balance of momentum}: there exists
			\begin{equation*}
			\mathfrak{R} \in L^{\infty}(0,T; \mathcal{M}^+(\overline{\Omega}; \mathbb{R}^{d\times d }_{\rm sym}))
			\end{equation*}
			such that the integral identity
			\begin{equation} \label{ek weak formulation balance of momentum}
			\begin{aligned}
			\left[\int_{\Omega} \textup{\textbf{J}} \cdot \bm{\varphi}(t, \cdot)\ \textup{d}x\right]_{t=0}^{t=\tau}  &= \int_{0}^{\tau}\int_{\Omega} \left[ \textup{\textbf{J}} \cdot \partial_t\bm{\varphi}+ \frac{\textup{\textbf{J}} \otimes \textup{\textbf{J}}}{\varrho}  : \nabla_x \bm{\varphi} +p(\varrho)\divv_x \bm{\varphi} \right] \textup{d}x\textup{d}t \\
			&+ \frac{\hbar}{4}  \int_{0}^{\tau}\int_{\Omega} \big[ \nabla_x \varrho\cdot  \divv_x \nabla_x^{\top}\bm{\varphi} +4 (\nabla_x \sqrt{\varrho} \otimes \nabla_x \sqrt{\varrho}): \nabla_x \bm{\varphi} \big] \ \textup{d}x \textup{d}t   \\
			& + \int_{0}^{\tau}  \int_{\overline{\Omega}} \nabla_x \bm{\varphi} : \textup{d} \mathfrak{R} \  \textup{d}t
			\end{aligned}
			\end{equation}
			holds for any $\tau >0$ and any $\bm{\varphi} \in C^1_c([0,\infty); C^2(\overline{\Omega}; \mathbb{R}^d)) , \ \bm{\varphi}|_{\partial \Omega}=0,$;
			\item[(iv)] \textit{energy inequality:} there exists a constant $\lambda>0$ such that
			\begin{equation*}
			\int_{\Omega} \left[ \frac{1}{2}  \frac{|\textbf{\textup{J}}|^2}{\varrho}  +P(\varrho) + \frac{\hbar}{2}  |\nabla_x \sqrt{\varrho}|^2\right](\tau, \cdot) \ \textup{d}x + \frac{1}{\lambda}\int_{\overline{\Omega}} \textup{d} \trace[\mathfrak{R}](\tau)= E(\tau)
			\end{equation*}
			for a.e. $\tau>0$, and the energy inequality
			\begin{equation} \label{ek energy inequality}
			\big[ E(t)\psi(t) \big]_{t=\tau_1^-}^{t=\tau_2^+} - \int_{\tau_1}^{\tau_2} E \psi' \ \textup{d}t \leq 0
			\end{equation}
			holds for any $0\leq \tau_1 \leq \tau_2$ and any $\psi \in C^1_c ([0,\infty))$, $\psi \geq 0$.
		\end{itemize}
	\end{definition}

	\section{Weak--strong uniqueness}
	
	In this section, our goal is to prove the \textit{weak--strong uniqueness principle}: if the quantum Navier--Stokes (or quantum Euler) system admits a sufficiently regular classical solution, then it must coincide with the dissipative solution emanating from the same initial data. Hereafter, let
	\begin{align*}
		p_1&:= \min \left\{ \frac{\gamma}{\gamma-1}, \frac{2d\gamma}{(d+2)\gamma-d} \right\}, \\[0.1cm]
		p_2 &:= \min \left\{ \frac{2d\gamma}{(d+2)\gamma -2d}, \frac{2d\gamma}{4\gamma-d} \right\}, \\[0.1cm]
		p_3 &:= \min \left\{ \frac{d\gamma}{2\gamma-d}, \frac{2d\gamma}{(6-d)\gamma-d} \right\}.
	\end{align*}
	
	\begin{theorem}[Weak--strong uniqueness for the quantum Navier--Stokes system] \label{nsk weak-strong uniqueness}
		Let $[\widetilde{\varrho}, \widetilde{\textbf{\textup{u}}}]$ with $\widetilde{\varrho}>0$ and
		\begin{equation} \label{rc 1}
		\begin{aligned}
			\widetilde{\varrho} &\in L^{\infty}(0,\infty;  L^{\gamma} \cap  W^{1,\frac{2\gamma}{\gamma+1}} (\Omega)), \\[0.1cm]
			\widetilde{\textbf{\textup{u}}} &\in L^{\infty}(0,\infty; L^{2p_1}(\Omega; \mathbb{R}^d)) +L^2_{\textup{loc}}(0,\infty; W^{1,2}_0(\Omega; \mathbb{R}^d)),
		\end{aligned}
		\end{equation}\\[0.1cm]
		be a strong solution of system \eqref{nsk continuity equation}--\eqref{nsk balance of momentum}, satisfying the constitutive relations \eqref{pressure}--\eqref{viscosity} and the boundary conditions \eqref{nsk boundary condition}, where in addition the density $\widetilde{\varrho}$ is such that\\[0.1cm]
		\begin{equation} \label{rc 2}
			\begin{aligned}
				\partial_t P'(\widetilde{\varrho}) & \in L^1_{\rm loc}(0,\infty; L^{p_1}(\Omega)), \\[0.1cm]
				\nabla_x P'(\widetilde{\varrho}) & \in L^2_{\rm loc}(0,\infty; L^{p_2} (\Omega; \mathbb{R}^d))+ L^1_{\rm loc}(0,\infty; L^{2p_1}(\Omega; \mathbb{R}^d)),  \\[0.1cm]
				\partial_t \nabla_x \log \widetilde{\varrho} &\in L^1_{\rm loc}(0,\infty; L^{\frac{2\gamma}{\gamma-1}}(\Omega; \mathbb{R}^d)), \\[0.1cm]
				\nabla_x^ 2 \log \widetilde{\varrho} &\in L^2_{\rm loc}(0,\infty; L^{\frac{2d\gamma}{2\gamma-d}}(\Omega; \mathbb{R}^{d\times d}))+ L^1_{\rm loc}(0,\infty; L^{\infty}(\Omega; \mathbb{R}^{d\times d})),
			\end{aligned}
		\end{equation}\\[0.1cm]
		the velocity $\widetilde{\textbf{\textup{u}}}$ is such that
		\begin{equation} \label{rc 3}
			\begin{aligned}
				\partial_t \widetilde{\textbf{\textup{u}}} & \in L^2_{\rm loc}(0,\infty; L^{p_2 }(\Omega; \mathbb{R}^d))+ L^1_{\rm loc}(0,\infty; L^{2p_1}(\Omega; \mathbb{R}^d)),  \\[0.1cm]
				\nabla_x \widetilde{\textbf{\textup{u}}} & \in L^{\infty}(0,\infty; L^{p_3}(\Omega; \mathbb{R}^{d\times d}))+ L^2_{\rm loc}(0,\infty; L^{2p_3}(\Omega; \mathbb{R}^{d\times d}))\\
				&+ L^1_{\rm loc}(0,\infty; L^{\infty}(\Omega; \mathbb{R}^{d\times d})),  \\[0.1cm]
				\divv_x \widetilde{\textbf{\textup{u}}} &\in L^1_{\rm loc} (0,\infty; L^{\infty}(\Omega)),  \\[0.1cm]
				\divv_x \nabla_x^{\top} \widetilde{\textbf{\textup{u}}} &\in L^1_{\rm loc}(0,\infty; L^{\frac{2\gamma}{\gamma-1}}(\Omega; \mathbb{R}^d)),  \\[0.1cm]
			\end{aligned}
		\end{equation}
		and
		\begin{equation*}
			\frac{\mathbb{S}(\nabla_x \widetilde{\textbf{\textup{u}}})}{\widetilde{\varrho}} \in L^2_{\rm loc}(0,\infty; L^{\frac{2d\gamma}{2\gamma-d}}(\Omega; \mathbb{R}^{d\times d})).
		\end{equation*}
		
		Let $[\varrho, \textbf{\textup{u}}]$ be a dissipative solution of the same system with dissipation defect $\mathfrak{R}$ in the sense of Definition \ref{nsk dissipative solution}. If
		\begin{equation} \label{nsk same initial conditions}
		[\widetilde{\varrho}(0,x), (\widetilde{\varrho}\widetilde{\textbf{\textup{u}}})(0,x) ] = [\varrho(0,x), (\varrho\textbf{\textup{u}})(0,x) ]  \quad \mbox{for a.e. } x\in \Omega
		\end{equation}
		then $\mathfrak{R} \equiv 0$ and
		\begin{equation} \label{nsk same solutions}
		[\widetilde{\varrho}(t,x), \widetilde{\textbf{\textup{u}}}(t,x) ] = [\varrho(t,x), \textbf{\textup{u}}(t,x) ]  \quad \mbox{for a.e. } (t,x)\in (0,\infty) \times  \Omega.
		\end{equation}
	\end{theorem}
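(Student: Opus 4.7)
The strategy is the \emph{relative energy} (or modulated energy) method, tailored to the quantum/Korteweg correction and to the Reynolds defect $\mathfrak{R}$. I would introduce the functional
\begin{equation*}
\mathcal{E}(\varrho,\textbf{\textup{u}}\,|\,\widetilde{\varrho},\widetilde{\textbf{\textup{u}}}) := \int_{\Omega} \left[\tfrac{1}{2}\varrho|\textbf{\textup{u}}-\widetilde{\textbf{\textup{u}}}|^2 + P(\varrho)-P'(\widetilde{\varrho})(\varrho-\widetilde{\varrho})-P(\widetilde{\varrho}) + \tfrac{\hbar}{2}\bigl|\nabla_x\sqrt{\varrho}-\nabla_x\sqrt{\widetilde{\varrho}}\bigr|^2\right]\textup{d}x
\end{equation*}
and its dissipative augmentation $\mathcal{E}_{\rm tot}:=\mathcal{E} + \frac{1}{\lambda}\int_{\overline{\Omega}}\textup{d}\,\trace[\mathfrak{R}]$. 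Because $P$ is strictly convex (since $\gamma>d/2>1$), the integrand of $\mathcal{E}$ is pointwise nonnegative and, together with the strict positivity of $\varrho$ away from vacuum, controls $(\varrho-\widetilde{\varrho})^2$, $\varrho|\textbf{\textup{u}}-\widetilde{\textbf{\textup{u}}}|^2$, and $|\nabla_x\sqrt\varrho-\nabla_x\sqrt{\widetilde\varrho}|^2$. The assertions \eqref{nsk same solutions} and $\mathfrak{R}\equiv 0$ both reduce to showing $\mathcal{E}_{\rm tot}\equiv 0$.

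To obtain an inequality for $\mathcal{E}_{\rm tot}(\tau)$ I would combine four ingredients: first, evaluate the energy inequality \eqref{nsk energy inequality} between $\tau_1=0^-$ and $\tau_2=\tau$ by approximating $\mathbbm{1}_{[0,\tau]}$ by functions $\psi\in C^1_c([0,\infty))$, which is legal for a.e.\ $\tau$ since $E\in\mathfrak{D}$ has at most countably many jumps; second, test the continuity equation \eqref{nsk weak formulation continuity equation} against $\varphi=\tfrac{1}{2}|\widetilde{\textbf{\textup{u}}}|^2-P'(\widetilde{\varrho})$; third, test the momentum equation \eqref{nsk weak formulation balance of momentum} against $\bm\varphi=\widetilde{\textbf{\textup{u}}}$ (legitimate after standard mollification since $\widetilde{\textbf{\textup{u}}}|_{\partial\Omega}=0$ and $\divv_x\nabla_x^\top\widetilde{\textbf{\textup{u}}}\in L^1_{\rm loc}(L^{2\gamma/(\gamma-1)})$ by \eqref{rc 3} absorbs the Bohm pairing against $\nabla_x\varrho\in L^\infty(L^{2\gamma/(\gamma+1)})$); fourth, subtract the classical identities satisfied by $(\widetilde{\varrho},\widetilde{\textbf{\textup{u}}})$. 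Using the algebraic identity
\begin{equation*}
\mathbb{S}(\nabla_x\textbf{\textup{u}}):\nabla_x\textbf{\textup{u}}-2\mathbb{S}(\nabla_x\widetilde{\textbf{\textup{u}}}):\nabla_x\textbf{\textup{u}}+\mathbb{S}(\nabla_x\widetilde{\textbf{\textup{u}}}):\nabla_x\widetilde{\textbf{\textup{u}}} = \mathbb{S}(\nabla_x(\textbf{\textup{u}}-\widetilde{\textbf{\textup{u}}})):\nabla_x(\textbf{\textup{u}}-\widetilde{\textbf{\textup{u}}}),
\end{equation*}
and rewriting the Bohm terms by expanding $\nabla_x\sqrt\varrho = \nabla_x\sqrt{\widetilde\varrho}+(\nabla_x\sqrt\varrho-\nabla_x\sqrt{\widetilde\varrho})$, one should obtain a relative energy inequality of the form
\begin{equation*}
\mathcal{E}_{\rm tot}(\tau) + \int_0^\tau\!\!\int_\Omega \mathbb{S}\bigl(\nabla_x(\textbf{\textup{u}}-\widetilde{\textbf{\textup{u}}})\bigr):\nabla_x(\textbf{\textup{u}}-\widetilde{\textbf{\textup{u}}})\,\textup{d}x\textup{d}t \leq \int_0^\tau h(t)\,\mathcal{E}_{\rm tot}(t)\,\textup{d}t
\end{equation*}
for some $h\in L^1_{\rm loc}(0,\infty)$ built out of the strong solution. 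A Gronwall argument, combined with $\mathcal{E}_{\rm tot}(0)=0$ guaranteed by \eqref{nsk same initial conditions} and the nonnegativity of $\trace[\mathfrak{R}]$, yields $\mathcal{E}_{\rm tot}\equiv 0$, whence $\mathfrak{R}\equiv 0$ and, by strict convexity of $P$ and positivity of $\varrho,\widetilde\varrho$, the pointwise identification \eqref{nsk same solutions}.

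The main difficulty is not structural but quantitative: one must verify that every cross term produced by the four testings above is controlled by $\mathcal{E}$ times an $L^1_{\rm loc}(0,\infty)$ time factor. The Lebesgue exponents $p_1,p_2,p_3$ and the regularity hypotheses \eqref{rc 1}--\eqref{rc 3} are calibrated exactly for this purpose, combining H\"older interpolation with the $C_{\rm weak,loc}([0,\infty);L^\gamma)\cap L^\infty(W^{1,2\gamma/(\gamma+1)})$ control on $\varrho$ and the $L^2_{\rm loc}(W^{1,2}_0)$ control on $\textbf{\textup{u}}$ from Definition \ref{nsk dissipative solution}; the condition $\mathbb{S}(\nabla_x\widetilde{\textbf{\textup{u}}})/\widetilde\varrho\in L^2_{\rm loc}(L^{2d\gamma/(2\gamma-d)})$ is specifically what is needed to dominate the residual viscous cross terms. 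Verifying each such estimate case by case, especially for the mixed Bohm--convective contributions that have no counterpart in the classical barotropic Navier--Stokes relative energy analysis, is the technical heart of the argument.
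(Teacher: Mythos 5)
Your strategy---relative energy plus Gronwall, using the extended energy inequality and the weak formulations tested against functions built from the strong solution---is the same as the paper's. But the choice of the quantum term in your relative energy is wrong, and this is not cosmetic. You propose
\begin{equation*}
\tfrac{\hbar}{2}\bigl|\nabla_x\sqrt{\varrho}-\nabla_x\sqrt{\widetilde{\varrho}}\bigr|^2,
\end{equation*}
while the paper works with the \emph{modulated} quantity
\begin{equation*}
\tfrac{\hbar}{2}\Bigl|\nabla_x\sqrt{\varrho}-\sqrt{\varrho/\widetilde{\varrho}}\,\nabla_x\sqrt{\widetilde{\varrho}}\Bigr|^2=\tfrac{\hbar}{2}\,\varrho\,|\textbf{v}-\widetilde{\textbf{v}}|^2,
\qquad
\textbf{v}=\frac{\nabla_x\sqrt{\varrho}}{\sqrt{\varrho}},\ \ \widetilde{\textbf{v}}=\frac{\nabla_x\sqrt{\widetilde{\varrho}}}{\sqrt{\widetilde{\varrho}}}.
\end{equation*}
The distinction matters because of how the cross term is produced. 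Expanding your version gives the cross term $\hbar\,\nabla_x\sqrt{\varrho}\cdot\nabla_x\sqrt{\widetilde{\varrho}}=\hbar\,\sqrt{\varrho\widetilde{\varrho}}\,\textbf{v}\cdot\widetilde{\textbf{v}}$, whose coefficient $\sqrt{\varrho\widetilde{\varrho}}$ is bilinear in the two unknowns and cannot be generated from the weak continuity equation \eqref{nsk weak formulation continuity equation}, which only produces terms of the form $\varrho\,\varphi$ and $\varrho\textbf{u}\cdot\nabla_x\varphi$ for a \emph{fixed} test function $\varphi$ depending on $\widetilde{\varrho}$. In contrast, the modulated cross term $\hbar\,\varrho\,\textbf{v}\cdot\widetilde{\textbf{v}}$ arises exactly (after integrating by parts and using $\nabla_x\varrho=2\varrho\textbf{v}$) from testing \eqref{nsk weak formulation continuity equation} against $\varphi=\hbar\,\divv_x\widetilde{\textbf{v}}$; the remaining piece $\frac{\hbar}{2}\varrho|\widetilde{\textbf{v}}|^2$ comes from $\varphi=\frac{\hbar}{2}|\widetilde{\textbf{v}}|^2$. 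Your list of test functions ($\frac12|\widetilde{\textbf{u}}|^2-P'(\widetilde{\varrho})$ for continuity, $\widetilde{\textbf{u}}$ for momentum) omits precisely these two, so the Bohm contributions to the relative energy inequality would not cancel into quadratic ``error'' terms $\varrho(\textbf{v}-\widetilde{\textbf{v}})\otimes(\textbf{v}-\widetilde{\textbf{v}}):\nabla_x\widetilde{\textbf{u}}$ and the Gronwall closure would fail. The fix is to take the relative energy in terms of the drift velocity difference (equivalently $\nabla_x\log\varrho-\nabla_x\log\widetilde{\varrho}$), observe that the strong solution $\widetilde{\textbf{v}}$ obeys an extra transport-type equation obtained by taking the gradient of the strong continuity equation, and use that equation to absorb $\partial_t\widetilde{\textbf{v}}+\nabla_x\widetilde{\textbf{v}}\cdot\widetilde{\textbf{u}}$; this is also why \eqref{rc 2} carries assumptions on $\partial_t\nabla_x\log\widetilde{\varrho}$ and $\nabla_x^2\log\widetilde{\varrho}$ rather than on derivatives of $\sqrt{\widetilde{\varrho}}$.
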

	
	\begin{theorem}[Weak--strong uniqueness for the quantum Euler system] \label{ek weak-strong uniqueness}
		Let $[\widetilde{\varrho}, \widetilde{\textbf{\textup{u}}}]$ with
		\begin{equation}
		\begin{aligned}
		\widetilde{\varrho} &\in L^{\infty}(0,\infty;  L^{\gamma} \cap  W^{1,\frac{2\gamma}{\gamma+1}} (\Omega)), \\[0.1cm]
		\widetilde{\textbf{\textup{u}}} &\in L^{\infty}(0,\infty; L^{2p_1}(\Omega; \mathbb{R}^d)),
		\end{aligned}
		\end{equation}
		be a strong solution of system \eqref{ek continuity equation}--\eqref{ek balance of momentum} satisfying the constitutive relation \eqref{pressure}, where in addition the density $\widetilde{\varrho}>0$ and the velocity $\widetilde{\textbf{\textup{u}}}$ are such that $\widetilde{\textbf{\textup{u}}}\cdot \textbf{\textup{n}}|_{\partial \Omega}=0$ and\\[0.1cm]
		\begin{equation}
		\begin{aligned}
		\partial_t P'(\widetilde{\varrho}) & \in L^1_{\rm loc}(0,\infty; L^{p_1}(\Omega)), \\[0.1cm]
		\divv_x \widetilde{\textbf{\textup{u}}} &\in L^1_{\rm loc} (0,\infty; L^{\infty}(\Omega)),  \\[0.1cm]
		\nabla_x P'(\widetilde{\varrho}), \ \nabla_x \Delta_x \log \widetilde{\varrho}, \ \partial_t \widetilde{\textbf{\textup{u}}} & \in L^1_{\rm loc}(0,\infty; L^{2p_1}(\Omega; \mathbb{R}^d)),  \\[0.1cm]
		\partial_t \nabla_x \log \widetilde{\varrho}, \ 	\divv_x \nabla_x^{\top} \widetilde{\textbf{\textup{u}}} &\in L^1_{\rm loc}(0,\infty; L^{\frac{2\gamma}{\gamma-1}}(\Omega; \mathbb{R}^d)), \\[0.1cm]
		\nabla_x^ 2 \log \widetilde{\varrho}, \ \nabla_x \widetilde{\textbf{\textup{u}}}  &\in  L^1_{\rm loc}(0,\infty; L^{\infty}(\Omega; \mathbb{R}^{d\times d})).
		\end{aligned}
		\end{equation}\\[0.1cm]
		Let $[\varrho, \textbf{\textup{J}}]$ be a dissipative solution of the same system with dissipation defect $\mathfrak{R}$ in the sense of Definition \ref{ek dissipative solution}. If
		\begin{equation*}
		[\widetilde{\varrho}(0,x), (\widetilde{\varrho}\widetilde{\textbf{\textup{u}}})(0,x) ] = [\varrho(0,x), \textbf{\textup{J}}(0,x) ]  \quad \mbox{for a.e. } x\in \Omega
		\end{equation*}
		then $\mathfrak{R} \equiv 0$ and
		\begin{equation*}
		[\widetilde{\varrho}(t,x), (\widetilde{\varrho}\widetilde{\textbf{\textup{u}}})(t,x) ] = [\varrho(t,x), \textbf{\textup{J}}(t,x) ]  \quad \mbox{for a.e. } (t,x)\in (0,\infty) \times  \Omega.
		\end{equation*}
	\end{theorem}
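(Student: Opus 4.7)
The proof follows the relative energy strategy, adapting to the inviscid setting the argument already used to establish Theorem \ref{nsk weak-strong uniqueness}. The plan is to introduce the relative energy functional
\begin{equation*}
\mathcal{E}_{\rm rel}(\tau) = \int_{\Omega} \left[\frac{1}{2}\frac{|\textbf{J} - \varrho\widetilde{\textbf{u}}|^2}{\varrho} + P(\varrho) - P'(\widetilde{\varrho})(\varrho - \widetilde{\varrho}) - P(\widetilde{\varrho}) + \frac{\hbar}{2}\bigl|\nabla_x\sqrt{\varrho} - \nabla_x\sqrt{\widetilde{\varrho}}\bigr|^2\right]\!(\tau, \cdot)\, \textup{d}x + \frac{1}{\lambda}\int_{\overline{\Omega}} \textup{d}\,\trace[\mathfrak{R}](\tau),
\end{equation*}
and to show that it satisfies a Gronwall-type differential inequality $\mathcal{E}_{\rm rel}(\tau) \leq \mathcal{E}_{\rm rel}(0) + \int_0^{\tau} C(t)\, \mathcal{E}_{\rm rel}(t)\, \textup{d}t$ with $C \in L^1_{\rm loc}(0,\infty)$. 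Once this bound is in place, the equality of the initial data and Gronwall's lemma immediately force $\mathcal{E}_{\rm rel} \equiv 0$, whence $\mathfrak{R} \equiv 0$ and, by strict convexity of $P$ together with the positivity of $\varrho$, the two solutions coincide.

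The computation proceeds in the standard way. First I would invoke the energy inequality \eqref{ek energy inequality} (with $\psi$ approximating $\mathbf{1}_{(0,\tau)}$), subtract the weak balance of momentum \eqref{ek weak formulation balance of momentum} tested against $\bm{\varphi} = \widetilde{\textbf{u}}$ (admissible by density thanks to the regularity hypotheses and the boundary condition $\widetilde{\textbf{u}} \cdot \textbf{n}|_{\partial\Omega} = 0$), subtract the weak continuity equation \eqref{ek weak formulation continuity equation} tested against $\varphi = \tfrac{1}{2}|\widetilde{\textbf{u}}|^2 - P'(\widetilde{\varrho})$, and finally add the integrated total energy balance for the strong solution $(\widetilde{\varrho}, \widetilde{\textbf{u}})$. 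Since $(\widetilde{\varrho}, \widetilde{\textbf{u}})$ satisfies \eqref{ek continuity equation}--\eqref{ek balance of momentum} pointwise, the time derivatives $\partial_t \widetilde{\textbf{u}}$ and $\partial_t P'(\widetilde{\varrho})$ produced by the two preceding steps can be converted into spatial quantities. What remains after the principal cancellations is a residue quadratic in the differences $\textbf{J}/\sqrt{\varrho} - \sqrt{\varrho}\,\widetilde{\textbf{u}}$, $\varrho - \widetilde{\varrho}$ and $\nabla_x\sqrt{\varrho} - \nabla_x\sqrt{\widetilde{\varrho}}$, together with a term of the form $\int \nabla_x \widetilde{\textbf{u}} : \textup{d}\mathfrak{R}$, which is controlled by $\|\nabla_x \widetilde{\textbf{u}}\|_{L^{\infty}} \int \textup{d}\,\trace[\mathfrak{R}]$. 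Each factor arising from Hölder's inequality lies in one of the Lebesgue spaces prescribed by the hypotheses, and the customary essential/residual splitting of the pressure residue (quadratic control $|\varrho - \widetilde{\varrho}|^2$ on the essential set and $\varrho^{\gamma}$-coercivity on the residual one) closes the non-capillarity part of the estimate.

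The main obstacle is the algebra of the Bohm/capillarity contribution $\tfrac{\hbar}{2}|\nabla_x\sqrt{\varrho} - \nabla_x\sqrt{\widetilde{\varrho}}|^2$. Expanding the square produces a cross-term $-\hbar \int \nabla_x\sqrt{\varrho} \cdot \nabla_x\sqrt{\widetilde{\varrho}}\, \textup{d}x$ whose time derivative is not directly generated by the identities above; moreover, $\sqrt{\varrho}$ is not a smooth function of $\varrho$, so one cannot differentiate it naively along the continuity equation. The remedy, standard in the Bresch--Desjardins framework, is to rewrite $\nabla_x\sqrt{\widetilde{\varrho}} = \tfrac{1}{2}\sqrt{\widetilde{\varrho}}\, \nabla_x \log \widetilde{\varrho}$, so that the cross-term becomes a linear functional of $\varrho$ tested against spatial derivatives of $\log \widetilde{\varrho}$; after integrations by parts (using $\nabla_x \varrho \cdot \textbf{n}|_{\partial\Omega} = 0$ and $\widetilde{\textbf{u}} \cdot \textbf{n}|_{\partial\Omega} = 0$), time differentiation produces only combinations of $\partial_t \nabla_x \log \widetilde{\varrho}$, $\nabla_x \Delta_x \log \widetilde{\varrho}$ and $\nabla_x^2 \log \widetilde{\varrho}$ with $\widetilde{\textbf{u}}$ and $\divv_x \widetilde{\textbf{u}}$. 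This is precisely why these quantities appear in the hypotheses of the theorem in the specific $L^1_{\rm loc} L^{p}_x$ norms stated: each is exactly what Hölder's inequality requires in order to absorb the corresponding contribution into $\int_0^{\tau} C(t)\, \mathcal{E}_{\rm rel}(t)\, \textup{d}t$. Once the capillarity piece is closed in this way, Gronwall's lemma delivers the conclusion.
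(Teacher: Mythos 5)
You have the right overall strategy (relative energy plus Gronwall, mirroring the proof of Theorem \ref{nsk weak-strong uniqueness}), but the relative energy functional you wrote down is not the one that closes, and the proposed remedy does not repair it.

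The quantum/capillarity part of your functional is
\begin{equation*}
\frac{\hbar}{2}\bigl|\nabla_x\sqrt{\varrho}-\nabla_x\sqrt{\widetilde{\varrho}}\bigr|^2
= \frac{\hbar}{2}|\nabla_x\sqrt{\varrho}|^2 - \hbar\,\nabla_x\sqrt{\varrho}\cdot\nabla_x\sqrt{\widetilde{\varrho}} + \frac{\hbar}{2}|\nabla_x\sqrt{\widetilde{\varrho}}|^2.
\end{equation*}
The cross term $-\hbar\int\nabla_x\sqrt{\varrho}\cdot\nabla_x\sqrt{\widetilde{\varrho}}\,\mathrm{d}x$ is a functional of $\sqrt{\varrho}$, not of $\varrho$: after integration by parts it becomes $\hbar\int\sqrt{\varrho}\,\Delta_x\sqrt{\widetilde{\varrho}}\,\mathrm{d}x$, which is a \emph{nonlinear} (concave) function of $\varrho$. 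The weak formulation \eqref{ek weak formulation continuity equation} only controls the time evolution of $\int\varrho\,\varphi\,\mathrm{d}x$, i.e.\ of functionals that are affine in $\varrho$. Your rewriting $\nabla_x\sqrt{\widetilde{\varrho}}=\tfrac{1}{2}\sqrt{\widetilde{\varrho}}\,\nabla_x\log\widetilde{\varrho}$ does not help: it relocates where the $\log$ sits on the strong-solution side but leaves the problematic $\nabla_x\sqrt{\varrho}$ factor in place, so the cross term is still nonlinear in the state variable of the dissipative solution and cannot be evolved from the weak formulations. Your claim that it ``becomes a linear functional of $\varrho$'' is therefore incorrect.

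The fix the paper actually uses is to take the quantum part of the relative energy to be
\begin{equation*}
\frac{\hbar}{2}\,\varrho\,|\textbf{v}-\widetilde{\textbf{v}}|^2
= \frac{\hbar}{2}\Bigl|\nabla_x\sqrt{\varrho} - \sqrt{\tfrac{\varrho}{\widetilde{\varrho}}}\,\nabla_x\sqrt{\widetilde{\varrho}}\Bigr|^2,
\quad
\textbf{v}=\frac{\nabla_x\sqrt{\varrho}}{\sqrt{\varrho}},\ \widetilde{\textbf{v}}=\frac{\nabla_x\sqrt{\widetilde{\varrho}}}{\sqrt{\widetilde{\varrho}}}.
\end{equation*}
Expanding, the term $\frac{\hbar}{2}\varrho|\widetilde{\textbf{v}}|^2$ is linear in $\varrho$ and the cross term satisfies $\hbar\,\varrho\,\textbf{v}\cdot\widetilde{\textbf{v}} = \tfrac{\hbar}{2}\nabla_x\varrho\cdot\widetilde{\textbf{v}}$, which, after integration by parts with $\nabla_x\widetilde{\varrho}\cdot\textbf{n}|_{\partial\Omega}=0$, becomes $-\tfrac{\hbar}{2}\int\varrho\,\divv_x\widetilde{\textbf{v}}\,\mathrm{d}x$, again linear in $\varrho$. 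This is precisely why the paper can test the weak continuity equation with $\varphi=\tfrac{\hbar}{2}|\widetilde{\textbf{v}}|^2$ and $\varphi=\hbar\,\divv_x\widetilde{\textbf{v}}$ and obtain an exact identity for the time evolution of the whole quantum piece. The regularity hypotheses on $\partial_t\nabla_x\log\widetilde{\varrho}$, $\nabla_x^2\log\widetilde{\varrho}$, $\nabla_x\Delta_x\log\widetilde{\varrho}$ that you noticed in the statement are then consumed, via H\"older, in bounding these linear-in-$\varrho$ test integrals, not in salvaging a nonlinear cross term. With your choice of relative energy there is no way to generate the needed time-derivative identity, so the Gronwall inequality does not close. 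Replace the quantum term by $\frac{\hbar}{2}\varrho|\textbf{v}-\widetilde{\textbf{v}}|^2$ and the rest of your outline goes through as in the paper.
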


	The proofs are based on showing that a slightly modified version of the energy, known as relative energy, and the Reynolds stress vanish almost everywhere.
	
	\subsection{Proof of Theorem \ref{nsk weak-strong uniqueness}}
	
	We introduce the \textit{relative energy functional}:
	\begin{equation*}
		E (\varrho, \nabla_x \varrho, \textbf{u} \ | \ \widetilde{\varrho}, \nabla_x \widetilde{\varrho}, \widetilde{\textbf{\textup{u}}})=\frac{1}{2} \varrho |\textbf{u}-\widetilde{\textbf{\textup{u}}}|^2  + P(\varrho) -P'(\widetilde{\varrho}) (\varrho-\widetilde{\varrho}) -P(\widetilde{\varrho}) + \frac{\hbar}{2}  \left| \nabla_x \sqrt{\varrho}- \sqrt{\frac{\varrho}{\widetilde{\varrho}}} \ \nabla_x\sqrt{\widetilde{\varrho}}\right|^2.
	\end{equation*}
	To simplify notation, we introduce the \textit{drift velocities}
	\begin{equation} \label{drift velocity}
		\textbf{v}= \frac{\nabla_x \sqrt{\varrho}}{\sqrt{\varrho}}, \quad \widetilde{\textbf{v}} = \frac{ \nabla_x\sqrt{\widetilde{\varrho}}}{\sqrt{\widetilde{\varrho}}},
	\end{equation}
	and therefore the relative energy functional can be rewritten as
		\begin{equation*}
	E (\varrho, \textbf{u}, \textbf{v} \ | \ \widetilde{\varrho}, \widetilde{\textbf{\textup{u}}}, \widetilde{\textbf{v}})=\frac{1}{2} \varrho |\textbf{u}-\widetilde{\textbf{\textup{u}}}|^2  + P(\varrho) -P'(\widetilde{\varrho}) (\varrho-\widetilde{\varrho}) -P(\widetilde{\varrho}) + \frac{\hbar}{2} \varrho \left|\textbf{v}-\widetilde{\textbf{v}}\right|^2.
	\end{equation*}\\[0.001cm]

	\textbf{Step 1.} First of all, proving Theorem \ref{nsk weak-strong uniqueness} is equivalent in showing that
	\begin{equation} \label{equivalent condition}
	\mathfrak{R} \equiv 0, \quad E(\varrho, \textbf{u}, \textbf{v} \ | \ \widetilde{\varrho}, \widetilde{\textbf{\textup{u}}}, \widetilde{\textbf{v}}) \equiv 0 \quad \mbox{a.e. in } (0,\infty) \times \Omega.
	\end{equation}
	Indeed, since the pressure $\varrho \mapsto p(\varrho)$ is strictly increasing in $(0,\infty)$, the pressure potential $\varrho \mapsto P(\varrho)$ is strictly convex. For a differentiable function, this is equivalent in saying that the function lies above all of its tangents,
	\begin{equation} \label{relation pressure potential}
	P(\varrho) \geq P'(\widetilde{\varrho})  (\varrho-\widetilde{\varrho})+ P(\widetilde{\varrho})
	\end{equation}
	for all $\varrho, \widetilde{\varrho} \in (0,\infty)$. Therefore, we can deduce that
	\begin{equation} \label{non-negativity relative energy}
	E(\varrho, \textbf{u}, \textbf{v} \ | \ \widetilde{\varrho}, \widetilde{\textbf{\textup{u}}}, \widetilde{\textbf{v}}) \geq 0.
	\end{equation}
	Moreover, the equality in \eqref{relation pressure potential} holds if and only if $\varrho= \widetilde{\varrho}$ and consequently the equality in \eqref{non-negativity relative energy} holds if and only if \eqref{nsk same solutions} holds. \\[0.05cm]
	
	\textbf{Step 2.} We will now show that any dissipative solution satisfies an extended version of the energy inequality, whenever $[\widetilde{\varrho}, \widetilde{\textbf{\textup{u}}}]$ are smooth and compactly supported functions. Let us suppose that
	\begin{align*}
	\widetilde{\varrho} &\in C_c^{\infty} ([0,\infty)\times \overline{\Omega}), \\
	\widetilde{\textbf{u}} &\in C_c^{\infty} ([0,\infty)\times \overline{\Omega}; \mathbb{R}^d);
	\end{align*}
	then, we can take $\varphi= \frac{1}{2} |\widetilde{\textbf{u}}|^2, \ P'(\widetilde{\varrho}), \ \frac{\hbar}{2} |\widetilde{\textbf{v}}|^2, \ \hbar \divv_x \widetilde{\textbf{v}}$ as test functions in the weak formulation of the continuity equation \eqref{nsk weak formulation continuity equation} to get
	\begin{align}
	\frac{1}{2}\left[ \int_{\Omega} \varrho |\widetilde{\textbf{u}}|^2(t,\cdot) \ {\rm d}x \right]_{t=0}^{t=\tau}&= \int_{0}^{\tau} \int_{\Omega} \varrho \widetilde{\textbf{u}} \cdot \big( \partial_t \widetilde{\textbf{u}} + \nabla_x \widetilde{\textbf{u}} \cdot \textbf{u} \big)   \ {\rm d}x {\rm d}t, \label{e1}\\[0.1cm]
	\left[ \int_{\Omega} \varrho P'(\widetilde{\varrho})(t,\cdot) \ {\rm d}x \right]_{t=0}^{t=\tau}&= \int_{0}^{\tau} \int_{\Omega} [\varrho \partial_t P'(\widetilde{\varrho}) + \varrho\textbf{u}\cdot \nabla_x P'(\widetilde{\varrho})] \ {\rm d}x {\rm d}t, \label{e2}\\[0.1cm]
	\frac{\hbar}{2}\left[ \int_{\Omega} \varrho  |\widetilde{\textbf{v}}|^2 (t,\cdot) \ {\rm d}x \right]_{t=0}^{t=\tau}& = \hbar \int_{0}^{\tau} \int_{\Omega} \varrho \widetilde{\textbf{v}} \cdot \big( \partial_t \widetilde{\textbf{v}} + \nabla_x \widetilde{\textbf{v}} \cdot \textbf{u}  \big) \ {\rm d}x {\rm d}t, \label{e3} \\
	\hbar \left[ \int_{\Omega}  \varrho \textbf{v} \cdot \widetilde{\textbf{v}} (t,\cdot) \ {\rm d}x \right]_{t=0}^{t=\tau} &= \hbar \int_{0}^{\tau} \int_{\Omega} \big[\varrho \textbf{v} \cdot \big( \partial_t \widetilde{\textbf{v}} + \nabla_x \widetilde{\textbf{v}} \cdot \textbf{u}  \big) + \varrho \nabla_x \textbf{u} : \nabla_x \widetilde{\textbf{v}}\big]\  {\rm d}x {\rm d}t, \label{e4}
	\end{align}
	where we recall identity \eqref{identity}, and $\bm{\varphi}= \widetilde{\textbf{u}}$ as test function in the weak formulation of the balance of momentum \eqref{nsk weak formulation balance of momentum} to get
	\begin{equation} \label{e5}
	\begin{aligned}
	\left[\int_{\Omega}\varrho \textup{\textbf{u}} \cdot \widetilde{\textbf{u}}(t, \cdot)\ \textup{d}x\right]_{t=0}^{t=\tau}  &= \int_{0}^{\tau}\int_{\Omega} \left[ \varrho\textup{\textbf{u}} \cdot \big( \partial_t \widetilde{\textbf{u}} + \nabla_x \widetilde{\textbf{u}} \cdot \textbf{u} \big) +p(\varrho)\divv_x \widetilde{\textbf{u}}\right] \textup{d}x\textup{d}t \\
	&+ \hbar \int_{0}^{\tau}\int_{\Omega} \left[ \frac{1}{2}\varrho \textbf{v} \cdot   \divv_x \nabla_x^{\top} \widetilde{\textbf{u}} + \varrho \textbf{v} \cdot \nabla_x \widetilde{\textbf{u}} \cdot \textbf{v}\right] \textup{d}x \textup{d}t  \\
	&-\int_{0}^{\tau}\int_{\Omega}\mathbb{S} (\nabla_x \textup{\textbf{u}}): \nabla_x \widetilde{\textbf{u}} \ \textup{d}x\textup{d}t + \int_{0}^{\tau}  \int_{\overline{\Omega}} \nabla_x \widetilde{\textbf{u}} : \textup{d} \mathfrak{R} \  \textup{d}t.
	\end{aligned}
	\end{equation}
	Next, if we sum the integral identities \eqref{e1}, \eqref{e3} and subtract \eqref{e2}, \eqref{e4}, \eqref{e5} from the energy inequality \eqref{nsk energy inequality}, keeping in mind that
	\begin{equation*}
	\left[ \int_{\Omega} \big[ \widetilde{\varrho} P'(\widetilde{\varrho})- P(\widetilde{\varrho})\big] (t,\cdot) \ \textup{d}x\right]_{t=0}^{t=\tau} = \int_{0}^{\tau} \int_{\Omega} \frac{\partial}{\partial t} \big[ \widetilde{\varrho} P'(\widetilde{\varrho})- P(\widetilde{\varrho})\big] \ \textup{d}x \textup{d}t = \int_{0}^{\tau} \int_{\Omega} \widetilde{\varrho} P''(\widetilde{\varrho}) \partial_t \widetilde{\varrho} \ \textup{d}x \textup{d}t,
	\end{equation*}
	we obtain
	\begin{align*}
	&\left[ \int_{\Omega} E(\varrho, \textup{\textbf{u}}, \textbf{v} \ | \ \widetilde{\varrho}, \widetilde{\textup{\textbf{u}}}, \widetilde{\textbf{v}}) (t, \cdot) \  \textup{d}x\right]_{t=0}^{t=\tau} + \frac{1}{\lambda}\int_{\overline{\Omega}} \textup{d} \trace[\mathfrak{R}](\tau) + \int_{0}^{\tau} \int_{\Omega} \mathbb{S}(\nabla_x \textup{\textbf{u}}):\nabla_x (\textup{\textbf{u}}-\widetilde{\textup{\textbf{u}}}) \ \textup{d}x \textup{d}t \\
	&\hspace{1.5cm}\leq - \int_{0}^{\tau} \int_{\Omega}  \varrho(\textup{\textbf{u}}-\widetilde{\textup{\textbf{u}}})  \cdot[\partial_t \widetilde{\textup{\textbf{u}}} + \nabla_x \widetilde{\textup{\textbf{u}}} \cdot \widetilde{\textup{\textbf{u}}} + \nabla_x \widetilde{\textup{\textbf{u}}} \cdot ( \textup{\textbf{u}}-\widetilde{\textup{\textbf{u}}})] \ \textup{d}x \textup{d}t \\
	&\hspace{1.5cm}- \int_{0}^{\tau} \int_{\Omega} p(\varrho) \divv_x \widetilde{\textbf{u}} \ \textup{d}x\textup{d}t \\
	&\hspace{1.5cm} -\hbar\int_{0}^{\tau}\int_{\Omega} \varrho (\textbf{v}-\widetilde{\textbf{v}}) \cdot \big[ \partial_t \widetilde{\textbf{v}} + \nabla_x \widetilde{\textbf{v}} \cdot \widetilde{\textbf{u}} + \nabla_x \widetilde{\textbf{v}} \cdot (\textbf{u} -\widetilde{\textbf{u}}) \big] \  \textup{d}x \textup{d}t \\
	&\hspace{1.5cm} -\int_{0}^{\tau}  \int_{\overline{\Omega}} \nabla_x \widetilde{\textbf{u}} : \textup{d} \mathfrak{R} \  \textup{d}t -\int_{0}^{\tau} F_1(t) \ \textup{d}t - \hbar \int_{0}^{\tau} F_2(t) \ \textup{d}t,
	\end{align*}
	with
	\begin{align*}
	F_1 (t)&= \int_{\Omega} \frac{\varrho}{\widetilde{\varrho}} \  p'(\widetilde{\varrho}) \big( \partial_t \widetilde{\varrho} + \textbf{u}\cdot \nabla_x \widetilde{\varrho}\big)(t,\cdot) \ {\rm d}x- \int_{\Omega}  p'(\widetilde{\varrho}) \partial_t \widetilde{\varrho}(t,\cdot) \ \textup{d}x , \\
	F_2(t)&= \int_{\Omega} \varrho  \left( \frac{1}{2} \ \textbf{v} \cdot \divv_x \nabla_x^{\top} \widetilde{\textbf{u}} + \textbf{v} \cdot\nabla_x \widetilde{\textbf{u}} \cdot \textbf{v} + \frac{1}{2}\nabla_x \textbf{u} : \nabla_x \widetilde{\textbf{v}} \right)(t,\cdot) \ \textup{d}x,
	\end{align*}
	recalling that $p'(\widetilde{\varrho})= \widetilde{\varrho} P''(\widetilde{\varrho})$. Now, we can sum and subtract the following integrals
	\begin{equation} \label{e6}
	\int_{0}^{\tau} \int_{\Omega}  \varrho(\textup{\textbf{u}}-\widetilde{\textup{\textbf{u}}})  \cdot \left[ \nabla_x P'(\widetilde{\varrho})- \frac{1}{\widetilde{\varrho}} \divv_x \mathbb{S}(\nabla_x \widetilde{\textup{\textbf{u}}}) - \frac{1}{\widetilde{\varrho}} \divv_x \mathbb{K}(\widetilde{\varrho}, \nabla_x \widetilde{\textbf{v}}) \right]  \textup{d}x \textup{d}t,
	\end{equation}
	\begin{equation} \label{e7}
	\int_{0}^{\tau} \int_{\Omega}  [p'(\widetilde{\varrho})(\varrho-\widetilde{\varrho})+p(\widetilde{\varrho})] \divv_x \widetilde{\textup{\textbf{u}}} \ \textup{d}x \textup{d}t,
	\end{equation}
	\begin{equation} \label{e8}
	\frac{\hbar}{2} \int_{0}^{\tau} \int_{\Omega} \frac{\varrho}{\widetilde{\varrho}} \ (\textbf{v}-\widetilde{\textbf{v}}) \cdot \divv_x \big( \widetilde{\varrho} \nabla_x^{\top} \widetilde{\textbf{u}}\big) \ \textup{d}x \textup{d}t
	\end{equation}
	from the previous inequality to get
	\begin{align*}
	&\left[ \int_{\Omega} E(\varrho, \textup{\textbf{u}}, \textbf{v} \ | \ \widetilde{\varrho}, \widetilde{\textup{\textbf{u}}}, \widetilde{\textbf{v}}) (t, \cdot) \  \textup{d}x\right]_{t=0}^{t=\tau} + \frac{1}{\lambda}\int_{\overline{\Omega}} \textup{d} \trace[\mathfrak{R}](\tau) \\
	&\hspace{1.5cm}+ \int_{0}^{\tau} \int_{\Omega} \mathbb{S}\big(\nabla_x (\textup{\textbf{u}}- \widetilde{\textbf{u}})\big):\nabla_x (\textup{\textbf{u}}-\widetilde{\textup{\textbf{u}}}) \ \textup{d}x \textup{d}t \\
	&\hspace{1.1cm}\leq -\int_{0}^{\tau} \int_{\Omega}  \varrho(\textup{\textbf{u}}-\widetilde{\textup{\textbf{u}}})  \cdot \left[\partial_t \widetilde{\textup{\textbf{u}}} + \nabla_x \widetilde{\textup{\textbf{u}}} \cdot \widetilde{\textup{\textbf{u}}}+ \nabla_x P'(\widetilde{\varrho})- \frac{1}{\widetilde{\varrho}} \divv_x \mathbb{S}(\nabla_x \widetilde{\textup{\textbf{u}}}) - \frac{1}{\widetilde{\varrho}} \divv_x \mathbb{K}(\widetilde{\varrho}, \nabla_x \widetilde{\textbf{v}}) \right]  \textup{d}x \textup{d}t \\
	&\hspace{1.5cm}- \int_{0}^{\tau} \int_{\Omega} \varrho (\textup{\textbf{u}}-\widetilde{\textup{\textbf{u}}}) \cdot \nabla_x \widetilde{\textup{\textbf{u}}} \cdot ( \textup{\textbf{u}}-\widetilde{\textup{\textbf{u}}}) \ \textup{d}x \textup{d}t \\
	&\hspace{1.5cm} -\int_{0}^{\tau} \int_{\Omega}  [p(\varrho)- p'(\widetilde{\varrho})(\varrho-\widetilde{\varrho})-p(\widetilde{\varrho})] \divv_x \widetilde{\textup{\textbf{u}}} \ \textup{d}x \textup{d}t, \\
	&\hspace{1.5cm} - \int_{0}^{\tau}\int_{\Omega} \left( \frac{\varrho}{\widetilde{\varrho}} -1\right) (\textbf{u} -\widetilde{\textbf{u}}) \cdot \divv_x \mathbb{S}(\nabla_x \widetilde{\textbf{u}}) \ \textup{d}x \textup{d}t \\
	&\hspace{1.5cm} -\hbar \int_{0}^{\tau}\int_{\Omega} \varrho (\textbf{v}-\widetilde{\textbf{v}}) \cdot \left[ \partial_t \widetilde{\textbf{v}} + \nabla_x \widetilde{\textbf{v}} \cdot \widetilde{\textbf{u}} + \frac{1}{2 \widetilde{\varrho}} \divv_x \big( \widetilde{\varrho} \nabla_x^{\top} \widetilde{\textbf{u}} \big) \right]  \textup{d}x \textup{d}t \\
	&\hspace{1.5cm} -\hbar \int_{0}^{\tau}\int_{\Omega} \varrho (\textbf{v}-\widetilde{\textbf{v}}) \cdot  \nabla_x \widetilde{\textbf{v}} \cdot (\textbf{u} -\widetilde{\textbf{u}})  \  \textup{d}x \textup{d}t \\
	&\hspace{1.5cm} -\int_{0}^{\tau}  \int_{\overline{\Omega}} \nabla_x \widetilde{\textbf{u}} : \textup{d} \mathfrak{R} \  \textup{d}t -\int_{0}^{\tau} \widetilde{F}_1(t) \ \textup{d}t - \hbar \int_{0}^{\tau} \widetilde{F}_2(t) \ \textup{d}t,
	\end{align*}
	with
	\begin{align*}
	\widetilde{F}_1(t) = F_1(t) &- \int_{\Omega} \frac{\varrho}{\widetilde{\varrho}} \  p'(\widetilde{\varrho}) \big[\textbf{u} \cdot \nabla_x \widetilde{\varrho} - \divv_x \big( \widetilde{\varrho} \widetilde{\textbf{u}}\big)\big](t,\cdot) \ \textup{d}x - \int_{\Omega} p'(\widetilde{\varrho}) \divv_x \big( \widetilde{\varrho}\widetilde{\textbf{u}} \big)(t,\cdot) \ \textup{d}x \\
	&= \int_{\Omega} p'(\widetilde{\varrho}) \left(  \frac{\varrho}{\widetilde{\varrho}}- 1\right) [\partial_t \widetilde{\varrho} + \divv_x (\widetilde{\varrho} \widetilde{\textup{\textbf{u}}})](t,\cdot)  \  \textup{d}x,
	\end{align*}
	and
	\begin{align*}
	\widetilde{F}_2(t)= F_2(t) &+ \frac{1}{\hbar} \int_{\Omega} \frac{\varrho}{\widetilde{\varrho}} \  (\textbf{u}-\widetilde{\textbf{u}} ) \cdot \divv_x \mathbb{K}(\widetilde{\varrho}, \nabla_x\widetilde{\textbf{v}}) \  \textup{d}x - \frac{1}{2}\int_{\Omega} \frac{\varrho}{\widetilde{\varrho}} \  (\textbf{v}-\widetilde{\textbf{v}}) \cdot \divv_x \big( \widetilde{\varrho} \nabla_x^{\top} \widetilde{\textbf{u}} \big) \ \textup{d}x \\
	& = - \int_{\Omega} \varrho (\textbf{v}-\widetilde{\textbf{v}}) \cdot  \nabla_x \widetilde{\textbf{v}} \cdot (\textbf{u} -\widetilde{\textbf{u}})  \  \textup{d}x + \int_{\Omega} \varrho (\textbf{v}-\widetilde{\textbf{v}}) \cdot  \nabla_x \widetilde{\textbf{u}} \cdot (\textbf{v} -\widetilde{\textbf{v}})  \  \textup{d}x
	\end{align*}
	
	We have finally obtained the \textit{relative energy inequality}:
	\begin{equation} \label{relative energy inequality}
	\begin{aligned}
	&\left[ \int_{\Omega} E(\varrho, \textup{\textbf{u}}, \textbf{v} \ | \ \widetilde{\varrho}, \widetilde{\textup{\textbf{u}}}, \widetilde{\textbf{v}}) (t, \cdot) \  \textup{d}x\right]_{t=0}^{t=\tau} + \frac{1}{\lambda}\int_{\overline{\Omega}} \textup{d} \trace[\mathfrak{R}](\tau) \\
	&\hspace{0.4cm}+ \int_{0}^{\tau} \int_{\Omega} \mathbb{S}\big(\nabla_x (\textup{\textbf{u}}- \widetilde{\textbf{u}})\big):\nabla_x (\textup{\textbf{u}}-\widetilde{\textup{\textbf{u}}}) \ \textup{d}x \textup{d}t \\
	&\leq -\int_{0}^{\tau} \int_{\Omega}  \varrho(\textup{\textbf{u}}-\widetilde{\textup{\textbf{u}}})  \cdot \left[\partial_t \widetilde{\textup{\textbf{u}}} + \nabla_x \widetilde{\textup{\textbf{u}}} \cdot \widetilde{\textup{\textbf{u}}}+ \nabla_x P'(\widetilde{\varrho})- \frac{1}{\widetilde{\varrho}} \divv_x \mathbb{S}(\nabla_x \widetilde{\textup{\textbf{u}}}) - \frac{1}{\widetilde{\varrho}} \divv_x \mathbb{K}(\widetilde{\varrho}, \nabla_x\widetilde{\textbf{v}}) \right]  \textup{d}x \textup{d}t \\
	&\hspace{0.4cm}- \int_{0}^{\tau} \int_{\Omega} \varrho \big[(\textup{\textbf{u}}-\widetilde{\textup{\textbf{u}}}) \otimes (\textup{\textbf{u}}-\widetilde{\textup{\textbf{u}}})\big] : \nabla_x \widetilde{\textup{\textbf{u}}} \ \textup{d}x \textup{d}t \\
	&\hspace{0.4cm} -\int_{0}^{\tau} \int_{\Omega}  [p(\varrho)- p'(\widetilde{\varrho})(\varrho-\widetilde{\varrho})-p(\widetilde{\varrho})] \divv_x \widetilde{\textup{\textbf{u}}} \ \textup{d}x \textup{d}t \\
	&\hspace{0.4cm} -\int_{0}^{\tau} \int_{\Omega} p'(\widetilde{\varrho}) \left(  \frac{\varrho}{\widetilde{\varrho}}- 1\right) [\partial_t \widetilde{\varrho} + \divv_x (\widetilde{\varrho} \widetilde{\textup{\textbf{u}}})] \  \textup{d}x \textup{d}t, \\
	&\hspace{0.4cm} - \int_{0}^{\tau}\int_{\Omega} \left( \frac{\varrho}{\widetilde{\varrho}} -1\right) (\textbf{u} -\widetilde{\textbf{u}}) \cdot \divv_x \mathbb{S}(\nabla_x \widetilde{\textbf{u}}) \ \textup{d}x \textup{d}t \\
	&\hspace{0.4cm} -\hbar \int_{0}^{\tau}\int_{\Omega} \varrho (\textbf{v}-\widetilde{\textbf{v}}) \cdot \left[ \partial_t \widetilde{\textbf{v}} + \nabla_x \widetilde{\textbf{v}} \cdot \widetilde{\textbf{u}} + \frac{1}{2\widetilde{\varrho}} \divv_x \big( \widetilde{\varrho} \nabla_x^{\top} \widetilde{\textbf{u}} \big) \right]  \textup{d}x \textup{d}t \\
	&\hspace{0.4cm} -\hbar \int_{0}^{\tau}\int_{\Omega} \varrho \big[(\textup{\textbf{v}}-\widetilde{\textup{\textbf{v}}}) \otimes (\textup{\textbf{v}}-\widetilde{\textup{\textbf{v}}})\big] : \nabla_x \widetilde{\textup{\textbf{u}}} \  \textup{d}x \textup{d}t \\
	&\hspace{0.4cm} -\int_{0}^{\tau}  \int_{\overline{\Omega}} \nabla_x \widetilde{\textbf{u}} : \textup{d} \mathfrak{R} \  \textup{d}t.
	\end{aligned}
	\end{equation}
	
	\textbf{Step 3.} The class of functions $[\widetilde{\varrho}, \widetilde{\textbf{u}}]$ satisfying the relative energy inequality \eqref{relative energy inequality} can be extended by a density argument as long as integrals \eqref{e1}--\eqref{e8} remain well-defined. After a careful analysis, we recover the regularity class given by \eqref{rc 1}--\eqref{rc 3}. Notice that we have used the Sobolev embedding
	\begin{equation} \label{Sobolev embedding}
		W^{1, \frac{2\gamma}{\gamma+1}}\hookrightarrow L^{\gamma^*}(\Omega) \quad \mbox{with} \quad \gamma^*:= \frac{2\gamma d}{(d-2)\gamma+d}
	\end{equation}
	implying in particular that
	\begin{equation} \label{p}
		\varrho \in C_{\rm weak,loc}([0,\infty); L^p(\Omega)), \quad p:= \max \left\{ \gamma, \gamma^* \right\}
	\end{equation}
	and the fact that $\gamma^*> \gamma$ as long as $\{d=2\}$ or $\{d=3, \ d/2 < \gamma <d\}$ to get the optimal regularity for the density $\varrho$ and the momentum $\varrho \textbf{u}$.\\[0.1cm]
	
	\textbf{Step 4.} Let us now suppose that the couple $[\widetilde{\varrho}, \widetilde{\textbf{u}}]$ is a strong solution of problem \eqref{nsk continuity equation}--\eqref{nsk balance of momentum}, meaning that
	\begin{align*}
	\partial_t \widetilde{\varrho} + \divv_x (\widetilde{\varrho} \widetilde{\textup{\textbf{u}}})&=0, \\
	\partial_t \widetilde{\textup{\textbf{u}}} + \nabla_x \widetilde{\textup{\textbf{u}}} \cdot \widetilde{\textup{\textbf{u}}}+ \nabla_x P'(\widetilde{\varrho}) &= \frac{1}{\widetilde{\varrho}} \divv_x \big[ \mathbb{S}(\nabla_x \widetilde{\textup{\textbf{u}}}) + \mathbb{K}(\widetilde{\varrho}, \nabla_x\widetilde{\textbf{v}})\big], \\
	\partial_t \widetilde{\textbf{v}} + \nabla_x \widetilde{\textbf{v}} \cdot \widetilde{\textbf{u}} &= - \frac{1}{2\widetilde{\varrho}} \divv_x \big( \widetilde{\varrho} \nabla_x^{\top} \widetilde{\textbf{u}} \big),
	\end{align*}
	where the last one was deduced taking the gradient in the continuity equation \eqref{nsk continuity equation}. Then, the relative energy inequality \eqref{relative energy inequality} reduces to
	\begin{equation} \label{re1}
	\begin{aligned}
	&\left[ \int_{\Omega} E(\varrho, \textup{\textbf{u}}, \textbf{v} \ | \ \widetilde{\varrho}, \widetilde{\textup{\textbf{u}}}, \widetilde{\textbf{v}}) (t, \cdot) \  \textup{d}x\right]_{t=0}^{t=\tau} + \frac{1}{\lambda}\int_{\overline{\Omega}} \textup{d} \trace[\mathfrak{R}](\tau)+ \int_{0}^{\tau} \int_{\Omega} \mathbb{S}\big(\nabla_x (\textup{\textbf{u}}- \widetilde{\textbf{u}})\big):\nabla_x (\textup{\textbf{u}}-\widetilde{\textup{\textbf{u}}}) \ \textup{d}x \textup{d}t \\
	&\hspace{1.5cm} \leq - \int_{0}^{\tau} \int_{\Omega} \varrho \left[(\textup{\textbf{u}}-\widetilde{\textup{\textbf{u}}}) \otimes ( \textup{\textbf{u}}-\widetilde{\textup{\textbf{u}}}) + \hbar (\textbf{v}-\widetilde{\textbf{v}}) \otimes (\textbf{v} -\widetilde{\textbf{v}}) \right] :  \nabla_x \widetilde{\textup{\textbf{u}}} \ \textup{d}x \textup{d}t \\
	&\hspace{1.5cm} -\int_{0}^{\tau} \int_{\Omega}  [p(\varrho)- p'(\widetilde{\varrho})(\varrho-\widetilde{\varrho})-p(\widetilde{\varrho})] \divv_x \widetilde{\textup{\textbf{u}}} \ \textup{d}x \textup{d}t, \\
	&\hspace{1.5cm} - \int_{0}^{\tau}\int_{\Omega} \left( \frac{\varrho}{\widetilde{\varrho}} -1\right) (\textbf{u} -\widetilde{\textbf{u}}) \cdot \divv_x \mathbb{S}(\nabla_x \widetilde{\textbf{u}}) \ \textup{d}x \textup{d}t -\int_{0}^{\tau}  \int_{\overline{\Omega}} \nabla_x \widetilde{\textbf{u}} : \textup{d} \mathfrak{R} \  \textup{d}t.
	\end{aligned}
	\end{equation}
	On the one hand, we have
	\begin{equation*}
	\int_{0}^{\tau} \int_{\Omega} \mathbb{S}\big(\nabla_x (\textup{\textbf{u}}- \widetilde{\textbf{u}})\big):\nabla_x (\textup{\textbf{u}}-\widetilde{\textup{\textbf{u}}}) \ \textup{d}x \textup{d}t \geq \mu \int_{0}^{\tau} \int_{\Omega} |\nabla_x (\textbf{u}- \widetilde{\textbf{u}}) |^2 \ \textup{d}x \textup{d}t;
	\end{equation*}
	on the other hand, we have
	\begin{align*}
	| \varrho (\textup{\textbf{u}}-\widetilde{\textup{\textbf{u}}}) \otimes ( \textup{\textbf{u}}-\widetilde{\textup{\textbf{u}}})| &\leq c_1\frac{1}{2}  \trace [\varrho (\textup{\textbf{u}}-\widetilde{\textup{\textbf{u}}}) \otimes ( \textup{\textbf{u}}-\widetilde{\textup{\textbf{u}}})] = c_1\frac{1}{2} \varrho| \textbf{u}-\widetilde{\textbf{u}}|^2, \\[0.1cm]
	\hbar|\varrho (\textup{\textbf{v}}-\widetilde{\textup{\textbf{v}}}) \otimes ( \textup{\textbf{v}}-\widetilde{\textup{\textbf{v}}})| &\leq c_1\frac{\hbar}{2}  \trace [\varrho (\textup{\textbf{v}}-\widetilde{\textup{\textbf{v}}}) \otimes ( \textup{\textbf{v}}-\widetilde{\textup{\textbf{v}}})] = c_1\frac{\hbar}{2} \varrho| \textbf{v}-\widetilde{\textbf{v}}|^2, \\[0.1cm]
	p(\varrho)- p'(\widetilde{\varrho})(\varrho-\widetilde{\varrho})-p(\widetilde{\varrho}) &= (\gamma-1) \big[P(\varrho)- P'(\widetilde{\varrho})(\varrho-\widetilde{\varrho})-P(\widetilde{\varrho})\big], \\[0.1cm]
	|\mathfrak{R}| &\leq \frac{c_2}{\lambda} \trace[\mathfrak{R}].
	\end{align*}
	Moreover, it is easy to see that
	\begin{equation*}
	P(\varrho)-P'(\widetilde{\varrho}) (\varrho- \widetilde{\varrho}) -P(\widetilde{\varrho}) \geq c(\widetilde{\varrho}) \begin{cases}
	(\varrho-\widetilde{\varrho})^2 &\mbox{if } \varrho \in \left[\frac{\widetilde{\varrho}}{2}, 2\widetilde{\varrho}\right] \\
	(1+\varrho^{\gamma}) &\mbox{otherwise},
	\end{cases}
	\end{equation*}
	and therefore, it is possible to show that
	\begin{equation*}
	\int_{0}^{\tau}\int_{\Omega} \left| \frac{\varrho}{\widetilde{\varrho}} -1\right| \  |\textbf{u} -\widetilde{\textbf{u}}| \ \textup{d}x \textup{d}t \leq c(\delta) 	\int_{0}^{\tau}\int_{\Omega} E(\varrho, \textup{\textbf{u}} \ | \ \widetilde{\varrho}, \widetilde{\textup{\textbf{u}}}) \  \textup{d}x \textup{d}t + \delta  \int_{0}^{\tau} \int_{\Omega} |\textbf{u}-\widetilde{\textbf{u}}|^2 \ \textup{d}x \textup{d}t
	\end{equation*}
	for any $\delta >0$ (see for instance \cite{FeiJinNov}, Section 4.1.1). Therefore, from the Poincar\'{e} inequality and hypothesis \eqref{nsk same initial conditions}, we can rewrite \eqref{re1} as
	\begin{equation*}
	\begin{aligned}
	&\int_{\Omega} E(\varrho, \textup{\textbf{u}}, \textbf{v} \ | \ \widetilde{\varrho}, \widetilde{\textup{\textbf{u}}}, \widetilde{\textbf{v}}) (\tau, \cdot) \  \textup{d}x + \frac{1}{\lambda}\int_{\overline{\Omega}} \textup{d} \trace[\mathfrak{R}](\tau)+(1-\delta) \int_{0}^{\tau} \int_{\Omega} |\textbf{u}-\widetilde{\textbf{u}}|^2 \ \textup{d}x \textup{d}t \\
	&\hspace{1.5cm} \leq c(\delta, \widetilde{\varrho},\nabla_x \widetilde{\textbf{u}}, \divv_x \mathbb{S}( \nabla_x \widetilde{\textbf{u}})) \int_{0}^{\tau} \left( \int_{\Omega} E(\varrho, \textup{\textbf{u}}, \textbf{v} \ | \ \widetilde{\varrho}, \widetilde{\textup{\textbf{u}}}, \widetilde{\textbf{v}}) (t, \cdot) \  \textup{d}x + \frac{1}{\lambda}\int_{\overline{\Omega}} \textup{d} \trace[\mathfrak{R}](t) \right) \textup{d}t.
	\end{aligned}
	\end{equation*}
	Applying Gronwall's lemma, we can recover that
	\begin{equation*}
	\int_{\Omega} E(\varrho, \textup{\textbf{u}}, \textbf{v} \ | \ \widetilde{\varrho}, \widetilde{\textup{\textbf{u}}}, \widetilde{\textbf{v}}) (\tau, \cdot) \  \textup{d}x + \frac{1}{\lambda}\int_{\overline{\Omega}} \textup{d} \trace[\mathfrak{R}](\tau) \leq 0,
	\end{equation*}
	but since the quantity on the left-hand side is non-negative, this is possible if and only if \eqref{equivalent condition} holds.
	
	\subsection{Proof of Theorem \ref{ek weak-strong uniqueness}}
	
	We repeat the same passages performed before. Notice that in this case the relative energy functional is
	\begin{equation*}
	E (\varrho, \textbf{J}, \textbf{v} \ | \ \widetilde{\varrho}, \widetilde{\textbf{\textup{u}}}, \widetilde{\textbf{v}})=\frac{1}{2 \varrho}  |\textbf{J}- \varrho\widetilde{\textbf{\textup{u}}}|^2  + P(\varrho) -P'(\widetilde{\varrho}) (\varrho-\widetilde{\varrho}) -P(\widetilde{\varrho}) + \frac{\hbar}{2} \varrho \left|\textbf{v}-\widetilde{\textbf{v}}\right|^2,
	\end{equation*}
	and therefore the relative energy associated to the Euelr-Korteweg system \eqref{ek continuity equation}--\eqref{ek balance of momentum} reads
	\begin{align*}
	&\left[ \int_{\Omega} E(\varrho, \textup{\textbf{J}}, \textbf{v} \ | \ \widetilde{\varrho}, \widetilde{\textup{\textbf{u}}}, \widetilde{\textbf{v}}) (t, \cdot) \  \textup{d}x\right]_{t=0}^{t=\tau} + \frac{1}{\lambda}\int_{\overline{\Omega}} \textup{d} \trace[\mathfrak{R}](\tau) \\
	&\hspace{1.1cm}\leq -\int_{0}^{\tau} \int_{\Omega}  (\textup{\textbf{J}}-\varrho\widetilde{\textup{\textbf{u}}})  \cdot \left[\partial_t \widetilde{\textup{\textbf{u}}} + \nabla_x \widetilde{\textup{\textbf{u}}} \cdot \widetilde{\textup{\textbf{u}}}+ \nabla_x P'(\widetilde{\varrho}) - \frac{1}{\widetilde{\varrho}} \divv_x \mathbb{K}(\widetilde{\varrho}, \nabla_x\widetilde{\textbf{v}}) \right]  \textup{d}x \textup{d}t \\
	&\hspace{1.5cm}- \int_{0}^{\tau} \int_{\Omega}\frac{1}{\varrho} \big[(\textup{\textbf{J}}-\varrho\widetilde{\textup{\textbf{u}}}) \otimes (\textup{\textbf{J}}-\varrho\widetilde{\textup{\textbf{u}}})\big] : \nabla_x \widetilde{\textup{\textbf{u}}} \ \textup{d}x \textup{d}t \\
	&\hspace{1.5cm} -\int_{0}^{\tau} \int_{\Omega}  [p(\varrho)- p'(\widetilde{\varrho})(\varrho-\widetilde{\varrho})-p(\widetilde{\varrho})] \divv_x \widetilde{\textup{\textbf{u}}} \ \textup{d}x \textup{d}t \\
	&\hspace{1.5cm} -\int_{0}^{\tau} \int_{\Omega} p'(\widetilde{\varrho}) \left(  \frac{\varrho}{\widetilde{\varrho}}- 1\right) [\partial_t \widetilde{\varrho} + \divv_x (\widetilde{\varrho} \widetilde{\textup{\textbf{u}}})] \  \textup{d}x \textup{d}t \\
	&\hspace{1.5cm} -\hbar \int_{0}^{\tau}\int_{\Omega} \varrho (\textbf{v}-\widetilde{\textbf{v}}) \cdot \left[ \partial_t \widetilde{\textbf{v}} + \nabla_x \widetilde{\textbf{v}} \cdot \widetilde{\textbf{u}} + \frac{1}{\widetilde{\varrho}} \divv_x \big( \widetilde{\varrho} \nabla_x^{\top} \widetilde{\textbf{u}} \big) \right]  \textup{d}x \textup{d}t \\
	&\hspace{1.5cm} -\hbar \int_{0}^{\tau}\int_{\Omega} \varrho \big[(\textup{\textbf{v}}-\widetilde{\textup{\textbf{v}}}) \otimes (\textup{\textbf{v}}-\widetilde{\textup{\textbf{v}}})\big] : \nabla_x \widetilde{\textup{\textbf{u}}} \  \textup{d}x \textup{d}t \\
	&\hspace{1.5cm} -\int_{0}^{\tau}  \int_{\overline{\Omega}} \nabla_x \widetilde{\textbf{u}} : \textup{d} \mathfrak{R} \  \textup{d}t.
	\end{align*}
	If we suppose that the couple $[\widetilde{\varrho}, \widetilde{\textbf{u}}]$ is a strong solution then the previous inequality simplifies as
	\begin{equation*}
	\begin{aligned}
	&\left[ \int_{\Omega} E(\varrho, \textup{\textbf{J}}, \textbf{v} \ | \ \widetilde{\varrho}, \widetilde{\textup{\textbf{u}}}, \widetilde{\textbf{v}}) (t, \cdot) \  \textup{d}x\right]_{t=0}^{t=\tau} + \frac{1}{\lambda}\int_{\overline{\Omega}} \textup{d} \trace[\mathfrak{R}](\tau) \\
	&\hspace{1.5cm}- \int_{0}^{\tau} \int_{\Omega}\frac{1}{\varrho} \big[(\textup{\textbf{J}}-\varrho\widetilde{\textup{\textbf{u}}}) \otimes (\textup{\textbf{J}}-\varrho\widetilde{\textup{\textbf{u}}})\big] : \nabla_x \widetilde{\textup{\textbf{u}}} \ \textup{d}x \textup{d}t \\
	&\hspace{1.5cm} -\int_{0}^{\tau} \int_{\Omega}  [p(\varrho)- p'(\widetilde{\varrho})(\varrho-\widetilde{\varrho})-p(\widetilde{\varrho})] \divv_x \widetilde{\textup{\textbf{u}}} \ \textup{d}x \textup{d}t, \\
	&\hspace{1.5cm} -\hbar \int_{0}^{\tau}\int_{\Omega} \varrho \big[(\textup{\textbf{v}}-\widetilde{\textup{\textbf{v}}}) \otimes (\textup{\textbf{v}}-\widetilde{\textup{\textbf{v}}})\big] : \nabla_x \widetilde{\textup{\textbf{u}}} \  \textup{d}x \textup{d}t \\
	&\hspace{1.5cm} -\int_{0}^{\tau}  \int_{\overline{\Omega}} \nabla_x \widetilde{\textbf{u}} : \textup{d} \mathfrak{R} \  \textup{d}t,
	\end{aligned}
	\end{equation*}
	and therefore it is enough to proceed as before.
	
		\section{Existence}
	
	In this section, we aim to prove the existence of dissipative solutions for both the quantum Navier--Stokes and quantum Euler systems. More precisely, we will focus on the following two results.
	
	\begin{theorem}[Existence of dissipative solutions for the quatum Navier--Stokes system] \label{nsk existence}
		For any arbitrarily large $T>0$ and any fixed initial data
		\begin{equation*}
		[\varrho_{0},  \textbf{\textup{J}}_0, E_0] \in L^{\gamma}(\Omega) \times L^{\frac{2\gamma}{\gamma+1}} (\Omega; \mathbb{R}^d)\times [0,\infty)
		\end{equation*}
		such that
		\begin{equation*}
		\varrho_{0}>0, \quad \int_{\Omega}	\left[\frac{1}{2} \frac{|\textbf{\textup{J}}_0|^2}{\varrho_{0}}+ P(\varrho_{0})+\frac{\hbar}{2}\ |\nabla_x \sqrt{\varrho_{0}}|^2\right]\textup{d}x \leq E_0,
		\end{equation*}
		the quantum Navier--Stokes system \eqref{nsk continuity equation}--\eqref{nsk balance of momentum} with constitutive relations \eqref{pressure}--\eqref{viscosity} and boundary conditions \eqref{nsk boundary condition} admits a dissipative solution in the sense of Definition \ref{nsk dissipative solution}.
	\end{theorem}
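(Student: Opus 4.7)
The plan is to construct a three-level approximation in the spirit of \cite{Fei1}: a Galerkin truncation $X_n = \mathrm{span}\{\mathbf{w}_1,\dots,\mathbf{w}_n\}\subset W^{1,2}_0(\Omega;\mathbb{R}^d)$ for the velocity, an artificial viscosity $\varepsilon \Delta_x \varrho$ added to the continuity equation (with Neumann boundary condition), and, if needed for good pressure estimates, an artificial pressure $\delta \varrho^\beta$ for $\beta$ sufficiently large. The initial density $\varrho_{0,n}$ is a smooth truncation of $\varrho_0$ bounded below by some $\underline{\varrho}_n > 0$, while $\mathbf{u}_{0,n} \in X_n$ is chosen so that the approximate energy converges to $E_0$. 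A Schauder fixed-point argument on the map $\mathbf{u}\mapsto \varrho(\mathbf{u})\mapsto$ (the Galerkin-projected velocity produced by the momentum equation) yields local-in-time solutions, extended globally by means of the approximate energy identity; strict positivity $\varrho_n(t,\cdot)\geq\underline{\varrho}_n$ follows from the parabolic maximum principle.

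\textbf{Uniform bounds and compactness.} From the approximate energy balance one reads off, uniformly in $(n,\varepsilon,\delta)$, that $\varrho_n \in L^\infty(0,T; L^\gamma(\Omega))$, $\sqrt{\varrho_n} \in L^\infty(0,T; W^{1,2}(\Omega))$, $\sqrt{\varrho_n}\,\mathbf{u}_n \in L^\infty(0,T; L^2)$ and $\mathbf{u}_n \in L^2(0,T; W^{1,2}_0)$; combined with the Sobolev embedding \eqref{Sobolev embedding} this supplies the integrability of $\varrho_n\mathbf{u}_n$ stated in \eqref{weak convergence momenta}. Time-equicontinuity in a suitable negative Sobolev norm is read off the equations themselves. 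Aubin--Lions--Simon then yields, along a subsequence, $\varrho_n \to \varrho$ in $C_{\mathrm{weak,loc}}$ and a.e., $\sqrt{\varrho_n}\to \sqrt{\varrho}$ strongly in $L^2_{\mathrm{loc}}(L^q)$ for any $q<\gamma^*$, $\nabla_x\sqrt{\varrho_n} \rightharpoonup^* \nabla_x\sqrt{\varrho}$ in $L^\infty(L^2)$, $\varrho_n\mathbf{u}_n \to \varrho\mathbf{u}$ in $C_{\mathrm{weak,loc}}$ in the space appearing in \eqref{weak convergence momenta}, and $\mathbf{u}_n \rightharpoonup \mathbf{u}$ weakly in $L^2(W^{1,2}_0)$.

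\textbf{Limit passage and Reynolds stress.} The continuity equation \eqref{nsk weak formulation continuity equation} passes directly. In the momentum equation, three nonlinearities need not converge to the nonlinearity of the limit; modulo subsequences one has
\begin{align*}
\varrho_n \mathbf{u}_n \otimes \mathbf{u}_n &\rightharpoonup^* \varrho\,\mathbf{u}\otimes \mathbf{u} + \mathfrak{R}^{\mathrm{conv}},\\
p(\varrho_n)\,\mathbb{I} &\rightharpoonup^* p(\varrho)\,\mathbb{I} + \mathfrak{R}^{\mathrm{pres}}\,\mathbb{I},\\
4\,\nabla_x\sqrt{\varrho_n}\otimes \nabla_x\sqrt{\varrho_n} &\rightharpoonup^* 4\,\nabla_x\sqrt{\varrho}\otimes \nabla_x\sqrt{\varrho} + \mathfrak{R}^{\mathrm{cap}},
\end{align*}
with each defect a positive semidefinite symmetric matrix-valued measure, by the convexity of $p$ for $\mathfrak{R}^{\mathrm{pres}}$ and by the structural fact that weak limits of $A\otimes A$ dominate $\bar A \otimes \bar A$ in the semidefinite order for the other two. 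The remaining linear Bohm term $\nabla_x \varrho_n = 2\sqrt{\varrho_n}\nabla_x \sqrt{\varrho_n}$ passes as a product of strongly and weakly convergent factors, leaving no defect. Setting $\mathfrak{R} := \mathfrak{R}^{\mathrm{conv}} + \mathfrak{R}^{\mathrm{pres}}\,\mathbb{I} + \tfrac{\hbar}{4}\mathfrak{R}^{\mathrm{cap}}$ yields \eqref{nsk weak formulation balance of momentum}, while lower semicontinuity of each summand of the energy together with the comparison $|\mathfrak{R}| \leq \lambda\,\trace[\mathfrak{R}]$ valid for positive semidefinite measures produces the constant $\lambda > 0$ and the energy inequality \eqref{nsk energy inequality}, with $E$ realized as the c\`agl\`ad representative of the limit of the approximate energies.

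\textbf{Main difficulty.} The principal technical point is to ensure, after the limit $n\to\infty$ (and then $\varepsilon, \delta \to 0$), that the strict positivity $\varrho > 0$ in $(0,\infty)\times\Omega$ required by Definition \ref{nsk dissipative solution} is preserved; this relies on the quantum analogue of the Bresch--Desjardins entropy, namely the uniform $L^\infty(L^2)$ bound on $\nabla_x\sqrt{\varrho_n}$, together with the parabolic maximum principle at the approximate level and a careful choice of the approximate initial data. A secondary subtlety is the correct treatment of the linear Bohm term $\nabla_x \varrho_n \cdot \divv_x\nabla_x^\top \bm\varphi$: its passage to the limit must rely on the strong $L^2_{\mathrm{loc}}(L^q)$ convergence of $\sqrt{\varrho_n}$ obtained by applying Aubin--Lions to $\sqrt{\varrho_n}$ (on the basis of its $W^{1,2}$ regularity) rather than to $\varrho_n$ directly, since $\nabla_x^2\varrho_n$ itself is only weakly convergent.
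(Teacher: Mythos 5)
Your overall strategy (Galerkin truncation, artificial viscosity, energy bounds, Aubin--Lions compactness, accrual of concentration/oscillation defects into $\mathfrak{R}$) is the right one, and the defects you decompose in the momentum equation are the right objects. However, you depart from the paper in two ways that are worth flagging.

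First, and most significantly, you reverse the order of the limits. You propose to send $n\to\infty$ first and $\varepsilon\to 0$ afterwards (the classical Feireisl--Lions order, possibly with an extra artificial pressure $\delta\varrho^\beta$). The paper does the opposite: $\varepsilon\to 0$ \emph{first}, keeping $n$ \emph{fixed}. This is a deliberate choice that exploits the finite dimensionality of $X_n$: with $\textbf{u}_{\varepsilon,n}\in C([0,T];X_n)$ one has $W^{1,\infty}$ control of the velocity uniformly in $\varepsilon$, hence uniform $L^\infty$ upper and lower bounds on $\varrho_{\varepsilon,n}$ via the maximum principle, and the $\varepsilon\to 0$ step can already be completed with Reynolds stresses accounting for the pressure and capillarity defects (see Lemmas \ref{convergences epsilon } and \ref{limit epsilon to zero}). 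The subsequent $n\to\infty$ limit then adds further defects on top of $\widetilde{\mathfrak{R}}=\lim\mathfrak{R}_n$. Your order also works in principle --- the place where defects accrue merely shifts to the $\varepsilon\to 0$ step --- but since dissipative solutions absorb any loss of compactness, the improved pressure estimates that the $\delta$-level is designed to produce are unnecessary; you correctly hedge on this, but the paper dispenses with $\delta$ entirely.

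Second, in your momentum-limit decomposition you include a convective defect $\mathfrak{R}^{\mathrm{conv}}$. For the Navier--Stokes system this term is actually not needed: the viscosity gives $\textbf{u}_n\rightharpoonup\textbf{u}$ in $L^2(0,T;W^{1,2}_0)$, which combined with $\varrho_n\textbf{u}_n\to\varrho\textbf{u}$ in $C([0,T];W^{-1,2})$ yields $\varrho_n\textbf{u}_n\otimes\textbf{u}_n\rightharpoonup\varrho\textbf{u}\otimes\textbf{u}$ weakly in $L^2(0,T;L^r)$ for some $r>1$ with no defect, cf.\ \eqref{convergence convective terms n}. The paper's $\mathfrak{R}$ is therefore built only from the pressure and capillarity defects (plus the limit $\widetilde{\mathfrak{R}}$ of the previous level). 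Including $\mathfrak{R}^{\mathrm{conv}}$ is harmless since it vanishes, but it conflates the Navier--Stokes case with the Euler one (Theorem \ref{ek existence}), where this defect genuinely appears.

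Finally, a minor point: the paper uses a Banach--Caccioppoli contraction argument for local existence rather than Schauder, but this is cosmetic. Your discussion of the strict positivity $\varrho>0$ correctly identifies the issue, though neither your sketch nor the paper fully resolves how the lower bound survives the $n\to\infty$ limit when $\underline{\varrho}_n$ may degenerate; the $L^\infty(L^2)$ control of $\nabla_x\sqrt{\varrho_n}$ by itself does not preclude vanishing of the limit density on a null set.
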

	
	\begin{theorem}[Existence of dissipative solutions for the quantum Euler system] \label{ek existence}
		For any arbitrarily large $T>0$ and any fixed initial data
		\begin{equation*}
		[\varrho_{0},  \textbf{\textup{J}}_0, E_0] \in L^{\gamma}(\Omega) \times L^{\frac{2\gamma}{\gamma+1}} (\Omega; \mathbb{R}^d)\times [0,\infty)
		\end{equation*}
		such that
		\begin{equation*}
		\varrho_{0}>0, \quad \int_{\Omega}	\left[\frac{1}{2} \frac{|\textbf{\textup{J}}_0|^2}{\varrho_{0}}+ P(\varrho_{0})+\frac{\hbar}{2}\ |\nabla_x \sqrt{\varrho_{0}}|^2\right]\textup{d}x \leq E_0,
		\end{equation*}
		the  quantum Euler system \eqref{ek continuity equation}--\eqref{ek balance of momentum} with the isentropic pressure \eqref{pressure} and boundary conditions \eqref{ek boundary conditions} admits a dissipative solution in the sense of Definition \ref{ek dissipative solution}.
	\end{theorem}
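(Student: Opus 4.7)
The plan is to construct the desired dissipative solution as the vanishing viscosity limit of a family of dissipative solutions of the quantum Navier--Stokes system supplied by Theorem \ref{nsk existence}. For each $\varepsilon \in (0,1)$, I consider the system \eqref{nsk continuity equation}--\eqref{nsk balance of momentum} with viscosity coefficients $\varepsilon\mu, \varepsilon\lambda$ in \eqref{viscosity} and apply Theorem \ref{nsk existence} with the same initial data $[\varrho_0, \textbf{J}_0, E_0]$, obtaining solutions $[\varrho_\varepsilon, \textbf{u}_\varepsilon]$ with Reynolds stresses $\mathfrak{R}_\varepsilon$, energies $E_\varepsilon$, and momenta $\textbf{J}_\varepsilon := \varrho_\varepsilon\textbf{u}_\varepsilon$. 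Using $|\nabla_x\varrho_\varepsilon| = 2\sqrt{\varrho_\varepsilon}\,|\nabla_x\sqrt{\varrho_\varepsilon}|$, the energy inequality \eqref{nsk energy inequality} and the compatibility relation in item (iv) of Definition \ref{nsk dissipative solution} (with $\lambda$ chosen uniformly in $\varepsilon$) give the uniform bounds $\varrho_\varepsilon\in L^\infty(0,T;L^\gamma\cap W^{1,2\gamma/(\gamma+1)}(\Omega))$, $\textbf{J}_\varepsilon\in L^\infty(0,T;L^{2\gamma/(\gamma+1)})$, $\mathfrak{R}_\varepsilon\in L^\infty(0,T;\mathcal{M}^+)$, and $E_\varepsilon\in L^\infty(0,T)$. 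Most importantly, $\varepsilon\|\nabla_x\textbf{u}_\varepsilon\|^2_{L^2_{t,x}}$ is uniformly bounded, so the entire viscous contribution $\int\mathbb{S}(\varepsilon\nabla_x\textbf{u}_\varepsilon):\nabla_x\bm\varphi$ vanishes in the limit for every smooth test field.

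Next, the weak formulations imply equicontinuity in time of $t\mapsto \int_\Omega \varrho_\varepsilon \varphi$ and $t\mapsto \int_\Omega \textbf{J}_\varepsilon\cdot\bm\varphi$ for test functions $\varphi, \bm\varphi$; combined with the Rellich--Kondrachov compact embedding applied to $\sqrt{\varrho_\varepsilon}$, an Arzel\`a--Ascoli argument produces a subsequence such that $\varrho_\varepsilon\to\varrho$ in $C_{\rm weak,loc}([0,\infty);L^\gamma(\Omega))$, $\textbf{J}_\varepsilon\to\textbf{J}$ in $C_{\rm weak,loc}([0,\infty);L^q(\Omega;\mathbb{R}^d))$ with $q$ from \eqref{weak convergence momenta}, $\sqrt{\varrho_\varepsilon}\to\sqrt{\varrho}$ strongly in $L^p_{\rm loc}$ (hence $\varrho_\varepsilon\to\varrho$ strongly in $L^{p/2}_{\rm loc}$), $\nabla_x\sqrt{\varrho_\varepsilon}\rightharpoonup\nabla_x\sqrt{\varrho}$ in $L^2_{\rm loc}$, $\mathfrak{R}_\varepsilon\overset{*}{\rightharpoonup}\mathfrak{R}_0$, and $E_\varepsilon\to E$ pointwise a.e.\ via Helly's selection theorem for non-increasing functions. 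These convergences pass the limit in the continuity equation \eqref{ek weak formulation continuity equation}, in the pressure term (using the strong convergence $\varrho_\varepsilon\to\varrho$), and in the linear quantum contribution $\nabla_x\varrho_\varepsilon\cdot\divv_x\nabla_x^\top\bm\varphi$.

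The two remaining nonlinear quantities --- the convective tensor $\textbf{J}_\varepsilon\otimes\textbf{J}_\varepsilon/\varrho_\varepsilon=\varrho_\varepsilon\textbf{u}_\varepsilon\otimes\textbf{u}_\varepsilon$ and the capillarity tensor $\nabla_x\sqrt{\varrho_\varepsilon}\otimes\nabla_x\sqrt{\varrho_\varepsilon}$ --- converge only weakly-$*$ in $\mathcal{M}(\overline\Omega;\mathbb{R}^{d\times d}_{\rm sym})$, and their limits differ from $\textbf{J}\otimes\textbf{J}/\varrho$ and $\nabla_x\sqrt{\varrho}\otimes\nabla_x\sqrt{\varrho}$ respectively by positive semidefinite symmetric defect measures. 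I absorb these defects, together with $\mathfrak{R}_0$, into a new Reynolds stress $\mathfrak{R}\in L^\infty(0,T;\mathcal{M}^+)$. For the energy inequality \eqref{ek energy inequality}, I apply weak lower semicontinuity to each contribution in $E_\varepsilon$ (exploiting the convexity of $[\varrho,\textbf{J}]\mapsto|\textbf{J}|^2/\varrho$ on $\{\varrho>0\}$ and of $\varrho\mapsto P(\varrho)$), drop the non-negative dissipation $\int\mathbb{S}(\nabla_x\textbf{u}_\varepsilon):\nabla_x\textbf{u}_\varepsilon$ from \eqref{nsk energy inequality}, and enlarge $\lambda$ if necessary to preserve the compatibility in Definition \ref{ek dissipative solution}(iv).

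The hardest step is the rigorous identification of the convective defect. Because viscosity vanishes, there is no strong compactness of $\textbf{u}_\varepsilon$, and only $\sqrt{\varrho_\varepsilon}\textbf{u}_\varepsilon\in L^\infty(0,T;L^2)$ is uniformly controlled; I must show that the weak-$*$ limit of $\varrho_\varepsilon\textbf{u}_\varepsilon\otimes\textbf{u}_\varepsilon$ exceeds $\textbf{J}\otimes\textbf{J}/\varrho$ by a positive semidefinite tensor-valued measure. This is accomplished through a Young-measure/DiPerna--Majda-type argument exploiting the pointwise non-negativity of each element of the sequence, the strong convergence of $\sqrt{\varrho_\varepsilon}$, and the convexity of $[\varrho,\textbf{J}]\mapsto\textbf{J}\otimes\textbf{J}/\varrho$. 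An analogous argument handles the capillarity defect via the rank-one non-negativity of $\nabla_x\sqrt{\varrho_\varepsilon}\otimes\nabla_x\sqrt{\varrho_\varepsilon}$. Once these defects are in place, the remaining verification of the regularity class in Definition \ref{ek dissipative solution}(i) is routine.
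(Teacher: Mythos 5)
Your proposal follows the same overall strategy as the paper: scale the viscous stress by a vanishing parameter, apply Theorem \ref{nsk existence} at each scale, use the energy inequality to get the key $L^2$ bound $\varepsilon\|\nabla_x\textbf{u}_\varepsilon\|_{L^2}^2\lesssim 1$ so the viscous term disappears from the momentum identity, and absorb the nonlinear defects into the Reynolds stress $\mathfrak{R}$, proving positive-semidefiniteness by convexity and lower semicontinuity. That is precisely the paper's route, and your sketch of the defect identification (convexity of $[\varrho,\textbf{J}]\mapsto\textbf{J}\otimes\textbf{J}/\varrho$, rank-one nonnegativity of $\nabla_x\sqrt{\varrho}\otimes\nabla_x\sqrt{\varrho}$) matches the paper's argument, modulo your heavier Young-measure phrasing where the paper only needs elementary convexity.

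There is one step that is not justified as written and differs from the paper. You claim that a Rellich--Kondrachov/Arzel\`{a}--Ascoli argument yields strong convergence of $\sqrt{\varrho_\varepsilon}$ (hence of $\varrho_\varepsilon$) and use it to conclude that the pressure term passes to the limit with \emph{no} defect. Two problems: (a) the time-compactness needed for $\sqrt{\varrho_\varepsilon}$ is not immediate, since only $\partial_t\varrho_\varepsilon$, not $\partial_t\sqrt{\varrho_\varepsilon}$, is controlled by the continuity equation, and $\varrho_\varepsilon$ is not uniformly bounded away from zero in $\varepsilon$; and (b) even granting strong convergence of $\varrho_\varepsilon$ in $L^p_{\rm loc}$ for $p$ below the critical Sobolev exponent, $p(\varrho_\varepsilon)=a\varrho_\varepsilon^\gamma$ is bounded only in $L^\infty(0,T;L^1(\Omega))$, so concentrations can produce a genuine pressure defect $\overline{p(\varrho)}-p(\varrho)\geq 0$ in $\mathcal{M}^+$. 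The paper does not try to rule this out; it simply adds the term $(\overline{p(\varrho)}-p(\varrho))\,\mathbb{I}$ to $\mathfrak{R}$, whose nonnegativity again follows from convexity of $\varrho\mapsto p(\varrho)$. Since you are already building $\mathfrak{R}$ from the convective and capillarity defects, you should simply include the pressure defect as well; doing so removes the unsupported compactness claim and makes your argument line up with the paper's.
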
	
	The first theorem will be proved by employing a two-level approximation scheme based on addition of artificial viscosity terms, in order to convert the hyperbolic system into a parabolic one, and approximation via the Faedo-Galerkin technique. The second theorem will be obtained by letting the viscosity to go to zero in the quantum Navier--Stokes equations.

	\subsection{Proof of Theorem \ref{nsk existence}}
	
	From now one, let the time $T>0$ be fixed arbitrarily large. We  start by choosing a family $\{X_n\}_{n\in \mathbb{N}}$ of finite-dimensional spaces $X_n \subset L^2(\Omega; \mathbb{R}^d)$, such that
	\begin{equation*}
	X_n:= { \textup{span}} \{ \textbf{w}_i | \ \textbf{w}_i \in C_c^{\infty}(\Omega; \mathbb{R}^d), \ i=1, \dots, n \},
	\end{equation*}
	where $\textbf{w}_i$ are orthonormal with respect to the standard scalar product in $L^2(\Omega; \mathbb{R}^d)$. Now, for each $\varepsilon>0$ and $n \in \mathbb{N}$ fixed, we consider the following system
	\begin{align}
		\partial_t \varrho_{\varepsilon, n} +\divv_x (\varrho_{\varepsilon, n} \textbf{u}_{\varepsilon, n}) &= \varepsilon \Delta_x \varrho_{\varepsilon, n}, \label{approximation continuity equation} \\
			\partial_t(\varrho_{\varepsilon, n} \textbf{u}_{\varepsilon, n}) + \divv_x(\varrho_{\varepsilon, n} \textbf{u}_{\varepsilon, n}\otimes \textbf{u}_{\varepsilon, n}) &+\nabla_x p(\varrho_{\varepsilon, n}) +\varepsilon \nabla_x \textbf{u}_{\varepsilon, n} \cdot \nabla_x \varrho_{\varepsilon, n}  \label{approximation momentum equation}\\
			&= \divv_x \big( \mathbb{S}(\nabla_x \textbf{u}_{\varepsilon, n}) + \mathbb{K}(\varrho_{\varepsilon, n}, \nabla_x \varrho_{\varepsilon, n}, \nabla_x^2 \varrho_{\varepsilon, n})  \big) \notag
	\end{align}
	on $(0,T) \times \Omega$, where we look for approximated velocities
	\begin{equation*}
		\textbf{u}_{\varepsilon,n} \in C([0,T]; X_n).
	\end{equation*}
	Moreover, we impose the homogeneous Neumann and no-slip boundary conditions for the density and velocity, respectively
	\begin{equation} \label{boundary conditions approximated system}
		\nabla_x \varrho_{\varepsilon,n} \cdot \textbf{n} |_{\partial \Omega} =0, \quad \textbf{u}_{\varepsilon,n}|_{\partial \Omega}=0,
	\end{equation}
	and we fix the initial conditions
	\begin{equation*}
		\varrho_{\varepsilon,n}(0,\cdot)= \varrho_{0,n}, \quad 	(\varrho_{\varepsilon,n} \textbf{u}_{\varepsilon,n})(0,\cdot)= \textbf{J}_0 \quad \mbox{on }\Omega,
	\end{equation*}
	where the initial densities $\{\varrho_{0,n}\}_{n \in \mathbb{N}}\subset W^{1,2}(\Omega)$, $0< \underline{\varrho}_{n} \leq \varrho_{0,n} \leq \overline{ \varrho}_n < \infty$, are chosen in such a way that
	\begin{equation*}
		\varrho_{0,n} \rightarrow \varrho_0 \ \mbox{in } L^1(\Omega) \ \mbox{as } n\rightarrow \infty,
	\end{equation*}
	with the couple $(\varrho_0, \textbf{J}_0)$ as in the hypotheses of Theorem \ref{nsk existence}. Solvability of the approximated problem will be discussed in the following sections.
	
	\subsubsection{On the approximated continuity equation}
	
	For any fixed $\varepsilon>0$, $n \in \mathbb{N}$ and given $\textbf{u}_{\varepsilon,n} \in C([0,T]; X_n)$, let us focus on finding that unique weak  solution $\varrho_{\varepsilon,n} = \varrho[\textbf{u}_{\varepsilon,n}]$ of equation \eqref{approximation continuity equation}. Before stating the existence result for the approximated continuity equation, notice that since $X_n$ is finite-dimensional, all the norms on $X_n$ induced by $W^{k,p}$-norms, with $k\in \mathbb{N}$ and $1\leq p\leq \infty$, are equivalent; thus, we deduce that
	\begin{equation*}
		\textbf{u}_{\varepsilon,n} \in L^{\infty}(0,T; W^{1,\infty}(\Omega; \mathbb{R}^d)),
	\end{equation*}
	and there exist two constants $0<\underline{n}<\overline{n}<\infty$, depending solely on the dimension $n$ of $X_n$, such that for any $t\in [0,T]$
	\begin{equation} \label{connection norm in Xn and W}
		\underline{n} \| \textbf{u}_{\varepsilon,n}(t, \cdot) \|_{W^{1,\infty}(\Omega)} \leq \| \textbf{u}_{\varepsilon,n}(t, \cdot) \|_{X_n} \leq \overline{n} \| \textbf{u}_{\varepsilon,n}(t, \cdot) \|_{W^{1,\infty}(\Omega)}.
	\end{equation}
	
	\begin{lemma} \label{existence approximated densities}
		Let $\Omega \subset \mathbb{R}^d$ be a bounded domain of class $C^2$ and let $\varepsilon>0$, $n \in \mathbb{N}$ be fixed. For any given $\textup{\textbf{u}}_{\varepsilon,n} \in C([0,T]; X_n)$, there exists a unique solution
		\begin{equation*}
			\varrho_{\varepsilon, n} \in L^2((0,T); W^{2,2}(\Omega)) \cap W^{1,2}(0,T;L^2(\Omega))
		\end{equation*}
		of equation \eqref{approximation continuity equation} with $\varrho_{\varepsilon, n}(0,\cdot)=\varrho_{0,n}$. Moreover,
		\begin{itemize}
			\item[(i)] (bound from above - maximum principle) the weak solution $\varrho_{\varepsilon, n}$ satisfies
			\begin{equation} \label{bound above density}
			\| \varrho_{\varepsilon, n} \|_{L^{\infty}((0,\tau) \times \Omega)} \leq \overline{\varrho}_n \exp \left( \tau \|\divv_x \textup{\textbf{u}}_{\varepsilon, n}\|_{L^{\infty}((0,T) \times \Omega)} \right),
			\end{equation}
			for any $\tau \in [0,T]$, with
			\begin{equation} \label{maximum approximated initial density}
			\overline{\varrho}_n:= \max_{\Omega}  \varrho_{0,n};
			\end{equation}
			\item[(ii)] (bound from below) the weak solution $\varrho_{\varepsilon, n}$ satisfies
			\begin{equation} \label{bound below density}
			\essinf_{(0,\tau) \times \Omega} \varrho_{\varepsilon, n}(t,x)\geq \underline{\varrho}_n \exp \left( -\tau \|\divv_x \textup{\textbf{u}}_{\varepsilon, n}\|_{L^{\infty}((0,T) \times \Omega)}\right),
			\end{equation}
			for any $\tau \in [0,T]$, with
			\begin{equation} \label{minimum approximated initial density}
			\underline{\varrho}_n:= \min_{\Omega}  \varrho_{0,n};
			\end{equation}
			\item[(iii)] let $\textup{\textbf{u}}_1, \textup{\textbf{u}}_2 \in C([0,T]; X_n)$ be such that
			\begin{equation*}
			\max_{i=1,2} \| \textup{\textbf{u}}_i\|_{L^{\infty}(0,T; W^{1,\infty}(\Omega;\mathbb{R}^d))} \leq K
			\end{equation*}
			with $K \in (0,\infty)$, and let $\varrho_i= \varrho[\textup{\textbf{u}}_i]$, $i=1,2$ be the weak solutions of the approximated continuity equation \eqref{approximation continuity equation}sharing the same initial data $\varrho_{0,n}$. Then, for any $\tau \in [0,T]$,
			\begin{equation} \label{difference two solution approximates densities}
			\| (\varrho_1- \varrho_2)(\tau, \cdot)\|_{L^2(\Omega)} \leq c_1 \| \textup{\textbf{u}}_1 -\textup{\textbf{u}}_2 \|_{L^{\infty}(0,\tau; W^{1,\infty}(\Omega; \mathbb{R}^d))}
			\end{equation}
			with $c_1=c_1(\varepsilon, \varrho_0, T, K)$.
		\end{itemize}
	\end{lemma}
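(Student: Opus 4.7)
The equation is linear and uniformly parabolic once $\textbf{u}_{\varepsilon,n}$ is fixed, so the plan is standard: establish existence/uniqueness via classical parabolic theory, derive the pointwise bounds (i)--(ii) by comparing with spatially constant super- and sub-solutions, and obtain the Lipschitz dependence (iii) by an energy estimate on the difference. The whole argument exploits in an essential way that elements of $X_n$ are compactly supported in $\Omega$, so that $\textbf{u}_{\varepsilon,n}$ vanishes on $\partial\Omega$ and integration by parts never produces boundary contributions.

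\textbf{Existence and uniqueness.} Because $\textbf{u}_{\varepsilon,n}\in C([0,T];X_n)$ with $X_n$ spanned by compactly supported smooth functions, the drift and its derivatives are bounded, so the equation rewrites as
\begin{equation*}
\partial_t\varrho-\varepsilon\Delta_x\varrho=-\textbf{u}_{\varepsilon,n}\cdot\nabla_x\varrho-\varrho\,\divv_x\textbf{u}_{\varepsilon,n},\qquad \nabla_x\varrho\cdot\textbf{n}|_{\partial\Omega}=0,\qquad \varrho(0,\cdot)=\varrho_{0,n}\in W^{1,2}(\Omega).
\end{equation*}
I would construct the solution by a Faedo--Galerkin scheme in the basis of Neumann eigenfunctions of $-\Delta_x$, deriving energy and $H^2$-regularity estimates uniform in the truncation parameter, and pass to the limit to obtain a solution in $L^2(0,T;W^{2,2}(\Omega))\cap W^{1,2}(0,T;L^2(\Omega))$. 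Uniqueness alone follows by testing the difference of two solutions with itself.

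\textbf{Pointwise bounds.} The key is to compare $\varrho_{\varepsilon,n}$ with the space-independent profiles
\begin{align*}
M(t)&:=\overline{\varrho}_n\exp\bigl(t\|\divv_x\textbf{u}_{\varepsilon,n}\|_{L^{\infty}((0,T)\times\Omega)}\bigr),\\
m(t)&:=\underline{\varrho}_n\exp\bigl(-t\|\divv_x\textbf{u}_{\varepsilon,n}\|_{L^{\infty}((0,T)\times\Omega)}\bigr),
\end{align*}
which by direct computation are respectively a super- and sub-solution of the continuity equation and trivially satisfy the Neumann condition. Testing the equation for $w:=\varrho_{\varepsilon,n}-M$ with $w^+$, using $\textbf{u}_{\varepsilon,n}|_{\partial\Omega}=0$ to integrate by parts the convective and diffusive terms, and applying Gronwall's inequality force $w^+\equiv 0$, whence \eqref{bound above density}; the analogous test against $(m-\varrho_{\varepsilon,n})^+$ yields \eqref{bound below density}.

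\textbf{Continuous dependence.} Setting $\sigma:=\varrho_1-\varrho_2$, the difference of the two continuity equations reads
\begin{equation*}
\partial_t\sigma-\varepsilon\Delta_x\sigma+\divv_x(\sigma\textbf{u}_1)=-\divv_x\bigl(\varrho_2(\textbf{u}_1-\textbf{u}_2)\bigr),\qquad \sigma(0,\cdot)=0.
\end{equation*}
Multiplying by $\sigma$, integrating over $\Omega$, exploiting once more $\textbf{u}_i|_{\partial\Omega}=0$ to eliminate boundary terms, and invoking the $L^\infty$-bound on $\varrho_2$ from step (i) together with Young's inequality, I get
\begin{equation*}
\frac{d}{dt}\|\sigma\|_{L^2(\Omega)}^2+\varepsilon\|\nabla_x\sigma\|_{L^2(\Omega)}^2\leq \|\divv_x\textbf{u}_1\|_{L^{\infty}}\|\sigma\|_{L^2(\Omega)}^2+c(\varepsilon,\varrho_0,T,K)\|\textbf{u}_1-\textbf{u}_2\|_{W^{1,\infty}(\Omega;\mathbb{R}^d)}^2,
\end{equation*}
and Gronwall's lemma delivers \eqref{difference two solution approximates densities}. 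None of the three steps is genuinely hard; the only delicate bookkeeping concerns tracking how the $\varepsilon$, $K$, $T$ and $\varrho_0$ dependencies enter the constant $c_1$ in (iii), which in turn requires first having the maximum-principle bound (i) on $\varrho_2$ in $L^\infty$.
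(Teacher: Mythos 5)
Your proposal is correct, but it takes a different presentational route from the paper: the paper simply invokes Lemma~4.3 of Chang, Jin and Novotn\'y \cite{ChaJinNov} and gives no argument, whereas you reconstruct the result directly. Your three steps are sound. The comparison functions $M(t)$ and $m(t)$ are indeed a super- and sub-solution of $\partial_t\varrho+\divv_x(\varrho\textbf{u}_{\varepsilon,n})=\varepsilon\Delta_x\varrho$ because they are spatially constant (so the Laplacian and the convective term $\textbf{u}_{\varepsilon,n}\cdot\nabla_x$ drop out) and $M'(t)\geq -M\divv_x\textbf{u}_{\varepsilon,n}$, $m'(t)\leq -m\divv_x\textbf{u}_{\varepsilon,n}$ pointwise; testing $w=\varrho_{\varepsilon,n}-M$ against $w^+$ uses exactly the two boundary facts you identify, namely $\nabla_x w\cdot\textbf{n}|_{\partial\Omega}=0$ (so the Laplacian produces no boundary term) and $\textbf{u}_{\varepsilon,n}|_{\partial\Omega}=0$ (so the convective term integrates by parts cleanly), yielding $\tfrac{1}{2}\tfrac{d}{dt}\|w^+\|^2_{L^2}\leq\tfrac{1}{2}\|\divv_x\textbf{u}_{\varepsilon,n}\|_{L^\infty}\|w^+\|^2_{L^2}$ and hence $w^+\equiv 0$. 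Step (iii) is likewise fine: after testing $\partial_t\sigma-\varepsilon\Delta_x\sigma+\divv_x(\sigma\textbf{u}_1)=-\divv_x(\varrho_2(\textbf{u}_1-\textbf{u}_2))$ with $\sigma$ and integrating by parts, the forcing gives $\int\varrho_2(\textbf{u}_1-\textbf{u}_2)\cdot\nabla_x\sigma$, which is absorbed into $\tfrac{\varepsilon}{2}\|\nabla_x\sigma\|^2_{L^2}$ by Young, introducing the $\varepsilon^{-1}$ and $\|\varrho_2\|_{L^\infty}$ dependence (the latter controlled via part (i) by $\varrho_0$, $K$, $T$) exactly as the statement of $c_1$ requires. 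What each approach buys: the paper's citation is economical and defers all bookkeeping to \cite{ChaJinNov}, while your version is self-contained and makes transparent precisely where the no-slip condition on $X_n$ and the Neumann condition on $\varrho_{\varepsilon,n}$ enter, which is a genuine pedagogical gain.
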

	\begin{proof}
		The proof is a straightforward consequence of Lemma 4.3 in \cite{ChaJinNov}.
	\end{proof}
	
	\subsubsection{On the approximated balance of momentum}
	
	Let us now turn our attention to the approximated balance of momentum \eqref{approximation momentum equation}. The approximate velocities $\textbf{u}_{\varepsilon, n} \in C([0,T]; X_n)$ are looked for to satisfy the integral identity
	\begin{equation} \label{projection momentum equation to finite-dimesional space}
	\begin{aligned}
	&\left[ \int_{\Omega}\varrho_{\varepsilon, n} \textbf{u}_{\varepsilon, n}(t,\cdot) \cdot \bm{\psi}\ \textup{d}x\right]_{t=0}^{t=\tau} = \int_{0}^{\tau} \int_{\Omega} \left[ (\varrho_{\varepsilon, n} \textbf{u}_{\varepsilon, n} \otimes \textbf{u}_{\varepsilon, n}) : \nabla_x \bm{\psi} +p(\varrho_{\varepsilon, n})\divv_x \bm{\psi}\right] \textup{d}x\textup{d}t \\
	&\hspace{2cm}+ \frac{\hbar}{4}  \int_{0}^{\tau}\int_{\Omega} \big[ \nabla_x \varrho_{\varepsilon, n}\cdot  \divv_x \nabla_x^{\top}\bm{\psi} +4 (\nabla_x \sqrt{\varrho_{\varepsilon, n}} \otimes \nabla_x \sqrt{\varrho_{\varepsilon, n}}): \nabla_x \bm{\psi} \big] \ \textup{d}x \textup{d}t \\
	&\hspace{2cm}-\int_{0}^{\tau}\int_{\Omega}\mathbb{S} (\nabla_x \textup{\textbf{u}}_{\varepsilon, n}): \nabla_x \bm{\psi} \ \textup{d}x\textup{d}t - \varepsilon \int_{0}^{\tau}\int_{\Omega} \nabla_x \varrho_{\varepsilon, n} \cdot \nabla_x \textbf{u}_{\varepsilon, n} \cdot \bm{\psi} \ \textup{d}x \textup{d}t
	\end{aligned}
	\end{equation}
	for any test function $\bm{\psi} \in X_n$ and all $\tau \in [0,T]$, with $(\varrho_{\varepsilon,n} \textbf{u}_{\varepsilon,n})(0,\cdot)= \textbf{J}_0$. Now, the integral identity \eqref{projection momentum equation to finite-dimesional space} can be rephrased for any $\tau\in [0,T]$ as
	\begin{equation*}
	\langle \mathscr{M}[\varrho_{\varepsilon, n}(\tau, \cdot)] (\textbf{u}_{\varepsilon, n}(\tau, \cdot)), \bm{\psi}\rangle = \langle \textbf{J}_0^*, \bm{\psi} \rangle + \langle \int_{0}^{\tau} \mathscr{N} [\varrho_{\varepsilon, n}(s, \cdot), \textbf{u}_{\varepsilon, n}(s,\cdot)] \ \textup{d}s, \bm{\psi} \rangle
	\end{equation*}
	with
	\begin{align*}
		\mathscr{M}[\varrho]: X_n \rightarrow X_n^*, \quad &\langle \mathscr{M}[\varrho] \textbf{v}, \textbf{w} \rangle :=\int_{\Omega}\varrho \textbf{v} \cdot \textbf{w} \ \textup{d}x, \\
		\textbf{J}_0^* \in X_n^*, \quad &	\langle \textbf{J}_0^*, \bm{\psi} \rangle:= \int_{\Omega} \textbf{J}_0 \cdot \bm{\psi}  \ \textup{d}x, \\
		\mathscr{N}[\varrho, \textbf{u}] \in X_n^*, \quad & \langle  \mathscr{N} [\varrho, \textbf{u}], \bm{\psi}\rangle :=\int_{\Omega} \left[ (\varrho \textbf{u} \otimes \textbf{u}) : \nabla_x \bm{\psi} +p(\varrho)\divv_x \bm{\psi}\right] \textup{d}x \\
		&\hspace{2cm} + \frac{\hbar}{4} \int_{\Omega} \big[ \nabla_x \varrho\cdot  \divv_x \nabla_x^{\top}\bm{\psi} +4 (\nabla_x \sqrt{\varrho} \otimes \nabla_x \sqrt{\varrho}): \nabla_x \bm{\psi} \big] \ \textup{d}x \\
		&\hspace{2cm} - \int_{\Omega}\mathbb{S} (\nabla_x \textup{\textbf{u}}): \nabla_x \bm{\psi} \ \textup{d}x - \varepsilon \int_{\Omega} \nabla_x \varrho \cdot \nabla_x \textbf{u} \cdot \bm{\psi} \ \textup{d}x.
	\end{align*}
	We are now ready to apply the following lemma.
	
	\begin{lemma}
		Let
		\begin{equation*}
			\mathcal{B}(0, \underline{n}K):= \left\{ \textbf{\textup{v}} \in C([0,T(n)]; X_n)\big| \sup_{t\in[0,T(n)]} \| \textbf{\textup{v}}(t, \cdot)\|_{X_n}\leq \underline{n}K \right\},
		\end{equation*}
		with $\underline{n}$ defined as in \eqref{connection norm in Xn and W}. For $K>0$ sufficiently large and $T(n)$ sufficiently small, the map
		\begin{equation*}
			\mathscr{F}: \mathcal{B}(0, \underline{n}K) \rightarrow C([0,T(n)]; X_n)
		\end{equation*}
		such that
		\begin{equation*}
			\mathscr{F}[\textbf{\textup{u}}_{\varepsilon, n}](\tau, \cdot):= \mathscr{M}^{-1} [\varrho_{\varepsilon, n}(\tau, \cdot)] \left( \textbf{\textup{J}}_0^*+ \int_{0}^{\tau} \mathscr{N} [\varrho_{\varepsilon, n}(s, \cdot), \textbf{\textup{u}}_{\varepsilon, n}(s,\cdot)] \ \textup{d}s \right),
		\end{equation*}
		 is a contraction mapping from the closed ball $\mathcal{B}(0, \underline{n}K)$ onto itself and therefore it admits a unique fixed point $\textbf{\textup{u}}_{\varepsilon, n} \in C([0,T(n)]; X_n)$.
	\end{lemma}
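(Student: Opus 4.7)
The plan is to apply the Banach fixed point theorem on the closed ball $\mathcal{B}(0,\underline{n}K)$ inside the Banach space $C([0,T(n)];X_n)$, so two things must be checked: $\mathscr{F}$ maps the ball into itself, and $\mathscr{F}$ is a strict contraction. The key tools are the norm equivalence \eqref{connection norm in Xn and W} on the finite-dimensional space $X_n$, the two-sided pointwise bounds on $\varrho_{\varepsilon,n}$ provided by Lemma \ref{existence approximated densities}(i)--(ii), and the Lipschitz estimate \eqref{difference two solution approximates densities} for the solution map $\textbf{u}\mapsto \varrho[\textbf{u}]$. Well-posedness of $\mathscr{F}$ itself follows from the observation that, thanks to the strict positivity \eqref{bound below density} of the associated density, the operator $\mathscr{M}[\varrho_{\varepsilon,n}(\tau,\cdot)]:X_n\to X_n^*$ is symmetric and positive definite with $\langle\mathscr{M}[\varrho_{\varepsilon,n}]\textbf{v},\textbf{v}\rangle\ge (\essinf\varrho_{\varepsilon,n})\|\textbf{v}\|_{X_n}^2$, so $\mathscr{M}^{-1}$ has operator norm controlled by $(\essinf\varrho_{\varepsilon,n})^{-1}$, and the regularity $\varrho_{\varepsilon,n}\in L^2(0,T;W^{2,2}(\Omega))$ guarantees continuity of $\tau\mapsto\mathscr{F}[\textbf{u}](\tau,\cdot)$.

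For invariance, fix $\textbf{u}\in \mathcal{B}(0,\underline{n}K)$. Using \eqref{connection norm in Xn and W} one obtains $\|\textbf{u}\|_{L^\infty(0,T(n);W^{1,\infty})}\le K$, whence \eqref{bound above density}--\eqref{bound below density} yield
\[
\underline{\varrho}_n e^{-T(n)K}\le \varrho_{\varepsilon,n}\le \overline{\varrho}_n e^{T(n)K}\quad \mbox{on } [0,T(n)]\times\Omega,
\]
and combined with the parabolic energy estimate for the approximated continuity equation, a uniform bound on $\|\varrho_{\varepsilon,n}\|_{L^2(0,T(n);W^{2,2})}$ depending only on $n,\varepsilon,\varrho_{0,n},K$. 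Using these to estimate each summand of $\mathscr{N}$ in the dual norm $X_n^*$, and then applying $\mathscr{M}^{-1}$, produces a bound
\[
\|\mathscr{F}[\textbf{u}](\tau,\cdot)\|_{X_n}\le \frac{2}{\underline{\varrho}_n}\left(\|\textbf{J}_0^*\|_{X_n^*}+T(n)\,C(n,\varepsilon,\varrho_{0,n},K)\right)
\]
valid for $T(n)$ small enough that $e^{T(n)K}\le 2$. Choosing $K$ so large that the first term is at most $\underline{n}K/2$ and then $T(n)$ so small that the second term is at most $\underline{n}K/2$ gives invariance of the ball.

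For the contraction estimate, take $\textbf{u}_1,\textbf{u}_2\in\mathcal{B}(0,\underline{n}K)$ with densities $\varrho_i=\varrho[\textbf{u}_i]$. The resolvent identity $\mathscr{M}^{-1}[\varrho_1]-\mathscr{M}^{-1}[\varrho_2]=\mathscr{M}^{-1}[\varrho_1]\,(\mathscr{M}[\varrho_2]-\mathscr{M}[\varrho_1])\,\mathscr{M}^{-1}[\varrho_2]$, combined with the uniform lower bound on $\varrho_i$, yields a Lipschitz estimate of $\mathscr{M}^{-1}$ in the density. A componentwise computation, again using the density bounds, gives a Lipschitz estimate of $\mathscr{N}$ in both arguments on bounded subsets. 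Feeding \eqref{difference two solution approximates densities} in to transfer the dependence on $\varrho_1-\varrho_2$ back to $\textbf{u}_1-\textbf{u}_2$ and integrating in time produces
\[
\|\mathscr{F}[\textbf{u}_1]-\mathscr{F}[\textbf{u}_2]\|_{C([0,T(n)];X_n)}\le T(n)\,\widetilde{C}(n,\varepsilon,\varrho_{0,n},K)\,\|\textbf{u}_1-\textbf{u}_2\|_{C([0,T(n)];X_n)},
\]
and shrinking $T(n)$ once more so that $T(n)\widetilde{C}<1$ completes the proof.

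I expect the main obstacle to be the careful bookkeeping of the capillary term $\nabla_x\varrho\cdot\divv_x\nabla_x^{\top}\bm{\psi}+4(\nabla_x\sqrt{\varrho}\otimes\nabla_x\sqrt{\varrho}):\nabla_x\bm{\psi}$ appearing in $\mathscr{N}$: both its boundedness in the dual norm $X_n^*$ and its Lipschitz dependence on $\varrho$ rely decisively on the strict positivity of $\varrho_{\varepsilon,n}$ from Lemma \ref{existence approximated densities}(ii), together with the $W^{2,2}$-regularity and the Lipschitz estimate from Lemma \ref{existence approximated densities}(iii). The constants in these bounds blow up as $K\to\infty$ and as $T(n)$ grows, because the lower bound on $\varrho_{\varepsilon,n}$ deteriorates exponentially in $T(n)K$, which is precisely why only a local existence time $T(n)$ is reachable at this stage and why the global existence in Theorem \ref{nsk existence} will subsequently require the \emph{a priori} energy estimate to prolong the solution.
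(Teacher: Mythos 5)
Your proposal follows essentially the same approach as the paper: the paper's proof is itself a two-sentence sketch that invokes the Banach--Caccioppoli fixed point theorem, uses the two-sided density bounds from Lemma~\ref{existence approximated densities}(i)--(ii) to invert $\mathscr{M}$, and cites the Lipschitz estimate \eqref{difference two solution approximates densities} (with a reference to \cite{ChaJinNov}, Section 4.3.2) for the contraction step. Your elaboration -- norm equivalence to pass between $\|\cdot\|_{X_n}$ and $\|\cdot\|_{W^{1,\infty}}$, lower bound on $\mathscr{M}$ for invertibility, separate ball-invariance and contraction estimates in $T(n)$ -- is exactly what that sketch intends. The only caveat worth keeping in mind, which you correctly flag as the main obstacle, is that estimate \eqref{difference two solution approximates densities} as stated controls only the $L^2$ norm of $\varrho_1-\varrho_2$, whereas the capillary part of $\mathscr{N}$ genuinely involves $\nabla_x\varrho$ and $\nabla_x\sqrt\varrho\otimes\nabla_x\sqrt\varrho$; the Lipschitz estimate you need for these terms must therefore also control $\nabla_x(\varrho_1-\varrho_2)$, which in fact does come out of the same parabolic energy argument for \eqref{approximation continuity equation} (the $\varepsilon\Delta_x$ term yields $\varepsilon\|\nabla_x(\varrho_1-\varrho_2)\|_{L^2_tL^2_x}^2$ on the left-hand side), but is not recorded in the paper's statement of Lemma~\ref{existence approximated densities}(iii).
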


	\begin{proof}
		The lemma is a straightforward consequence of the Banach-Cacciopoli fixed point theorem. Notice in particular that $\varrho_{\varepsilon, n}=\varrho[\textbf{u}_{\varepsilon, n}]$ is the weak solution of equation \eqref{approximation continuity equation} uniquely determined by $\textbf{u}_{\varepsilon, n}$ and thus by Lemma \ref{existence approximated densities} we can deduce that $0<\underline{\varrho}_n e^{-Kt} \leq \varrho_{\varepsilon, n}(t,x) \leq \overline{\varrho}_n e^{Kt}$ for any $t\in [0,T(n)]$ whenever $\textbf{u}_{\varepsilon, n} \in \mathcal{B}(0, \underline{n}K)$, where $\overline{\varrho}_n$, $\underline{\varrho}_n$ are defined as in \eqref{maximum approximated initial density}, \eqref{minimum approximated initial density} respectively. Therefore, the operator $\mathscr{M}$ is invertible and from \eqref{difference two solution approximates densities} it is also easy to show that $\mathscr{F}$ is a contraction mapping, see e.g. \cite{ChaJinNov}, Section 4.3.2 for more details.
	\end{proof}
	
	So far, we have found the velocity $\textbf{u}_{\varepsilon, n}$ solving the integral identity \eqref{projection momentum equation to finite-dimesional space} on the time interval $[0,T(n)]$. However, the previous procedure can be repeated a finite number of times until we reach $T=T(n)$, as long as we have a bound on $\textbf{u}_{\varepsilon, n}$ independent of $T(n)$; in other words, we need some \textit{energy estimates}. We have that
	\begin{equation} \label{projection velocities}
	\begin{aligned}
		\int_{\Omega}\partial_t(\varrho_{\varepsilon, n} \textbf{u}_{\varepsilon, n}) \cdot \bm{\psi}\ \textup{d}x &= \int_{\Omega} \left[ (\varrho_{\varepsilon, n} \textbf{u}_{\varepsilon, n} \otimes \textbf{u}_{\varepsilon, n}) : \nabla_x \bm{\psi} +p(\varrho_{\varepsilon, n})\divv_x \bm{\psi}\right] \textup{d}x \\
		&- \int_{\Omega}\big[ \mathbb{K}(\varrho_{\varepsilon, n}, \nabla_x \varrho_{\varepsilon, n}, \nabla_x^2 \varrho_{\varepsilon, n}) + \mathbb{S} (\nabla_x \textup{\textbf{u}}_{\varepsilon, n}) \big] : \nabla_x\bm{\psi}  \ \textup{d}x \\
		&- \varepsilon \int_{\Omega} \nabla_x \varrho_{\varepsilon, n} \cdot \nabla_x \textbf{u}_{\varepsilon, n} \cdot \bm{\psi} \ \textup{d}x
	\end{aligned}
	\end{equation}
	holds on $(0,T(n))$ for any $\bm{\psi} \in X_n$, with $\varrho_{\varepsilon, n}= \varrho[\textbf{u}_{\varepsilon, n}]$. We can then take $\bm{\psi}= \textbf{u}_{\varepsilon, n}(t,\cdot)$
	\begin{align*}
		\frac{\textup{d}}{\textup{d}t}\int_{\Omega} \frac{1}{2} \varrho_{\varepsilon, n} |\textbf{u}_{\varepsilon, n}|^2  \ \textup{d}x =&- \int_{\Omega} \mathbb{S} (\nabla_x \textup{\textbf{u}}_{\varepsilon, n}): \nabla_x \textup{\textbf{u}}_{\varepsilon, n}  \ \textup{d}x + \int_{\Omega} p(\varrho_{\varepsilon, n})\divv_x \textup{\textbf{u}}_{\varepsilon, n} \ \textup{d}x \\
		&- \int_{\Omega} \mathbb{K}(\varrho_{\varepsilon, n}, \nabla_x \varrho_{\varepsilon, n}, \nabla_x^2 \varrho_{\varepsilon, n}) : \nabla_x \textup{\textbf{u}}_{\varepsilon, n} \ \textup{d}x \\
		& - \int_{\Omega} \frac{1}{2} |\textup{\textbf{u}}_{\varepsilon, n}|^2 \big( \partial_t \varrho_{\varepsilon, n} + \divv_x (\varrho_{\varepsilon, n} \textup{\textbf{u}}_{\varepsilon, n}) - \varepsilon \Delta_x \varrho_{\varepsilon, n} \big) \textup{d}x,
	\end{align*}
	where the last line vanishes due to \eqref{approximation continuity equation}. Multiplying \eqref{approximation continuity equation} by $P'(\varrho)$ we recover that in this context the pressure potential $P=P(\varrho)$ satisfies the following identity
	\begin{equation*}
		p(\varrho_{\varepsilon, n}) \divv_x \textbf{u}_{\varepsilon, n} = - \partial_t P(\varrho_{\varepsilon, n}) -\divv_x(P(\varrho_{\varepsilon, n}) \textbf{u}_{\varepsilon, n}) +\varepsilon P'(\varrho_{\varepsilon, n}) \Delta_x \varrho_{\varepsilon, n} .
	\end{equation*}
	Therefore, the previous integral identity can be rewritten as
	\begin{equation} \label{part appr energy}
		\begin{aligned}
		\frac{\textup{d}}{\textup{d}t}\int_{\Omega} \left(\frac{1}{2} \varrho_{\varepsilon, n} |\textbf{u}_{\varepsilon, n}|^2 + P(\varrho_{\varepsilon, n})\right)  \ \textup{d}x =&- \int_{\Omega} \mathbb{S} (\nabla_x \textup{\textbf{u}}_{\varepsilon, n}): \nabla_x \textup{\textbf{u}}_{\varepsilon, n}  \ \textup{d}x - \varepsilon \int_{\Omega} P''(\varrho_{\varepsilon, n})|\nabla_x \varrho_{\varepsilon, n}|^2 \ \textup{d}x \\
		&- \int_{\Omega} \mathbb{K}(\varrho_{\varepsilon, n}, \nabla_x \varrho_{\varepsilon, n}, \nabla_x^2 \varrho_{\varepsilon, n}) : \nabla_x \textup{\textbf{u}}_{\varepsilon, n} \ \textup{d}x.
		\end{aligned}
	\end{equation}
	Moreover, we have
	\begin{align*}
	- \int_{\Omega} \mathbb{K}&(\varrho_{\varepsilon, n}, \nabla_x \varrho_{\varepsilon, n}, \nabla_x^2 \varrho_{\varepsilon, n}) : \nabla_x \textup{\textbf{u}}_{\varepsilon, n} \ \textup{d}x = \int_{\Omega} \divv_x \big[  \mathbb{K}(\varrho_{\varepsilon, n}, \nabla_x \varrho_{\varepsilon, n}, \nabla_x^2 \varrho_{\varepsilon, n}) \big] \cdot \textup{\textbf{u}}_{\varepsilon, n} \ \textup{d}x \\
	&= \frac{\hbar}{2}\int_{\Omega} \varrho_{\varepsilon, n} \nabla_x \left( \frac{\Delta_x \sqrt{\varrho_{\varepsilon, n}}}{\sqrt{\varrho_{\varepsilon, n}}} \right) \cdot \textbf{u}_{\varepsilon, n}  \ \textup{d}x = \frac{\hbar}{2} \int_{\Omega}  \frac{\Delta_x \sqrt{\varrho_{\varepsilon, n}}}{\sqrt{\varrho_{\varepsilon, n}}} \  \divv_x(\varrho_{\varepsilon, n} \textbf{u}_{\varepsilon, n}) \ \textup{d}x \\
	&= \frac{\hbar}{4} \int_{\Omega} \left( \frac{1}{\varrho_{\varepsilon, n}} \Delta_x \varrho_{\varepsilon, n} -\frac{1}{2 \varrho_{\varepsilon, n}^2}| \nabla_x \varrho_{\varepsilon, n}|^2 \right) \  (\partial_t \varrho_{\varepsilon, n} -\varepsilon \Delta_x \varrho_{\varepsilon, n}) \ \textup{d}x \\
	&= - \frac{\textup{d}}{\textup{d}t} \int_{\Omega} \frac{\hbar}{2}  |\nabla_x \sqrt{\varrho_{\varepsilon, n}}|^2 \ \textup{d}x -  \varepsilon \frac{\hbar}{4} \int_{\Omega} \varrho_{\varepsilon, n} \  |\nabla_x^2(\log \varrho_{\varepsilon, n})|^2\ \textup{d}x,
	\end{align*}
	where we used formulas
	\begin{align*}
		\Delta_x f(\varrho) &= f'(\varrho) \Delta_x \varrho + f''(\varrho) |\nabla_x \varrho|^2, \\
		|\nabla_x^2 f(\varrho)|^2 &= \frac{1}{2} \Delta_x |\nabla_x f(\varrho)|^2 - \nabla_x f(\varrho) \cdot \nabla_x \Delta_x f(\varrho),
	\end{align*}
	to write
	\begin{align*}
	\int_{\Omega} \left( \frac{1}{\varrho} \Delta_x \varrho -\frac{1}{2 \varrho^2}| \nabla_x \varrho|^2 \right) \Delta_x \varrho \ \textup{d}x &= \int_{\Omega} \Delta_x (\log \varrho) \  \Delta_x \varrho \ \textup{d}x +  \frac{1}{2} \int_{\Omega} \left| \nabla_x  (\log \varrho) \right|^2 \Delta_x \varrho \ \textup{d}x \\
	&=  \int_{\Omega} \varrho \left(-\nabla_x \Delta_x (\log \varrho) \cdot \nabla_x(\log \varrho) + \frac{1}{2} \Delta_x|\nabla_x (\log \varrho)|^2\right) \ \textup{d}x \\
	&=  \int_{\Omega} \varrho \  |\nabla_x^2 (\log \varrho)|^2 \ \textup{d}x.
	\end{align*}
	We have finally obtained
	\begin{align*}
		\frac{\textup{d}}{\textup{d}t}\int_{\Omega} &\left(\frac{1}{2} \varrho_{\varepsilon, n} |\textbf{u}_{\varepsilon, n}|^2 + P(\varrho_{\varepsilon, n}) + \frac{\hbar}{2} |\nabla_x \sqrt{\varrho_{\varepsilon, n}}|^2\right)  \ \textup{d}x \\
		&+ \int_{\Omega} \mathbb{S} (\nabla_x \textup{\textbf{u}}_{\varepsilon, n}): \nabla_x \textup{\textbf{u}}_{\varepsilon, n}  \ \textup{d}x \\
		&+ \varepsilon \int_{\Omega} \left(P''(\varrho_{\varepsilon, n})|\varrho_{\varepsilon, n} \textbf{v}_{\varepsilon, n}|^2 + \frac{\hbar}{4} \ \varrho_{\varepsilon, n}|\nabla_x^2 (\log \varrho_{\varepsilon,n})|^2\right) \ \textup{d}x=0.
	\end{align*}
	Integrating the previous expression over $(0,\tau)$, we get the following energy inequality
	\begin{equation} \label{second approximate energy equality}
	\begin{aligned}
	\int_{\Omega} &\left[ \frac{1}{2} \varrho_{\varepsilon, n} |\textbf{u}_{\varepsilon, n}|^2 +P(\varrho_{\varepsilon, n}) + \frac{\hbar}{2}\ |\nabla_x \sqrt{\varrho_{\varepsilon, n}}|^2\right](\tau, \cdot) \ \textup{d}x \\
	&+ \int_{0}^{\tau}\int_{\Omega} \mathbb{S}(\nabla_x \textbf{u}_{\varepsilon, n}): \nabla_x \textbf{u}_{\varepsilon, n} \ \textup{d}x \textup{d}t  \\
	&+ \varepsilon \int_{0}^{\tau} \int_{\Omega} \left(P''(\varrho_{\varepsilon, n}) |\nabla_x \varrho_{\varepsilon, n}|^2 + \frac{\hbar}{4}\varrho_{\varepsilon, n}  \  |\nabla_x^2 (\log \varrho_{\varepsilon, n}) |^2 \right) \textup{d}x \textup{d}t \\
	&\leq \int_{\Omega} \left[\frac{1}{2} \frac{|\textbf{J}_0|^2}{\varrho_{0,n}}+ P(\varrho_{0,n})+\frac{\hbar}{2}\ |\nabla_x \sqrt{\varrho_{0,n}}|^2\right]\textup{d}x,
	\end{aligned}
	\end{equation}
	for any time $\tau \in [0,T(n)]$. In particular, if we suppose that
	\begin{equation} \label{initial energies independent of n}
	\int_{\Omega} \left[\frac{1}{2} \frac{|\textbf{J}_0|^2}{\varrho_{0,n}} +P(\varrho_{0,n}) + \frac{\hbar}{2}\ |\nabla_x \sqrt{\varrho_{0,n}}|^2\right] \textup{d}x \leq E_0
	\end{equation}
	where the constant $E_0$ is independent of $n>0$, the term on the left-hand side of \eqref{second approximate energy equality} is bounded. Consequently, it is not difficult to show that the functions $\textbf{u}_{\varepsilon, n}(t, \cdot)$ remain bounded in $X_n$ for any $t$ independently of $T(n)\leq T$. Thus we are allowed to iterate the previous local existence result to construct a solution defined on the whole time interval $[0,T]$, see e.g. the last part of Section 7.3.4 in \cite{Fei1} for more details.
	
	Summarizing, so far we proved the following result.
	
	\begin{lemma} \label{existence in epsilon n}
		For every fixed $\varepsilon>0$, $n\in \mathbb{N}$, and any $\varrho_{0,n} \in W^{1,2}(\Omega)$ such that
		\begin{equation*}
		\int_{\Omega} \left[\frac{1}{2} \frac{|\textup{\textbf{J}}_0|^2}{\varrho_{0,n}} +P(\varrho_{0,n}) + \frac{\hbar}{2}\ |\nabla_x \sqrt{\varrho_{0,n}}|^2\right] \textup{d}x \leq E_0,
		\end{equation*}
		where the constant $E_0$ is independent of $n$, there exist
		\begin{align*}
		\varrho_{\varepsilon, n} &\in L^2((0,T); W^{2,2}(\Omega)) \cap W^{1,2}(0,T;L^2(\Omega)), \\
		\textup{\textbf{u}}_{ \varepsilon, n} &\in C([0,T]; X_n),
		\end{align*}
		such that
		\begin{itemize}
			\item[(i)] the integral identity
			\begin{equation} \label{weak formulation continuity equation epsilon}
			\left[\int_{\Omega} \varrho_{\varepsilon, n} \varphi (t, \cdot) \ \textup{d}x\right]_{t=0}^{t=\tau} = \int_{0}^{\tau} \int_{\Omega} (\varrho_{\varepsilon, n} \partial_t \varphi +\varrho_{\varepsilon, n} \textup{\textbf{u}}_{\varepsilon, n} \cdot \nabla_x \varphi -\varepsilon \nabla_x \varrho_{\varepsilon, n} \cdot \nabla_x \varphi ) \ \textup{d} x \textup{d}t
			\end{equation}
			holds for any $\tau \in [0,T]$ and any $\varphi \in C^1([0,T]\times \overline{\Omega})$, with $\varrho_{\varepsilon, n}(0, \cdot)=\varrho_{0,n}$;
			\item[(ii)] the integral identity
			\begin{equation} \label{weak formulation momentum equation epsilon}
			\begin{aligned}
			&\left[\int_{\Omega}\varrho_{\varepsilon, n} \textup{\textbf{u}}_{\varepsilon, n} \cdot \bm{\varphi}(t, \cdot)\ \textup{d}x\right]_{t=0}^{t=\tau}  \\
			&\hspace{2cm}= \int_{0}^{\tau}\int_{\Omega} \left[ \varrho_{\varepsilon, n}\textup{\textbf{u}}_{\varepsilon, n} \cdot \partial_t\bm{\varphi}+(\varrho_{\varepsilon, n} \textup{\textbf{u}}_{\varepsilon, n} \otimes \textup{\textbf{u}}_{\varepsilon, n}) : \nabla_x \bm{\varphi} +p(\varrho_{\varepsilon, n})\divv_x \bm{\varphi} \right] \textup{d}x\textup{d}t \\
			&\hspace{2cm}+ \frac{\hbar}{4}  \int_{0}^{\tau}\int_{\Omega} \big[ \nabla_x \varrho_{\varepsilon, n} \cdot \divv_x \nabla_x^{\top}\bm{\varphi} + 4 (\nabla_x \sqrt{\varrho_{\varepsilon, n}} \otimes \nabla_x \sqrt{\varrho_{\varepsilon, n}}): \nabla_x \bm{\varphi} \big] \textup{d}x \textup{d}t  \\
			&\hspace{2cm}-\int_{0}^{\tau}\int_{\Omega}\mathbb{S} (\nabla_x \textup{\textbf{u}}_{\varepsilon, n}): \nabla_x \bm{\varphi} \ \textup{d}x\textup{d}t -\varepsilon \int_{0}^{\tau}\int_{\Omega} \nabla_x \varrho_{\varepsilon, n}\cdot \nabla_x \textup{\textbf{u}}_{\varepsilon, n} \cdot \bm{\varphi} \ \textup{d}x\textup{d}t
			\end{aligned}
			\end{equation}
			holds for any $\tau \in [0,T]$ and any $\bm{\varphi} \in C^1([0,T]; X_n)$, with $(\varrho_{\varepsilon, n} \textup{\textbf{u}}_{\varepsilon, n})(0, \cdot)= \textup{\textbf{J}}_0$;
			\item[(iii)] the integral inequality
			\begin{equation} \label{energy inequality in epsilon and n}
			\begin{aligned}
				\int_{\Omega} &\left[ \frac{1}{2} \varrho_{\varepsilon, n} |\textbf{\textup{u}}_{\varepsilon, n}|^2 +P(\varrho_{\varepsilon, n}) + \frac{\hbar}{2}\ |\nabla_x \sqrt{\varrho_{\varepsilon, n}}|^2\right](\tau, \cdot) \ \textup{d}x \\
				&+ \int_{0}^{\tau}\int_{\Omega} \mathbb{S}(\nabla_x \textbf{\textup{u}}_{\varepsilon, n}): \nabla_x \textbf{\textup{u}}_{\varepsilon, n} \ \textup{d}x \textup{d}t  \\
				&+ \varepsilon \int_{0}^{\tau} \int_{\Omega} \left(P''(\varrho_{\varepsilon, n}) |\nabla_x \varrho_{\varepsilon, n}|^2 + \frac{\hbar}{4}\varrho_{\varepsilon, n}  \  |\nabla_x^2 (\log \varrho_{\varepsilon, n}) |^2 \right) \textup{d}x \textup{d}t \\
				&\leq \int_{\Omega} \left[\frac{1}{2} \frac{|\textbf{\textup{J}}_0|^2}{\varrho_{0,n}}+ P(\varrho_{0,n})+\frac{\hbar}{2}\ |\nabla_x \sqrt{\varrho_{0,n}}|^2\right]\textup{d}x,
			\end{aligned}
			\end{equation}
			holds for any time $\tau \in [0,T]$.
		\end{itemize}
	\end{lemma}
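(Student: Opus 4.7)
The plan is to stitch together the ingredients developed in the two preceding subsections. The proof consists of (a) constructing the pair $[\varrho_{\varepsilon,n},\textbf{u}_{\varepsilon,n}]$ on a short time interval $[0,T(n)]$ via the Banach--Cacciopoli fixed point argument already described, (b) deriving the energy inequality (iii) at the Galerkin level, and (c) using that inequality to iterate the local existence up to the prescribed time $T$.

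For step (a), I would fix $\textbf{u}\in\mathcal{B}(0,\underline{n}K)\subset C([0,T(n)];X_n)$, produce the unique approximated density $\varrho[\textbf{u}]$ via Lemma \ref{existence approximated densities}, and feed it into the operator $\mathscr{F}$: the previous lemma tells us $\mathscr{F}$ is a contraction provided $K$ is large enough and $T(n)$ small enough, so its unique fixed point is the Galerkin velocity satisfying \eqref{projection momentum equation to finite-dimesional space}, equivalent to \eqref{weak formulation momentum equation epsilon}. The weak continuity equation \eqref{weak formulation continuity equation epsilon} comes for free from the regularity provided by Lemma \ref{existence approximated densities}: I would test \eqref{approximation continuity equation} against $\varphi\in C^1([0,T]\times\overline{\Omega})$, integrate by parts in $x$, and absorb the boundary term using the Neumann condition \eqref{boundary conditions approximated system}.

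For step (b), I would test the Galerkin equation \eqref{projection velocities} with the admissible function $\bm{\psi}=\textbf{u}_{\varepsilon,n}(t,\cdot)\in X_n$, producing a kinetic-energy balance in which the term containing $\partial_t\varrho_{\varepsilon,n}+\divv_x(\varrho_{\varepsilon,n}\textbf{u}_{\varepsilon,n})-\varepsilon\Delta_x\varrho_{\varepsilon,n}$ vanishes via \eqref{approximation continuity equation}. To turn $p(\varrho_{\varepsilon,n})\divv_x\textbf{u}_{\varepsilon,n}$ into a full time derivative I would multiply \eqref{approximation continuity equation} by $P'(\varrho_{\varepsilon,n})$ and integrate by parts, which generates the nonnegative BD-type dissipation $\varepsilon P''(\varrho_{\varepsilon,n})|\nabla_x\varrho_{\varepsilon,n}|^2$. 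For the capillarity contribution, the key identity is the one already displayed before the lemma, obtained by rewriting $\mathbb{K}$ through the Bohm potential, integrating by parts to bring $\divv_x(\varrho_{\varepsilon,n}\textbf{u}_{\varepsilon,n})=\partial_t\varrho_{\varepsilon,n}-\varepsilon\Delta_x\varrho_{\varepsilon,n}$ in contact with $\Delta_x\sqrt{\varrho_{\varepsilon,n}}/\sqrt{\varrho_{\varepsilon,n}}$, and finally using the two calculus identities $\Delta_x f(\varrho)=f'(\varrho)\Delta_x\varrho+f''(\varrho)|\nabla_x\varrho|^2$ and $|\nabla_x^2 f(\varrho)|^2=\tfrac12\Delta_x|\nabla_x f(\varrho)|^2-\nabla_x f(\varrho)\cdot\nabla_x\Delta_x f(\varrho)$ with $f=\log$. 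Summing the three balances and integrating over $(0,\tau)$ yields exactly \eqref{energy inequality in epsilon and n}, with the right-hand side bounded by $E_0$ by assumption \eqref{initial energies independent of n}.

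For step (c), \eqref{energy inequality in epsilon and n} provides a bound on $\|\sqrt{\varrho_{\varepsilon,n}}\,\textbf{u}_{\varepsilon,n}\|_{L^2(\Omega)}^2$ that does not depend on $T(n)\leq T$; combining this with the lower bound \eqref{bound below density} on $\varrho_{\varepsilon,n}$ and the equivalence \eqref{connection norm in Xn and W} of norms on the finite-dimensional space $X_n$ yields a uniform control of $\|\textbf{u}_{\varepsilon,n}(t,\cdot)\|_{X_n}$, which is exactly what is needed to restart the fixed-point construction at $t=T(n)$ without loss. A standard continuation argument (as in Section 7.3.4 of \cite{Fei1}) then extends the solution to the full interval $[0,T]$. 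I expect the subtlest point to be this closure of the iteration: the local existence time $T(n)$ depends on $K$, which bounds $\|\divv_x\textbf{u}_{\varepsilon,n}\|_{L^\infty}$ and hence the density bounds \eqref{bound above density}--\eqref{bound below density}, so one has to argue that the energy estimate propagates through successive restarts in a quantitative way keeping $K$ (and thus $T(n)$) bounded below; this is ultimately possible only because $X_n$ is finite-dimensional, so the $L^2$ control coming from the energy translates into a $W^{1,\infty}$ control at the price of an $n$-dependent, but $t$-independent, constant.
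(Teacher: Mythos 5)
Your proposal follows essentially the same route as the paper: local existence via the Banach--Cacciopoli contraction argument on $\mathcal{B}(0,\underline{n}K)$, the energy balance obtained by testing \eqref{projection velocities} with $\bm{\psi}=\textbf{u}_{\varepsilon,n}$ together with multiplication of \eqref{approximation continuity equation} by $P'(\varrho_{\varepsilon,n})$ and the Bohm-potential integration-by-parts identity yielding the $\varepsilon$-weighted dissipative terms, followed by a finite-dimensional continuation argument citing Feireisl's book. You even correctly flag the one genuinely delicate point --- that the uniform $X_n$-bound needed to iterate is closed only because the $L^2$ energy control upgrades to $W^{1,\infty}$ via the $n$-dependent norm equivalence \eqref{connection norm in Xn and W} --- which the paper itself only gestures at by referring to Section 7.3.4 of \cite{Fei1}.
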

	
	\subsubsection{Limit $\varepsilon \rightarrow 0$}
	
	In order to perform the limit $\varepsilon \rightarrow 0$, we need the following result.
	
	\begin{lemma} \label{convergences epsilon }
		Let $n\in \mathbb{N}$ be fixed and let $\{ \varrho_{\varepsilon,n}, \textup{\textbf{u}}_{\varepsilon,n} \}_{\varepsilon>0}$ be as in Lemma \ref{existence in epsilon n}. Then, passing to suitable subsequences as the case may be, the following convergences hold as $\varepsilon \rightarrow 0$.
		\begin{align}
		\varrho_{\varepsilon,n} \overset{*}{\rightharpoonup} \varrho_n \quad &\mbox{in } L^{\infty}((0,T)\times \Omega) \label{convergence densities epsilon}\\[0.1cm]
		\textup{\textbf{u}}_{\varepsilon,n} \overset{*}{\rightharpoonup} \textup{\textbf{u}}_n \quad &\mbox{in } L^{\infty}(0,T; W^{1,\infty}(\Omega; \mathbb{R}^d)), \label{convergence velocities epsilon} \\[0.1cm]
		\varrho_{\varepsilon,n}\textup{\textbf{u}}_{\varepsilon,n} \overset{*}{\rightharpoonup} \varrho_n \textup{\textbf{u}}_n\quad &\mbox{in } L^{\infty}((0,T)\times \Omega; \mathbb{R}^d), \label{convergence momenta epsilon} \\[0.1cm]
		\varrho_{\varepsilon,n}\textup{\textbf{u}}_{\varepsilon,n} \otimes \textup{\textbf{u}}_{\varepsilon,n} \overset{*}{\rightharpoonup} \varrho_n \textup{\textbf{u}}_n \otimes \textup{\textbf{u}}_n \quad &\mbox{in } L^{\infty}((0,T)\times \Omega; \mathbb{R}^{d\times d}), \label{convergence convective terms epsilon}\\[0.1cm]
		\nabla_x \varrho_{\varepsilon,n} \overset{*}{\rightharpoonup} \nabla_x \varrho_n \quad &\mbox{in } L^{\infty}(0,T;L^2(\Omega)), \label{convergence gradient density}\\[0.1cm]
		p(\varrho_{\varepsilon,n}) \overset{*}{\rightharpoonup} \overline{p(\varrho_n)} \quad &\mbox{in } L^{\infty}(0,T; \mathcal{M}(\overline{\Omega})), \label{convergence pressure}\\[0.1cm]
		\nabla_x \sqrt{\varrho_{\varepsilon,n}} \otimes 	\nabla_x \sqrt{\varrho_{\varepsilon,n}} \overset{*}{\rightharpoonup} \overline{\nabla_x \sqrt{\varrho_n} \otimes \nabla_x \sqrt{\varrho_n}} \quad &\mbox{in } L^{\infty}(0,T; \mathcal{M}(\overline{\Omega}; \mathbb{R}^{d\times d})), \label{convergence tensor product sqrt density}\\[0.1cm]
		\sqrt{\varepsilon}\  \nabla_x \varrho_{\varepsilon,n}\rightharpoonup \sqrt{\varepsilon}\  \nabla_x \varrho_n \quad &\mbox{in }L^2((0,T) \times \Omega; \mathbb{R}^d), \label{convergence f epsilon}\\[0.1cm]
		\sqrt{\varepsilon}\  \nabla_x \varrho_{\varepsilon,n} \cdot \nabla_x  \textup{\textbf{u}}_{\varepsilon,n} \rightharpoonup \sqrt{\varepsilon} \  \overline{ \nabla_x \varrho_n \cdot \nabla_x  \textup{\textbf{u}}}_n \quad &\mbox{in } L^2((0,T) \times \Omega; \mathbb{R}^d). \label{convergence g epsilon}
		\end{align}
	\end{lemma}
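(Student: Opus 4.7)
Under the standing assumption \eqref{initial energies independent of n}, the energy inequality \eqref{energy inequality in epsilon and n} immediately furnishes uniform-in-$\varepsilon$ bounds for $\sqrt{\varrho_{\varepsilon,n}}\,\textbf{u}_{\varepsilon,n}$ in $L^\infty(0,T;L^2(\Omega;\mathbb{R}^d))$, for $\nabla_x\sqrt{\varrho_{\varepsilon,n}}$ in $L^\infty(0,T;L^2(\Omega;\mathbb{R}^d))$, for $\varrho_{\varepsilon,n}^\gamma$ in $L^\infty(0,T;L^1(\Omega))$, and (by Korn's inequality together with the no-slip boundary condition) for $\nabla_x\textbf{u}_{\varepsilon,n}$ in $L^2((0,T)\times\Omega;\mathbb{R}^{d\times d})$, together with the artificial-viscosity bounds on $\sqrt{\varepsilon}\,\nabla_x\varrho_{\varepsilon,n}$ and $\sqrt{\varepsilon}\,\sqrt{\varrho_{\varepsilon,n}}\,\nabla_x^2\log\varrho_{\varepsilon,n}$ in $L^2((0,T)\times\Omega)$. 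I now exploit the finite-dimensionality of $X_n$: the norm equivalence \eqref{connection norm in Xn and W} converts the $L^2_{t,x}$-bound on $\nabla_x\textbf{u}_{\varepsilon,n}$ into $\divv_x\textbf{u}_{\varepsilon,n} \in L^2(0,T;L^\infty(\Omega))$ uniformly in $\varepsilon$, which when plugged into the maximum principle of Lemma \ref{existence approximated densities} yields two-sided bounds $0 < \underline{c}_n \leq \varrho_{\varepsilon,n} \leq \overline{C}_n < \infty$ independent of $\varepsilon$. The strict positivity, combined with the kinetic-energy bound, gives $\textbf{u}_{\varepsilon,n} \in L^\infty(0,T;L^2(\Omega;\mathbb{R}^d))$, which the norm equivalence promotes to $\textbf{u}_{\varepsilon,n} \in L^\infty(0,T;W^{1,\infty}(\Omega;\mathbb{R}^d))$, again uniformly in $\varepsilon$.

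\textbf{Step 2: Weak-$*$ and strong compactness.} The weak-$*$ convergences \eqref{convergence densities epsilon}, \eqref{convergence velocities epsilon}, \eqref{convergence gradient density}, \eqref{convergence pressure}, \eqref{convergence tensor product sqrt density}, \eqref{convergence f epsilon} all follow from Banach--Alaoglu applied to the uniform bounds of Step 1; for \eqref{convergence gradient density} I additionally use the identity $\nabla_x\varrho_{\varepsilon,n} = 2\sqrt{\varrho_{\varepsilon,n}}\,\nabla_x\sqrt{\varrho_{\varepsilon,n}}$ together with the $L^\infty$-bound on $\varrho_{\varepsilon,n}$. To upgrade to strong compactness of the density, observe that $\varrho_{\varepsilon,n}$ is uniformly bounded in $L^\infty(0,T;W^{1,2}(\Omega))$, while the continuity equation \eqref{approximation continuity equation} together with the bounds already established gives $\partial_t\varrho_{\varepsilon,n}$ uniformly bounded in $L^2(0,T;W^{-1,2}(\Omega))$ (the artificial viscosity contribution $\varepsilon\Delta_x\varrho_{\varepsilon,n} = \sqrt{\varepsilon}\,\divv_x(\sqrt{\varepsilon}\,\nabla_x\varrho_{\varepsilon,n})$ enters as the divergence of an $L^2_{t,x}$-field). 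Aubin--Lions then delivers $\varrho_{\varepsilon,n} \to \varrho_n$ strongly in $C([0,T];L^q(\Omega))$ for every $q < \infty$. For the velocity, testing the weak momentum identity \eqref{weak formulation momentum equation epsilon} against $\bm{\psi} \in X_n$ and using the bounds from Step 1 together with the strict positivity of $\varrho_{\varepsilon,n}$ provides a uniform bound on $\partial_t\textbf{u}_{\varepsilon,n}$ in $L^2(0,T;X_n)$; Arzel\`{a}--Ascoli in the finite-dimensional space $X_n$ then yields $\textbf{u}_{\varepsilon,n} \to \textbf{u}_n$ strongly in $C([0,T];W^{1,\infty}(\Omega;\mathbb{R}^d))$.

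\textbf{Step 3: Identification of the products.} The momentum convergence \eqref{convergence momenta epsilon} and the convective-term convergence \eqref{convergence convective terms epsilon} follow by pairing the strong $C_t L^q_x$-convergence of $\varrho_{\varepsilon,n}$ with the strong $C_t L^\infty_x$-convergence of $\textbf{u}_{\varepsilon,n}$ from Step 2. Convergence \eqref{convergence g epsilon} is then obtained by combining the weak $L^2$-convergence of $\sqrt{\varepsilon}\,\nabla_x\varrho_{\varepsilon,n}$ from \eqref{convergence f epsilon} with the uniform $L^\infty_{t,x}$-bound on $\nabla_x\textbf{u}_{\varepsilon,n}$ granted by finite-dimensionality. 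The main technical hurdle is the bootstrap of Step 1: the uniform-in-$\varepsilon$ strict positivity of $\varrho_{\varepsilon,n}$ cannot be extracted from the energy inequality alone and relies crucially on the finite-dimensionality of $X_n$ via \eqref{connection norm in Xn and W} and Lemma \ref{existence approximated densities}. This same mechanism will be unavailable in the subsequent limit $n \to \infty$, where a different argument becomes necessary.
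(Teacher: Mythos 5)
Your proof is correct and follows the same basic architecture as the paper's: extract uniform bounds from the energy inequality \eqref{energy inequality in epsilon and n}, bootstrap the $L^2_t W^{1,2}_x$-velocity bound through the norm equivalence \eqref{connection norm in Xn and W} and the maximum principle of Lemma \ref{existence approximated densities} to get two-sided density bounds uniform in $\varepsilon$, and then apply Banach--Alaoglu. The main divergence is in how the product limits are identified. The paper is economical: it strengthens the density convergence only to $C_{\textup{weak}}([0,T];L^p(\Omega))$ (via equicontinuity coming directly from the weak continuity equation and Arzel\`a--Ascoli in the weak topology), then upgrades to strong convergence in $C([0,T];W^{-1,1}(\Omega))$ via the compact embedding $L^p\hookrightarrow\hookrightarrow W^{-1,1}$, and pairs this with the \emph{weak-$*$} convergence of $\textbf{u}_{\varepsilon,n}$ in $L^\infty(0,T;W^{1,\infty})$ to identify $\overline{\varrho_n\textbf{u}_n}=\varrho_n\textbf{u}_n$; no time-derivative bound on $\textbf{u}_{\varepsilon,n}$ is ever needed. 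You instead go for full strong compactness of both state variables: Aubin--Lions for $\varrho_{\varepsilon,n}$ in $C([0,T];L^q)$ (which is fine, since $\varrho_{\varepsilon,n}$ is uniformly bounded in $L^\infty_t W^{1,2}_x$ and $\partial_t\varrho_{\varepsilon,n}$ in $L^2_t W^{-1,2}_x$), and Arzel\`a--Ascoli for $\textbf{u}_{\varepsilon,n}$ in $C([0,T];X_n)$. The latter is where your proof is heavier than the paper's and also the least rigorous: to bound $\partial_t\textbf{u}_{\varepsilon,n}$ in $L^2(0,T;X_n)$ you must differentiate the identity $\textbf{u}_{\varepsilon,n}=\mathscr{M}^{-1}[\varrho_{\varepsilon,n}](\varrho_{\varepsilon,n}\textbf{u}_{\varepsilon,n})$ in time, which involves $\partial_t\mathscr{M}^{-1}[\varrho_{\varepsilon,n}]$ and hence a pairing of $\partial_t\varrho_{\varepsilon,n}\in L^2_t W^{-1,2}_x$ against products $\textbf{w}_i\cdot\textbf{w}_j\in W^{1,\infty}$; this works and is uniform in $\varepsilon$ because $\varrho_{\varepsilon,n}$ is bounded above and below, but it deserves to be spelled out rather than asserted. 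What your stronger compactness buys you is more than you actually use: for instance, strong convergence of $\varrho_{\varepsilon,n}$ in $C([0,T];L^q)$ for all $q<\infty$ together with the uniform $L^\infty$-bound would already identify $\overline{p(\varrho_n)}=p(\varrho_n)$, so the pressure defect in $\mathfrak{R}_n$ is actually trivial at this stage of the construction (it only becomes genuinely nontrivial in the limit $n\to\infty$); this is not an error on either side, since the bar is just notation for the weak-$*$ limit, but it is worth noticing. One small stylistic point: Korn's inequality is not needed to bound $\nabla_x\textbf{u}_{\varepsilon,n}$ in $L^2$; with constant coefficients, $\mu>0$, $\lambda\geq 0$ and the no-slip condition one gets $\int_\Omega\mathbb{S}(\nabla_x\textbf{u}):\nabla_x\textbf{u}\,\textup{d}x\geq\mu\int_\Omega|\nabla_x\textbf{u}|^2\,\textup{d}x$ directly by integration by parts.
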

	
	\begin{proof}
		From \eqref{energy inequality in epsilon and n} it is easy to deduce the following uniform bounds
		\begin{align}
		\| P(\varrho_{\varepsilon,n}) \|_{L^{\infty}(0,T; L^1(\Omega))} &\leq c(\overline{E}) \label{unform bound pressure potential}\\
		\| \nabla_x \sqrt{\varrho_{\varepsilon,n}}\|_{L^{\infty}(0,T; L^2(\Omega; \mathbb{R}^d) )} &\leq c(\overline{E}) \label{uniform bound gradient sqr density} \\
		\|  \nabla_x \textbf{u}_{\varepsilon,n}\|_{L^2((0,T) \times \Omega; \mathbb{R}^{d\times d})} &\leq c(\overline{E}) \label{uniform bound velocities}.
		\end{align}
		Estimate \eqref{uniform bound velocities} combined with the Poincar\'{e} inequality provides
		\begin{equation*}
		\|\textbf{u}_{\varepsilon,n}\|_{L^2(0,T; W^{1,2}(\Omega; \mathbb{R}^d))}\leq c_1
		\end{equation*}
		for some positive constant $c_1$ independent of $\varepsilon>0$. Applying Lemma \ref{existence approximated densities}, we get
		\begin{equation} \label{bound densities}
		e^{-c_1T} \underline{\varrho}_n \leq \varrho_{\varepsilon,n}(t,x) \leq e^{c_1T} \overline{\varrho}_n, \quad \mbox{for all }(t,x) \in [0,T] \times \overline{\Omega}.
		\end{equation}
		which yields to convergence \eqref{convergence densities epsilon}. From the fact that $\textbf{u}_{\varepsilon,n}$ belongs to $C([0,T]; X_n)$, it is easy to deduce
		\begin{equation} \label{bound velocities}
		\sup_{t\in [0,T]} \| \textbf{u}_{\varepsilon,n}(t,\cdot)\|_{W^{1,\infty}(\Omega; \mathbb{R}^d)} \leq c_2,
		\end{equation}
		from which convergence \eqref{convergence velocities epsilon} follows. Combining \eqref{bound densities} and \eqref{bound velocities}, we can recover
		\begin{equation*}
		\varrho_{\varepsilon,n}\textbf{u}_{\varepsilon,n} \overset{*}{\rightharpoonup} \overline{\varrho_n \textbf{u}_n} \quad \mbox{in } L^{\infty}((0,T)\times \Omega; \mathbb{R}^d).
		\end{equation*}
		Now, notice that \eqref{convergence densities epsilon} can be strengthened to
		\begin{equation*}
		\varrho_{\varepsilon,n} \rightarrow \varrho_n \quad \mbox{in } C_{\textup{weak}} ([0,T]; L^p(\Omega)) \quad \mbox{for all } 1<p<\infty
		\end{equation*}
		as $\varepsilon \rightarrow 0$, so that, relaying on the compact Sobolev embedding
		\begin{equation*}
		L^p(\Omega) \hookrightarrow \hookrightarrow W^{-1,1}(\Omega) \quad \mbox{for all } p\geq 1,
		\end{equation*}
		we obtain
		\begin{equation*}
		\varrho_{\varepsilon,n} \rightarrow \varrho_n \quad \mbox{in } C([0,T]; W^{-1,1}(\Omega))
		\end{equation*}
		as $\varepsilon \rightarrow 0$. The last convergence combined with \eqref{convergence velocities epsilon}, implies
		\begin{equation*}
			\overline{\varrho_n \textbf{u}_n}= \varrho_n \textbf{u}_n  \quad \mbox{a.e. in }(0,T) \times \Omega,
		\end{equation*}
		and thus, we get \eqref{convergence momenta epsilon}. Similarly, from \eqref{convergence velocities epsilon} and \eqref{convergence momenta epsilon} we can deduce \eqref{convergence convective terms epsilon}. Noticing that
		\begin{equation} \label{gradient rho}
		\nabla_x \varrho_{\varepsilon,n} = 2 \sqrt{\varrho_{\varepsilon,n}} \ \nabla_x \sqrt{\varrho_{\varepsilon,n}},
		\end{equation}
		convergence \eqref{convergence gradient density} can be deduced combining \eqref{uniform bound gradient sqr density} and \eqref{bound densities}. From \eqref{unform bound pressure potential} and \eqref{uniform bound gradient sqr density}, we can deduce that the sequences $\{ p(\varrho_{\varepsilon,n}) \}_{\varepsilon >0}$, $\{ \nabla_x \sqrt{\varrho_{\varepsilon,n}} \otimes \nabla_x \sqrt{\varrho_{\varepsilon,n}} \}_{\varepsilon>0}$ are uniformly bounded in $L^{\infty}(0,T; L^1(\Omega))$. However, since the $L^1$-space cannot be identified as the dual space of any separable space and therefore it is not possible to apply the Banach-Alaoglu theorem, a suitable idea consists in the embedding of $L^1(\Omega)$ in the space of measures $\mathcal{M}(\overline{\Omega})$, which, on the contrary, is the dual space of $C(\overline{\Omega})$; we get convergences \eqref{convergence pressure}, \eqref{convergence tensor product sqrt density}. Finally, from \eqref{bound densities} and the energy inequality \eqref{energy inequality in epsilon and n}, we have
		\begin{equation*}
		\varepsilon \int_{0}^{\tau} \int_{\Omega} |\nabla_x \varrho_{\varepsilon,n}|^2 \  \textup{d}x \textup{d}t \leq  \varepsilon \ c(\underline{\varrho}_n) \int_{0}^{\tau} \int_{\Omega} P''(\varrho_{\varepsilon,n}) |\nabla_x \varrho_{\varepsilon,n}|^2 \ \textup{d}x \textup{d}t \leq c(\underline{\varrho}_n, T).
		\end{equation*}
		In this way we get \eqref{convergence f epsilon} and, in view of \eqref{bound velocities}, \eqref{convergence g epsilon}.
	\end{proof}
	
	We are now ready to let $\varepsilon \rightarrow 0$ in the weak formulations \eqref{weak formulation continuity equation epsilon}, \eqref{weak formulation momentum equation epsilon}; notice in particular that, in view of \eqref{convergence f epsilon} and \eqref{convergence g epsilon}, for any $\tau \in [0,T]$, any $\varphi \in C^1([0,T] \times \overline{\Omega})$ and any $\bm{\varphi} \in C^1([0,T]; X_n)$
	\begin{align*}
	\varepsilon \int_{0}^{\tau} \int_{\Omega} \nabla_x \varrho_{\varepsilon,n} \cdot \nabla_x \varphi \ \textup{d}x\textup{d}t &= \sqrt{\varepsilon} \int_{0}^{\tau} \int_{\Omega} \sqrt{\varepsilon} \ \nabla_x \varrho_{\varepsilon,n} \cdot \nabla_x \varphi \ \textup{d}x\textup{d}t \rightarrow 0, \\
	\varepsilon \int_{0}^{\tau}\int_{\Omega} \nabla_x \varrho_{\varepsilon,n}\cdot \nabla_x \textbf{u}_{\varepsilon,n} \cdot \bm{\varphi} \ \textup{d}x\textup{d}t &= \sqrt{\varepsilon} \int_{0}^{\tau}\int_{\Omega} \sqrt{\varepsilon} \ \nabla_x \varrho_{\varepsilon,n} \cdot \nabla_x  \textup{\textbf{u}}_{\varepsilon,n}\cdot \bm{\varphi} \ \textup{d}x\textup{d}t \rightarrow 0
	\end{align*}
	as $\varepsilon \rightarrow 0$. We therefore obtain that the weak formulations of the continuity equation \eqref{nsk weak formulation continuity equation} and balance of momentum \eqref{nsk balance of momentum} hold for any $\tau \in [0,T]$ and any $\varphi \in C^1([0,T]\times \overline{\Omega})$, $\bm{\varphi} \in C^1([0,T]; X_n)$, respectively, with the Reynolds stresses
	\begin{equation*}
	\mathfrak{R}_n \in L^{\infty} (0,T; \mathcal{M}(\overline{\Omega}; \mathbb{R}_{\textup{sym}}^{d\times d}))
	\end{equation*}
	such that
	\begin{equation*}
	\textup{d}\mathfrak{R}_n: = (\overline{p(\varrho_n)}- p(\varrho_n)) \mathbb{I} \ \textup{d}x + \hbar \ (\overline{\nabla_x \sqrt{\varrho_n} \otimes \nabla_x \sqrt{\varrho_n}} - \nabla_x \sqrt{\varrho_n} \otimes \nabla_x \sqrt{\varrho_n}) \ \textup{d}x.
	\end{equation*}
	We claim that $\mathfrak{R}_n$ are positive measure, meaning that
	\begin{equation} \label{positivity defect}
	\mathfrak{R}_n \in L^{\infty} (0,T; \mathcal{M}^+(\overline{\Omega}; \mathbb{R}_{\textup{sym}}^{d\times d}));
	\end{equation}
	more precisely, we have to show that for any $\bm{\xi} \in \mathbb{R}^d$ and any bounded open set $\mathcal{B} \subset \Omega$
	\begin{equation*}
	\mathfrak{R}_n: (\bm{\xi} \otimes \bm{\xi})\geq 0 \quad \mbox{in } \mathcal{D}' ((0,T) \times \mathcal{B}).
	\end{equation*}
 	We have
	\begin{equation*}
	\mathfrak{R}_n: (\bm{\xi} \otimes \bm{\xi}) = (\overline{p(\varrho_n)}- p(\varrho_n)) |\bm{\xi}|^2 + \hbar \ (\overline{\nabla_x \sqrt{\varrho_n} \otimes \nabla_x \sqrt{\varrho_n}} - \nabla_x \sqrt{\varrho_n} \otimes \nabla_x \sqrt{\varrho_n}): (\bm{\xi} \otimes \bm{\xi});
	\end{equation*}
	on the one hand, the first term on the right-hand side is non-negative due to the convexity of the function $\varrho \mapsto p(\varrho)$ and therefore $p(\varrho_n) \leq \overline{p(\varrho_n)}$ (see e.g. \cite{Fei}, Theorem 2.1.1); on the other hand, we have
	\begin{align*}
	&(\overline{\nabla_x \sqrt{\varrho_n} \otimes \nabla_x \sqrt{\varrho_n}} - \nabla_x \sqrt{\varrho_n} \otimes \nabla_x \sqrt{\varrho_n}): (\bm{\xi} \otimes \bm{\xi}) \\[0.1cm]
	& \hspace{1cm}= \lim_{\varepsilon \rightarrow 0} \left[ (\nabla_x \sqrt{\varrho_{\varepsilon,n}} \otimes \nabla_x \sqrt{\varrho_{\varepsilon,n}} ): (\bm{\xi} \otimes \bm{\xi}) \right] - (\nabla_x \sqrt{\varrho_n} \otimes \nabla_x \sqrt{\varrho_n}): (\bm{\xi} \otimes \bm{\xi}) \\[0.1cm]
	&\hspace{1cm}= \lim_{\varepsilon \rightarrow 0} |\nabla_x \sqrt{\varrho_{\varepsilon,n}} \cdot \bm{\xi}|^2 - |\nabla_x \sqrt{\varrho_n} \cdot \bm{\xi}|^2 = \overline{|\nabla_x \sqrt{\varrho_n} \cdot \bm{\xi}|^2} - |\nabla_x \sqrt{\varrho_n} \cdot \bm{\xi}|^2
	\end{align*}
	in $\mathcal{D}' ((0,T) \times \mathcal{B})$, where it is interesting to notice that the derivatives of the function $\varrho \mapsto f_{\bm{\xi}}(\varrho)= |\nabla_x \sqrt{\varrho} \cdot \bm{\xi}|^2$ are such that
	\begin{equation*}
	f^{(k)}_{\bm{\xi}} (\varrho) = (-1)^k \ \frac{k!}{\varrho^k} \  f_{\bm{\xi}} (\varrho) \quad \mbox{for any } k \in \mathbb{N};
	\end{equation*}
	in particular, $f_{\bm{\xi}}$ is convex for any fixed $\bm{\xi} \in \mathbb{R}^d$ and therefore $f_{\bm{\xi}}(\varrho_n) \leq \overline{f_{\bm{\xi}}(\varrho_n)} $. We get \eqref{positivity defect}.
	
	Similarly, we can pass to the limit in \eqref{energy inequality in epsilon and n} to get
	\begin{equation} \label{energy inequality with energy defect}
	\begin{aligned}
	&\int_{\Omega} \left[ \frac{1}{2} \varrho_n |\textbf{\textup{u}}_n|^2 +P(\varrho_n) + \frac{\hbar}{2}\ |\nabla_x \sqrt{\varrho_n}|^2\right](\tau, \cdot) \ \textup{d}x \\
	&\hspace{2cm}+ \int_{\overline{\Omega}} \textup{d} \mathfrak{E}_n(\tau) + \int_{0}^{\tau}\int_{\Omega} \mathbb{S}(\nabla_x \textbf{\textup{u}}_n): \nabla_x \textbf{\textup{u}}_n \ \textup{d}x \textup{d}t  \\
	&\hspace{2cm}\leq \int_{\Omega} \left[\frac{1}{2} \frac{|\textbf{\textup{J}}_0|^2}{\varrho_{0,n}}+ P(\varrho_{0,n})+\frac{\hbar}{2}\ |\nabla_x \sqrt{\varrho_{0,n}}|^2\right]\textup{d}x,
	\end{aligned}
	\end{equation}
	with
	\begin{equation*}
	\mathfrak{E}_n \in L^{\infty}(0,T; \mathcal{M}^+(\overline{\Omega}))
	\end{equation*}
	such that
	\begin{equation*}
	\textup{d}\mathfrak{E}_n = \left(\overline{P(\varrho_n)}- P(\varrho_n)\right) \textup{d}x + \frac{\hbar}{2} \left(\overline{|\nabla_x \sqrt{\varrho_n}|^2} -|\nabla_x \sqrt{\varrho_n}|^2\right) \textup{d}x.
	\end{equation*}
	Furthermore, introducing $\lambda=\lambda(d, \gamma)= \max \{ d(\gamma-1), 2 \}$, we obtain
	\begin{align*}
	\trace[\mathfrak{R}_n] &= d\left(\overline{p(\varrho_n)}- p(\varrho_n)\right) + \hbar \left( \lim_{\varepsilon \rightarrow 0} \trace[\nabla_x \sqrt{\varrho_{\varepsilon,n}} \otimes \nabla_x \sqrt{\varrho_{\varepsilon,n}} ] - \trace[\nabla_x \sqrt{\varrho_n} \otimes \nabla_x \sqrt{\varrho_n}] \right) \\
	&= d(\gamma-1) \left(\overline{P(\varrho_n)}- P(\varrho_n) \right) + \hbar \left(\overline{|\nabla_x \sqrt{\varrho_n}|^2} -|\nabla_x \sqrt{\varrho_n}|^2\right) \leq \lambda \mathfrak{E}_n
	\end{align*}
	and therefore, the energy inequality \eqref{energy inequality with energy defect} will still hold replacing $\mathfrak{E}_n$ with $\lambda^{-1} \trace[\mathfrak{R}_n]$.

	\begin{lemma} \label{limit epsilon to zero}
		For every fixed $n\in \mathbb{N}$, and any $\varrho_{0,n} \in C(\overline{\Omega})$ such that
		\begin{equation*}
		\int_{\Omega} \left[\frac{1}{2} \frac{|\textup{\textbf{J}}_0|^2}{\varrho_{0,n}} +P(\varrho_{0,n})+ \frac{\hbar}{2}\ |\nabla_x \sqrt{\varrho_{0,n}}|^2\right] \textup{d}x \leq E_0,
		\end{equation*}
		where the constant $E_0$ is independent of $n$, there exist
		\begin{align*}
		\varrho_n &\in L^{\infty}(0,T; L^{\infty}(\Omega) ), \\
		\textup{\textbf{u}}_n &\in C([0,T]; X_n),
		\end{align*}
		with
		\begin{equation*}
		\quad e^{-cT} \underline{\varrho}_n \leq \varrho_n(t,x) \leq e^{cT} \overline{\varrho}_n, \quad \mbox{for all }(t,x) \in [0,T] \times \overline{\Omega},
		\end{equation*}
		for a positive constant $c$, such that
		\begin{itemize}
			\item[(i)] the integral identity
			\begin{equation} \label{weak formulation continuity equation n}
			\left[\int_{\Omega} \varrho_n \varphi (t, \cdot) \ \textup{d}x\right]_{t=0}^{t=\tau} = \int_{0}^{\tau} \int_{\Omega} (\varrho_n \partial_t \varphi +\varrho_n \textup{\textbf{u}}_n\cdot \nabla_x \varphi ) \ \textup{d} x
			\end{equation}
			holds for any $\tau \in [0,T]$ and any $\varphi \in C^1([0,T]\times \overline{\Omega})$, with $\varrho_n(0, \cdot)=\varrho_{0,n}$;
			\item[(ii)] there exists
			\begin{equation*}
			\mathfrak{R}_n  \in L^{\infty} (0,T; \mathcal{M}^+(\overline{\Omega}; \mathbb{R}_{\textup{sym}}^{d\times d}))
			\end{equation*}
			such that the integral identity
			\begin{equation} \label{weak formulation balance of momentum n}
			\begin{aligned}
			\left[\int_{\Omega}\varrho_n \textup{\textbf{u}}_n \cdot \bm{\varphi}(t, \cdot)\ \textup{d}x\right]_{t=0}^{t=\tau}  &= \int_{0}^{\tau}\int_{\Omega} \left[ \varrho_n\textup{\textbf{u}}_n \cdot \partial_t\bm{\varphi}+(\varrho_n \textup{\textbf{u}}_n \otimes \textup{\textbf{u}}_n) : \nabla_x \bm{\varphi} +p(\varrho_n)\divv_x \bm{\varphi} \right] \textup{d}x\textup{d}t \\
			&+ \frac{\hbar}{4}  \int_{0}^{\tau}\int_{\Omega} \big[ \nabla_x \varrho_n \cdot  \divv_x \nabla_x^{\top} \bm{\varphi} + 4 (\nabla_x \sqrt{\varrho_n} \otimes \nabla_x \sqrt{\varrho_n}): \nabla_x \bm{\varphi} \big] \textup{d}x \textup{d}t  \\
			&-\int_{0}^{\tau}\int_{\Omega}\mathbb{S} (\nabla_x \textup{\textbf{u}}_n): \nabla_x \bm{\varphi} \ \textup{d}x\textup{d}t + \int_{0}^{\tau}  \int_{\overline{\Omega}} \nabla_x \bm{\varphi} : \textup{d} \mathfrak{R}_n \  \textup{d}t
			\end{aligned}
			\end{equation}
			holds for any $\tau \in [0,T]$ and any $\bm{\varphi} \in C^1([0,T]; X_n)$, with $(\varrho_n \textup{\textbf{u}}_n)(0, \cdot)= \textup{\textbf{J}}_0$;
			\item[(iii)] there exists a positive constant $\lambda= \lambda(d, \gamma)$ such that the integral inequality
			\begin{equation} \label{energy inequality in n}
			\begin{aligned}
			\begin{aligned}
			&\int_{\Omega} \left[ \frac{1}{2} \varrho_n |\textbf{\textup{u}}_n|^2 +P(\varrho_n) + \frac{\hbar}{2}\ |\nabla_x \sqrt{\varrho_n}|^2\right](\tau, \cdot) \ \textup{d}x \\
			&\hspace{2cm}+ \frac{1}{\lambda}\int_{\overline{\Omega}} \textup{d} \trace[\mathfrak{R}_n](\tau) + \int_{0}^{\tau}\int_{\Omega} \mathbb{S}(\nabla_x \textbf{\textup{u}}_n): \nabla_x \textbf{\textup{u}}_n \ \textup{d}x \textup{d}t  \\
			&\hspace{2cm}\leq \int_{\Omega} \left[\frac{1}{2} \frac{|\textbf{\textup{J}}_0|^2}{\varrho_{0,n}}+ P(\varrho_{0,n})+\frac{\hbar}{2}\ |\nabla_x \sqrt{\varrho_{0,n}}|^2\right]\textup{d}x,
			\end{aligned}
			\end{aligned}
			\end{equation}
			holds for a.e. $\tau \in (0,T)$.
		\end{itemize}
	\end{lemma}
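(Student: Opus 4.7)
The plan is to use the compactness collected in Lemma \ref{convergences epsilon} to pass to the limit $\varepsilon \to 0$ in the weak formulations \eqref{weak formulation continuity equation epsilon}, \eqref{weak formulation momentum equation epsilon} and in the energy inequality \eqref{energy inequality in epsilon and n}. Fixing $n \in \mathbb{N}$ and extracting a subsequence so that all of \eqref{convergence densities epsilon}--\eqref{convergence g epsilon} hold simultaneously, the two-sided bound \eqref{bound densities} on $\varrho_{\varepsilon,n}$ passes to the pointwise limit and yields the claimed $L^{\infty}$-bound on $\varrho_n$ from above and below.

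For the continuity equation, all linear terms converge by \eqref{convergence densities epsilon} and \eqref{convergence momenta epsilon}, while the artificial viscosity contribution
\[
\varepsilon \int_{0}^{\tau} \int_{\Omega} \nabla_x \varrho_{\varepsilon,n} \cdot \nabla_x \varphi \ \textup{d}x \textup{d}t = \sqrt{\varepsilon} \int_{0}^{\tau} \int_{\Omega} \sqrt{\varepsilon}\ \nabla_x \varrho_{\varepsilon,n} \cdot \nabla_x \varphi \ \textup{d}x \textup{d}t
\]
vanishes by \eqref{convergence f epsilon}, producing \eqref{weak formulation continuity equation n}. For the momentum equation, the convective term is handled by \eqref{convergence convective terms epsilon}, the term $\nabla_x \varrho_{\varepsilon,n} \cdot \divv_x \nabla_x^{\top} \bm{\varphi}$ by \eqref{convergence gradient density}, and the viscous stress by the weak convergence of $\nabla_x \textbf{u}_{\varepsilon,n}$ coming from \eqref{convergence velocities epsilon}; the $\varepsilon$-term vanishes by \eqref{convergence g epsilon}. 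The only quantities that do not admit a strong limit are the pressure $p(\varrho_{\varepsilon,n})$ and the capillarity tensor $\nabla_x \sqrt{\varrho_{\varepsilon,n}} \otimes \nabla_x \sqrt{\varrho_{\varepsilon,n}}$, which converge only in the sense of measures to $\overline{p(\varrho_n)}$ and $\overline{\nabla_x \sqrt{\varrho_n} \otimes \nabla_x \sqrt{\varrho_n}}$ by \eqref{convergence pressure}--\eqref{convergence tensor product sqrt density}; their discrepancies with the pointwise products $p(\varrho_n)$ and $\nabla_x \sqrt{\varrho_n} \otimes \nabla_x \sqrt{\varrho_n}$ are assembled into the Reynolds stress $\mathfrak{R}_n$ exactly as prescribed in the paragraph preceding the statement, giving \eqref{weak formulation balance of momentum n}.

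Positivity of $\mathfrak{R}_n$ is the most delicate ingredient, and it is verified in the discussion just above the lemma by convexity of $\varrho \mapsto p(\varrho)$ together with the observation that for each $\bm{\xi} \in \mathbb{R}^d$ the map $\varrho \mapsto |\nabla_x \sqrt{\varrho} \cdot \bm{\xi}|^2$ is convex, so that weak-$*$ limits dominate pointwise values by the standard lower semicontinuity principle. Once $\mathfrak{R}_n \in L^\infty(0,T; \mathcal{M}^+(\overline{\Omega}; \mathbb{R}^{d\times d}_{\textup{sym}}))$ is in hand, the energy inequality is produced by taking the limit inferior in \eqref{energy inequality in epsilon and n}: the non-negative artificial dissipation $\varepsilon \int P''(\varrho_{\varepsilon,n}) |\nabla_x \varrho_{\varepsilon,n}|^2 + \varepsilon \frac{\hbar}{4} \int \varrho_{\varepsilon,n} |\nabla_x^2 \log \varrho_{\varepsilon,n}|^2$ is discarded, the viscous dissipation passes by convexity and weak lower semicontinuity, and the oscillation defect in $P(\varrho_{\varepsilon,n})$ and $|\nabla_x \sqrt{\varrho_{\varepsilon,n}}|^2$ defines a positive measure $\mathfrak{E}_n$. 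The relation $p(\varrho) = (\gamma-1) P(\varrho)$ then identifies $\trace[\mathfrak{R}_n]$ explicitly and yields $\trace[\mathfrak{R}_n] \leq \lambda\, \mathfrak{E}_n$ with $\lambda = \max\{d(\gamma-1), 2\}$, so that $\mathfrak{E}_n$ can be replaced by $\lambda^{-1} \trace[\mathfrak{R}_n]$, giving \eqref{energy inequality in n}.

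The main obstacle is controlling the defect measures produced by the non-linear pressure and capillarity terms consistently with the energy inequality, which is precisely why the convex-function trace estimate between $\trace[\mathfrak{R}_n]$ and $\mathfrak{E}_n$ is indispensable; once this step is executed, every remaining passage reduces to an application of a convergence already recorded in Lemma \ref{convergences epsilon}, and the role of the dissipative formulation is to \emph{not} require identification of $\overline{p(\varrho_n)}$ with $p(\varrho_n)$, sparing us from a Lions--Feireisl--type effective viscous flux analysis at this stage.
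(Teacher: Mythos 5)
Your proposal is correct and follows essentially the same path as the paper: extract a simultaneous subsequence from Lemma \ref{convergences epsilon }, pass linear terms to the limit directly, absorb the pressure and capillarity defects into $\mathfrak{R}_n$, argue positivity via the convexity of $\varrho \mapsto p(\varrho)$ and $\varrho \mapsto |\nabla_x\sqrt{\varrho}\cdot\bm{\xi}|^2$, and then compare $\trace[\mathfrak{R}_n]$ with the energy defect $\mathfrak{E}_n$ through the isentropic identity $p = (\gamma-1)P$ to obtain the constant $\lambda=\max\{d(\gamma-1),2\}$. Your closing remark about avoiding the effective viscous flux argument is an accurate reading of why the dissipative framework simplifies this step.
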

	
	\subsubsection{Limit $n \rightarrow \infty$} \label{Limit n to infinity}
	
	In order to perform the last limit, we need the following result.
	
	\begin{lemma}
		Let $\{ \varrho_n, \textbf{\textup{u}}_n, \mathfrak{R}_n \}_{n \in \mathbb{N}}$ be as in Lemma \ref{limit epsilon to zero}. Then, passing to suitable subsequences as the case may be, the following convergences hold as $n \rightarrow \infty$.
		\begin{align}
		\varrho_n \rightarrow \varrho \quad &\mbox{in } C_{\textup{weak}}([0,T]; L^{\gamma}(\Omega)) \label{convergence densities n}\\[0.1cm]
		\varrho_n\textup{\textbf{u}}_n \rightarrow  \varrho \textup{\textbf{u}}\quad &\mbox{in } C_{\textup{weak}}([0,T]; L^q(\Omega; \mathbb{R}^d)), \label{convergence momenta n} \\[0.1cm]
		\textup{\textbf{u}}_n \rightharpoonup \textup{\textbf{u}} \quad &\mbox{in } L^2(0,T; W^{1,2}(\Omega; \mathbb{R}^d)), \label{convergence velocities n} \\[0.1cm]
		\varrho_n\textup{\textbf{u}}_n \otimes \textup{\textbf{u}}_n \rightharpoonup \varrho \textup{\textbf{u}} \otimes \textup{\textbf{u}} \quad &\mbox{in } L^2(0,T; L^r(\Omega; \mathbb{R}^{d\times d})), \ r>1, \label{convergence convective terms n}\\[0.1cm]
		\varrho_{n} \overset{*}{\rightharpoonup} \varrho \quad &\mbox{in } L^{\infty}(0,\infty; W^{1,\frac{2\gamma}{\gamma+1}}(\Omega) ), \label{convergence gradient density n}\\[0.1cm]
		p(\varrho_n) \overset{*}{\rightharpoonup} \overline{p(\varrho)} \quad &\mbox{in } L^{\infty}(0,T; \mathcal{M}(\overline{\Omega})), \label{convergence pressure n}\\[0.1cm]
		\nabla_x \sqrt{\varrho_n} \otimes 	\nabla_x \sqrt{\varrho_n} \overset{*}{\rightharpoonup} \overline{\nabla_x \sqrt{\varrho} \otimes \nabla_x \sqrt{\varrho}} \quad &\mbox{in } L^{\infty}(0,T; \mathcal{M}(\overline{\Omega})), \label{convergence tensor product sqrt density n}\\[0.1cm]
		\mathfrak{R}_n \overset{*}{\rightharpoonup} \widetilde{\mathfrak{R}} \quad &\mbox{in } L^{\infty}(0,T; \mathcal{M}(\overline{\Omega}; \mathbb{R}_{\textup{sym}}^{d\times d})) \label{convergence defects},
		\end{align}
		where the exponent $q$ is defined as in \eqref{weak convergence momenta}.
	\end{lemma}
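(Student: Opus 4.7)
The plan is to exploit the energy inequality \eqref{energy inequality in n} to derive $n$-independent bounds, apply Banach-Alaoglu to obtain most of the listed convergences, and then upgrade to suitable strong convergence in negative Sobolev spaces so as to identify the weak limits of the nonlinear products.

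First, I would read off from \eqref{energy inequality in n} the uniform estimates
\begin{equation*}
\|\varrho_n\|_{L^\infty(0,T; L^\gamma)} + \|\sqrt{\varrho_n}\,\textbf{u}_n\|_{L^\infty(0,T; L^2)} + \|\nabla_x \sqrt{\varrho_n}\|_{L^\infty(0,T; L^2)} + \|\nabla_x \textbf{u}_n\|_{L^2((0,T)\times\Omega)} + \|\mathfrak{R}_n\|_{L^\infty(0,T; \mathcal{M}(\overline\Omega))} \leq c,
\end{equation*}
invoking Korn's and Poincaré's inequalities for the velocity bound, and the identity $\nabla_x \varrho_n = 2\sqrt{\varrho_n}\,\nabla_x\sqrt{\varrho_n}$ together with Hölder's inequality for the bound of $\varrho_n$ in $L^\infty(0,T; W^{1,2\gamma/(\gamma+1)})$. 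Banach-Alaoglu, combined with the embedding $L^1(\Omega)\hookrightarrow \mathcal{M}(\overline\Omega)$ for the quantities $p(\varrho_n)$ and $\nabla_x\sqrt{\varrho_n}\otimes\nabla_x\sqrt{\varrho_n}$ that enjoy only an $L^1$-bound, delivers at once \eqref{convergence velocities n}, \eqref{convergence gradient density n}, \eqref{convergence pressure n}, \eqref{convergence tensor product sqrt density n}, and \eqref{convergence defects}.

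Next I would promote $\varrho_n$ to continuity in the weak topology by Arzelà-Ascoli applied to the family $t \mapsto \int_\Omega \varrho_n(t,\cdot)\varphi\,\textup{d}x$ for $\varphi \in C^1(\overline\Omega)$: the equicontinuity comes from the continuity equation \eqref{weak formulation continuity equation n}, while pointwise relative compactness follows from the $L^\gamma$-bound. This yields \eqref{convergence densities n}. Combining this with the compact embedding $W^{1,2\gamma/(\gamma+1)}(\Omega) \hookrightarrow\hookrightarrow L^p(\Omega)$ for any $p < \gamma^{*}$ (cf.~\eqref{Sobolev embedding}), I obtain in addition the strong convergence $\varrho_n \to \varrho$ in $L^r(0,T; L^p(\Omega))$ for any finite $r$, which in particular identifies the weak limit of $\varrho_n \textbf{u}_n$: since $\textbf{u}_n \rightharpoonup \textbf{u}$ in $L^2(0,T; L^{2^{*}})$ via Sobolev embedding, the strong-weak product yields $\varrho_n\textbf{u}_n \rightharpoonup \varrho\textbf{u}$ in the sense of distributions. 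An analogous Arzelà-Ascoli argument applied to the momentum equation \eqref{weak formulation balance of momentum n}, tested now against $C^2$-functions to accommodate the quantum term and using the a priori bound on $\varrho_n\textbf{u}_n$ in $L^\infty(0,T;L^q)$ coming from Hölder on $\sqrt{\varrho_n}\in L^\infty(0,T;L^{2\gamma})$ and $\sqrt{\varrho_n}\textbf{u}_n\in L^\infty(0,T;L^2)$, then produces \eqref{convergence momenta n}.

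The main obstacle is \eqref{convergence convective terms n}, where one must identify the weak limit of $\varrho_n \textbf{u}_n \otimes \textbf{u}_n$ as $\varrho\textbf{u}\otimes\textbf{u}$. The strategy is to write the product as $(\varrho_n\textbf{u}_n)\otimes \textbf{u}_n$ and exploit the improved time-regularity of the momentum: from \eqref{convergence momenta n} together with the compact embedding $L^q(\Omega)\hookrightarrow\hookrightarrow W^{-1,s}(\Omega)$ (valid for $s$ below the Sobolev conjugate of $q$), the momentum $\varrho_n\textbf{u}_n$ converges strongly in $C([0,T]; W^{-1,s})$, while $\textbf{u}_n \rightharpoonup \textbf{u}$ in $L^2(0,T; W^{1,2}_0)$. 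The strong-weak duality pairing then identifies the limit. The exponent $r>1$ in \eqref{convergence convective terms n} is selected by Hölder from the factorization $\varrho_n\textbf{u}_n\otimes\textbf{u}_n = (\sqrt{\varrho_n}\textbf{u}_n)\otimes(\sqrt{\varrho_n}\textbf{u}_n)$, with $\sqrt{\varrho_n}\textbf{u}_n$ bounded in $L^\infty(0,T; L^2)$ and, via interpolation with $\textbf{u}_n\in L^2(0,T;L^{2^{*}})$, in $L^2(0,T; L^\sigma)$ for a suitable $\sigma>2$; this guarantees the equiintegrability needed to pass to the limit.
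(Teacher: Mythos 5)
Your proposal follows essentially the same route as the paper: extract the uniform bounds from \eqref{energy inequality in n}, apply Banach--Alaoglu together with the embedding $L^1(\Omega)\hookrightarrow\mathcal{M}(\overline\Omega)$ for the $L^1$-bounded quantities, upgrade the density and momentum to $C_{\rm weak}$-convergence via Arzel\`a--Ascoli, and then use compactness into negative Sobolev spaces to identify the weak limits of the products $\varrho_n\textbf{u}_n$ and $\varrho_n\textbf{u}_n\otimes\textbf{u}_n$. The only cosmetic deviation is that you deduce strong $L^r(0,T;L^p)$ convergence of $\varrho_n$ from the $W^{1,2\gamma/(\gamma+1)}$-bound, whereas the paper more simply uses $L^\gamma(\Omega)\hookrightarrow\hookrightarrow W^{-1,2}(\Omega)$ to pass to $C([0,T];W^{-1,2})$; both are equally valid.
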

	
	\begin{proof}
		From the energy inequality \eqref{energy inequality in n} we can recover the following uniform bounds:
		\begin{align}
		\| \sqrt{\varrho_n} \textbf{u} \|_{L^{\infty}(0,T; L^2(\Omega; \mathbb{R}^d))} & \leq c(\overline{E}), \label{uniform bound kinetic energy}\\
		\| \nabla_x \sqrt{\varrho_n}\|_{L^{\infty}(0,T; L^2(\Omega; \mathbb{R}^d) )} &\leq c(\overline{E}) \label{uniform bound gradient sqr density n} \\
		\| P(\varrho_n) \|_{L^{\infty}(0,T; L^1(\Omega))} &\leq c(\overline{E}), \label{uniform bound pressure potential n}\\
		\| \trace[\mathfrak{R}_n] \|_{L^{\infty}(0,T; L^1(\Omega))} &\leq c(\overline{E}), \label{uniform bounds defects}\\
		\|  \nabla_x \textbf{u}_n\|_{L^2((0,T) \times \Omega; \mathbb{R}^{d\times d})} &\leq c(\overline{E}) \label{uniform bound velocities n}.
		\end{align}
		From \eqref{uniform bound pressure potential n}, it is easy to deduce that, passing to a suitable subsequence,
		\begin{equation} \label{convergence densities in Linfinity}
		\varrho_n \overset{*}{\rightharpoonup} \varrho \quad \mbox{in } L^{\infty} (0,T; L^{\gamma}(\Omega));
		\end{equation}
		this convergence can be strengthened to \eqref{convergence densities n} as a consequence of the Arzel\`{a}-Ascoli theorem. Convergence \eqref{convergence densities n} combined with identity \eqref{gradient rho}, the uniform bound \eqref{uniform bound gradient sqr density n} and the fact that $\gamma>\frac{2\gamma}{\gamma+1}$ imply \eqref{convergence gradient density n}. Convergence \eqref{convergence velocities n} can be recovered from \eqref{uniform bound velocities n}, while from \eqref{uniform bound kinetic energy}, \eqref{convergence densities in Linfinity}, \eqref{convergence gradient density n}, the Sobolev embedding \eqref{Sobolev embedding} and the fact that for a.e. $t\in [0,T]$, as a consequence of H\"{o}lder inequality,
		\begin{equation*}
		\| (\varrho_n \textbf{u}_n) (t,\cdot) \|_{L^q(\Omega; \mathbb{R}^d)} \leq \| (\sqrt{\varrho_n} \textbf{u})(t,\cdot) \|_{L^2(\Omega; \mathbb{R}^d)} \| \sqrt{\varrho_n}(t,\cdot)\|_{L^{2p}(\Omega)},
		\end{equation*}
		with $q$ and $p$ defined as in \eqref{weak convergence momenta} and \eqref{p}, respectively, we get
		\begin{equation} \label{convergence momenta in Linfinity}
		\varrho_n \textbf{u}_n \overset{*}{\rightharpoonup} \overline{\varrho \textbf{u}} \quad \mbox{in } L^{\infty} (0,T; L^q(\Omega; \mathbb{R}^d)).
		\end{equation}
		Now, from the compact Sobolev embedding $L^{\gamma}(\Omega) \hookrightarrow\hookrightarrow W^{-1,2}(\Omega)$, we get the strong convergence of the densities in $C([0,T]; W^{-1,2}(\Omega))$ and therefore
		\begin{equation*}
		\overline{\varrho \textbf{u}}= \varrho \textbf{u} \quad \mbox{a.e. on } (0,T) \times \Omega.
		\end{equation*}
		Once again, convergence \eqref{convergence momenta in Linfinity} can be strengthened to \eqref{convergence momenta n}. Next, convergences \eqref{convergence momenta n}, \eqref{convergence velocities n} combined with the Sobolev embedding $L^q(\Omega)\hookrightarrow\hookrightarrow W^{-1,2}(\Omega)$ imply \eqref{convergence convective terms n}, where the exponent $r$ must satisfy
		\begin{equation*}
		\frac{1}{r} = \frac{1}{q} + \frac{d-2}{2d}.
		\end{equation*}
		Finally, convergences \eqref{convergence pressure n} and \eqref{convergence tensor product sqrt density n} can be deduced from \eqref{uniform bound gradient sqr density n} and \eqref{uniform bound pressure potential n} respectively, repeating the same passages performed in the proof of Lemma \ref{convergences epsilon }, while convergence \eqref{convergence defects} follows from \eqref{uniform bounds defects}.
	\end{proof}
	
	We are now ready to let $n \rightarrow \infty$.  Once again, we get that the weak formulations of the continuity equation \eqref{nsk weak formulation continuity equation} and balance of momentum \eqref{nsk balance of momentum} hold for any $\tau \in [0,T]$ and any $\varphi \in C^1([0,T]\times \overline{\Omega})$, $\bm{\varphi} \in C^1([0,T]; X_n)$, respectively, with the Reynolds stress
	\begin{equation*}
	\mathfrak{R} \in L^{\infty}(0,T; \mathcal{M}^+(\Omega; \mathbb{R}^{d\times d}_{\textup{sym}}))
	\end{equation*}
	such that
	\begin{equation*}
	\textup{d} \mathfrak{R} = \textup{d} \widetilde{\mathfrak{R}} \ + \left(\overline{p(\varrho)}- p(\varrho)\right) \mathbb{I} \ \textup{d}x + \hbar \left(\overline{\nabla_x \sqrt{\varrho} \otimes \nabla_x \sqrt{\varrho}} - \nabla_x \sqrt{\varrho} \otimes \nabla_x \sqrt{\varrho}\right) \textup{d}x.
	\end{equation*}
	Choosing a c\`{a}gl\`{a}d function $E=E(\tau)$ such that
	\begin{equation*}
		E(\tau) =\int_{\Omega} \left[ \frac{1}{2} \varrho |\textbf{\textup{u}}|^2 +P(\varrho) + \frac{\hbar}{2}\ |\nabla_x \sqrt{\varrho}|^2\right](\tau, \cdot) \ \textup{d}x + \frac{1}{\lambda}\int_{\overline{\Omega}} \textup{d} \trace[\mathfrak{R}](\tau)
	\end{equation*}
	for a.e. $\tau \in (0,T)$, the integral inequality
	\begin{equation*}
	\begin{aligned}
 		E(\tau)+ \int_{0}^{\tau}\int_{\Omega} \mathbb{S}(\nabla_x \textbf{\textup{u}}): \nabla_x \textbf{\textup{u}} \ \textup{d}x \textup{d}t \leq \int_{\Omega} \left[\frac{1}{2} \frac{|\textbf{\textup{J}}_0|^2}{\varrho_{0}}+ P(\varrho_{0})+\frac{\hbar}{2}\ |\nabla_x \sqrt{\varrho_{0}}|^2\right]\textup{d}x,
	\end{aligned}
	\end{equation*}
	holds for a.e. $\tau \in (0,T)$. Finally, notice that the spaces $X_n$ can be chosen in such a way that the validity of \eqref{nsk weak formulation balance of momentum} can be extended to any $\bm{\varphi} \in C^1([0,T]; C^2_c(\Omega; \mathbb{R}^d))$ by a density argument. Given
	\begin{equation*}
	\bm{\varphi} \in C^1([0,T]; C^2(\overline{\Omega}; \mathbb{R}^d)) , \ \bm{\varphi}|_{\partial \Omega}=0,
	\end{equation*}
	we can construct a sequence $\{ \bm{\varphi}_n \}_{n\in \mathbb{N}} \subset C^1([0,T]; C^2_c(\Omega; \mathbb{R}^d))$ such that
	\begin{equation*}
	\{\bm{\varphi}_n\}_{n \in \mathbb{N}} \mbox{ is uniformly bounded in } W^{1,\infty}(0,T; W^{2,\infty}(\Omega; \mathbb{R}^d))
	\end{equation*}
	and, for any $(t,x) \in (0,T) \times \Omega$
	\begin{align*}
	\bm{\varphi}_n(t,x) \rightarrow \bm{\varphi}(t,x), \quad &\partial_t \bm{\varphi}_n (t,x)\rightarrow \partial_t \bm{\varphi}(t,x), \\
	\nabla_x \bm{\varphi}_n(t,x) \rightarrow \nabla_x \bm{\varphi}(t,x), \quad &\nabla_x^2 \bm{\varphi}_n(t,x) \rightarrow \nabla_x^2 \bm{\varphi}(t,x).
	\end{align*}
	This concludes the proof of Theorem \ref{nsk existence}.
	
	\subsection{Proof of Theorem \ref{ek existence}} \label{Existence Euler}
	
	Let $\{ [\varrho_{\delta}, \textbf{u}_{\delta}] \}_{\delta >0}$ be a family of dissipative solutions of the quantum Navier--Stokes system
	\begin{align}
	\partial_t \varrho_{\delta} + \divv_x (\varrho_{\delta}\textbf{u}_{\delta})&=0, \label{nsk  delta continuity equation}\\
	\partial_t (\varrho_{\delta} \textbf{u}_{\delta}) + \divv_x \left( \varrho_{\delta}\textbf{u}_{\delta} \otimes \textbf{u}_{\delta} \right) + \nabla_x p(\varrho_{\delta}) &= \delta \divv_x \mathbb{S}(\nabla_x \textbf{u}_{\delta})+ \divv_x \mathbb{K}(\varrho_{\delta}, \nabla_x \varrho_{\delta}, \nabla_x^2 \varrho_{\delta}), \label{nsk delta balance of momentum}
	\end{align}
	with correspondent Reynolds stress $\mathfrak{R}_{\delta}$, pressure \eqref{pressure}, viscous stress tensor \eqref{viscosity}, boundary conditions \eqref{nsk boundary condition} and initial conditions $[\varrho_0, \textbf{J}_0]$ as in the hypotheses of Theorem \eqref{ek existence}. For each fixed $\delta>0$, the existence of a dissipative solution $[\varrho_{\delta}, \textbf{u}_{\delta}]$ in the sense of Definition \ref{nsk dissipative solution} was proven in Theorem \eqref{nsk existence}. Similarly to what was done in the previous section, passing to suitable subsequences as the case may be, we have the following convergences as $\delta \rightarrow 0$.
	\begin{align}
	\varrho_{\delta} \rightarrow \varrho \quad &\mbox{in } C_{\textup{weak}}([0,T]; L^{\gamma}(\Omega)) \\[0.1cm]
	\varrho_{\delta}\textup{\textbf{u}}_{\delta} \rightarrow \textup{\textbf{J}}\quad &\mbox{in } C_{\textup{weak}}([0,T]; L^q(\Omega; \mathbb{R}^d)),  \\[0.1cm]
	\varrho_{\delta} \overset{*}{\rightharpoonup} \varrho \quad &\mbox{in } L^{\infty}(0,T;W^{1, \frac{2\gamma}{\gamma+1}}(\Omega)), \\[0.1cm]
	p(\varrho_{\delta}) \overset{*}{\rightharpoonup} \overline{p(\varrho)} \quad &\mbox{in } L^{\infty}(0,T; \mathcal{M}(\overline{\Omega})),\\[0.1cm]
	\varrho_{\delta}\textup{\textbf{u}}_{\delta} \otimes \textup{\textbf{u}}_{\delta} \overset{*}{\rightharpoonup}  \overline{ \frac{\textbf{J} \otimes \textbf{J}}{\varrho}} \quad &\mbox{in } L^{\infty}(0,T; \mathcal{M}(\overline{\Omega}; \mathbb{R}_{\textup{sym}}^{d\times d})), \\[0.1cm]
	\nabla_x \sqrt{\varrho_{\delta}} \otimes 	\nabla_x \sqrt{\varrho_{\delta}} \overset{*}{\rightharpoonup} \overline{\nabla_x \sqrt{\varrho} \otimes \nabla_x \sqrt{\varrho}} \quad &\mbox{in } L^{\infty}(0,T; \mathcal{M}(\overline{\Omega}; \mathbb{R}_{\textup{sym}}^{d\times d})), \\[0.1cm]
	\mathfrak{R}_{\delta} \overset{*}{\rightharpoonup} \widetilde{\mathfrak{R}} \quad &\mbox{in } L^{\infty}(0,T; \mathcal{M}(\overline{\Omega}; \mathbb{R}_{\textup{sym}}^{d\times d})), \\[0.1cm]
	\sqrt{\delta} \  \mathbb{S}(\nabla_x \textbf{u}_{\delta}) \rightharpoonup \sqrt{\delta} \  \widetilde{\mathbb{S}} \quad &\mbox{in } L^2((0,T)\times \Omega; \mathbb{R}^{d\times d}), \label{convergence viscosities}
	\end{align}
	with $q$ defined as in \eqref{weak convergence momenta}.
	
	We are now ready to let $\delta \rightarrow 0$ in \eqref{nsk weak formulation continuity equation}--\eqref{nsk energy inequality}. Notice that the term with the $\delta$-dependent viscous stress tensor vanishes due to convergence \eqref{convergence viscosities}; indeed,
	\begin{equation*}
	\delta \int_{0}^{\tau} \int_{\Omega} \mathbb{S}(\nabla_x \textbf{u}_{\delta}) : \nabla_x \bm{\varphi} \ \textup{d}x\textup{d}t = \sqrt{\delta} \int_{0}^{\tau} \int_{\Omega} \sqrt{\delta} \  \mathbb{S}(\nabla_x \textbf{u}_{\delta}) : \nabla_x \bm{\varphi} \ \textup{d}x\textup{d}t \rightarrow 0.
	\end{equation*}
	We get the weak formulations of the continuity equation \eqref{ek weak formulation continuity equation}, of the balance of momentum \eqref{ek weak formulation balance of momentum} and of the energy inequality \eqref{ek energy inequality} for the quantum Euler system, with
	\begin{equation*}
	\mathfrak{R} \in L^{\infty}(0,T; \mathcal{M}^+(\overline{\Omega}; \mathbb{R}_{\textup{sym}}^{d\times d}))
	\end{equation*}
	such that
	\begin{align*}
	\textup{d} \mathfrak{R} = \textup{d} \widetilde{\mathfrak{R}}  &+ \left(\overline{p(\varrho)}- p(\varrho)\right) \mathbb{I} \ \textup{d}x + \left( \overline{ \frac{\textbf{J} \otimes \textbf{J}}{\varrho}}- \frac{\textbf{J} \otimes \textbf{J}}{\varrho} \right) \textup{d}x \\
	&+ \hbar \left(\overline{\nabla_x \sqrt{\varrho} \otimes \nabla_x \sqrt{\varrho}} - \nabla_x \sqrt{\varrho} \otimes \nabla_x \sqrt{\varrho}\right) \textup{d}x.
	\end{align*}
	Indeed, proceeding as in the previous section, we can write for any $\bm{\xi} \in \mathbb{R}^d$ and any bounded open set $\mathcal{B} \subset \Omega$
	\begin{equation*}
	\left( \overline{ \frac{\textbf{J} \otimes \textbf{J}}{\varrho}}- \frac{\textbf{J} \otimes \textbf{J}}{\varrho} \right) : (\bm{\xi} \otimes \bm{\xi}) = \overline{ \left| \frac{\textbf{J}\cdot \bm{\xi}}{\varrho} \right|^2} -   \left| \frac{\textbf{J} \cdot \bm{\xi}}{\varrho} \right|^2 \quad \mbox{in } \mathcal{D}'((0,T) \times \mathcal{B}),
	\end{equation*}
	where the non-negativity of the right-hand side quantity will follow from the convexity of the lower semi-continuous function $[\varrho, \textbf{J}] \mapsto \left| \frac{\textbf{J} \cdot \bm{\xi}}{\varrho} \right|^2$. This concludes the proof of Theorem \ref{ek existence}.
	
	\section{Semiflow selection}
	
	We start by fixing a proper setting. We let
	\begin{itemize}
		\item $H:=W^{-k,2}(\Omega) \times W^{-k,2}(\Omega; \mathbb{R}^d) \times \mathbb{R}$ with $k> \frac{d}{2}+1$ fixed; notice that with this particular choice of the constant $k$ we can guarantee
		\begin{equation} \label{compact embedding Sobolev}
			L^p(\Omega) \hookrightarrow\hookrightarrow W^{-k,2}(\Omega) \quad \mbox{for any } p\geq 1;
		\end{equation}
		\item $\mathcal{D}$ denote the space of initial data associated to the quantum Navier--Stokes or quantum Euler systems; in both cases, it can be chosen as
		\begin{equation*}
		\mathcal{D} := \left\{ [\varrho_0, \textbf{J}_0, E_0] \in H: \ \varrho_0 \in L^1(\Omega), \ \varrho_0\geq 0, \ \textbf{J}_0\in L^1(\Omega;\mathbb{R}^d) \mbox{ satisfying } \eqref{initial energy}
		\right\}
		\end{equation*}
		where
		\begin{equation} \label{initial energy}
		\int_{\Omega} \left[ \frac{1}{2} \frac{|\textbf{J}_0|^2}{\varrho_0} + P(\varrho_0) + \frac{\hbar}{2} |\nabla_x \sqrt{\varrho_{0}}|^2\right] {\rm d}x \leq E_0;
		\end{equation}
		\item $\mathcal{T}=\mathfrak{D}([0,\infty); H)$ represents the trajectory space;
		
		\item $\mathcal{U}: \mathcal{D}\rightarrow 2^{\mathcal{T}}$ represents the set--valued mapping that associate to every $[\varrho_0, \textbf{J}_0, E_0] \in \mathcal{D}$ the family of dissipative solutions in the sense of Definition \ref{nsk dissipative solution} or \ref{ek dissipative solution} if we are considering the quantum Navier--Stokes or quantum Euler system, respectively, arising from the initial data $[\varrho_0, \textbf{J}_0, E_0] $. More precisely, for every $[\varrho_0, \textbf{J}_0, E_0] \in \mathcal{D}$
		\begin{align*}
		\mathcal{U}&[\varrho_0, \textbf{J}_0, E_0]= \\
		&\{ [\varrho, \textbf{J}, E] \in \mathcal{T}: \ [\varrho, \textbf{J}, E] \mbox{ is a dissipative solution with initial data } [\varrho_0, \textbf{J}_0, E_0] \}.
		\end{align*}
		Notice that also in the context of the quantum Navier--Stokes system, we consider the momentum $\textbf{J}=\varrho \textbf{u}$ as a state variable along with the density $\varrho$ instead of the velocity $\textbf{u}$ because it is at least weakly continuous in time.
	\end{itemize}
	
	We are now ready to give the following definition.
	\begin{definition}[Semiflow selection] \label{semiflow selection}
		A \textit{semiflow selection} in the class of dissipative solutions is a Borel measurable map $U: \mathcal{D} \rightarrow \mathcal{T}$ such that
		\begin{equation*}
		U[\varrho_0, \textbf{J}_0, E_0] \in \mathcal{U}[\varrho_0, \textbf{J}_0, E_0] \mbox{ for every } [\varrho_0, \textbf{J}_0, E_0] \in \mathcal{D}
		\end{equation*}
		satisfying the semigroup property: for any $[\varrho_0, \textbf{J}_0, E_0] \in \mathcal{D}$ and any $t_1, t_2 \geq 0$
		\begin{equation*}
		U[\varrho_0, \textbf{\textup{J}}_0, E_0] (t_1+t_2)= U[\varrho(t_1), \textbf{\textup{J}}(t_1), E(t_1)] (t_2)
		\end{equation*}
		where $[\varrho, \textbf{J}, E]= U[\varrho_0, \textbf{J}_0, E_0]$.
	\end{definition}
	
	The goal of this section is to prove the following two results.
	
	\begin{theorem}[Semiflow selection for the quantum Navier--Stokes system] \label{nsk semiflow selection}
		The quantum Navier--Stokes system \eqref{nsk continuity equation}--\eqref{nsk balance of momentum} with constitutive relations \eqref{pressure}--\eqref{viscosity} and boundary conditions \eqref{nsk boundary condition} admits a semiflow selection in the sense of Definition \ref{semiflow selection}.
	\end{theorem}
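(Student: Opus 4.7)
The plan is to follow the abstract selection scheme of Cardona--Kapitanski \cite{CarKap}, as adapted to the compressible setting by Breit, Feireisl and Hofmanov\'a \cite{BreFeiHof}, and replace their choice of trajectory space by the Skorokhod space $\mathfrak{D}([0,\infty);H)$ of c\`agl\`ad functions, so that the energy can be retained as a genuine state variable despite possible downward jumps. The core of the argument reduces to checking that the set--valued map $\mathcal{U}$ fits the axiomatic framework of the abstract theorem, and then performing a countable iterative minimization that shrinks each fiber $\mathcal{U}[\varrho_0,\textbf{J}_0,E_0]$ down to a singleton in a measurable and shift--compatible way.

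First I would verify the four structural properties of the solution map. (a) \emph{Non--emptiness}: $\mathcal{U}[\varrho_0,\textbf{J}_0,E_0]\neq\emptyset$ for every initial datum in $\mathcal{D}$, which is precisely Theorem~\ref{nsk existence}. (b) \emph{Compactness}: using the energy inequality \eqref{nsk energy inequality} together with the regularity class in Definition \ref{nsk dissipative solution}, every sequence in $\mathcal{U}[\varrho_0,\textbf{J}_0,E_0]$ enjoys the uniform bounds \eqref{uniform bound kinetic energy}--\eqref{uniform bound velocities n} derived in the existence proof; by the compact embedding $L^p(\Omega)\hookrightarrow\hookrightarrow W^{-k,2}(\Omega)$ recalled in \eqref{compact embedding Sobolev} and Arzel\`a--Ascoli for weakly continuous functions, together with Helly's selection theorem applied to the non--increasing energy $E$, I can extract a subsequence converging in $\mathfrak{D}([0,\infty);H)$ to a limit that is again a dissipative solution (the limit passage is exactly the one carried out in Section~\ref{Limit n to infinity} for the Galerkin approximations). (c) \emph{Measurability}: compactness of the fibers together with closedness of the graph of $\mathcal{U}$ in $\mathcal{D}\times\mathcal{T}$ implies Borel measurability of the set--valued map by a standard selection--theoretic argument. (d) \emph{Shift and concatenation}: the shift property $\sigma_\tau\mathcal{U}[\varrho_0,\textbf{J}_0,E_0]\subset\mathcal{U}[\varrho(\tau),\textbf{J}(\tau),E(\tau)]$ is immediate from the structure of the weak formulations \eqref{nsk weak formulation continuity equation}, \eqref{nsk weak formulation balance of momentum} and \eqref{nsk energy inequality}; the concatenation property requires gluing a solution on $[0,\tau]$ with one on $[\tau,\infty)$ emanating from $[\varrho(\tau),\textbf{J}(\tau),E(\tau)]$, and here the c\`agl\`ad convention on $E$ is crucial because it accommodates a possible jump $E(\tau^-)\geq E(\tau^+)$ at the gluing point while keeping the defect measure $\mathfrak{R}$ consistent.

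Once these properties are established, I would fix a countable family $\{F_k\}_{k\in\mathbb{N}}$ of bounded continuous functionals on $H$ that separates points (for instance, duality pairings against a countable dense set in $C^\infty_c$), together with a countable dense set $\{\lambda_j\}\subset(0,\infty)$, and form the selection functionals
\[
I_{j,k}[\varrho,\textbf{J},E]=\int_0^\infty e^{-\lambda_j t} F_k\big(\varrho(t),\textbf{J}(t),E(t)\big)\,\textup{d}t.
\]
Each $I_{j,k}$ is lower semi--continuous on $\mathcal{T}$ thanks to property (b), so the sub--selection
\[
\mathcal{U}_{j,k}[\textup{data}]:=\Big\{\textup{\textbf{U}}\in\mathcal{U}_{\textup{prev}}[\textup{data}]:\ I_{j,k}[\textup{\textbf{U}}]=\min_{\textup{\textbf{V}}\in\mathcal{U}_{\textup{prev}}[\textup{data}]}I_{j,k}[\textup{\textbf{V}}]\Big\}
\]
is again a non--empty, compact--valued, measurable, shift/concatenation--closed map. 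Enumerating the pairs $(j,k)$ and iterating, the intersection $U:=\bigcap_{j,k}\mathcal{U}_{j,k}$ selects, by the separating property of $\{F_k\}$ and injectivity of the Laplace transform in $\{\lambda_j\}$, exactly one trajectory for every admissible initial datum. The semigroup property $U[\textup{data}](t_1+t_2)=U[\,U[\textup{data}](t_1)](t_2)$ follows from the exponential weight $e^{-\lambda_j t}$: minimality of $I_{j,k}$ on the full trajectory is equivalent, after splitting the integral at $t_1$ and using shift/concatenation, to minimality of the same functional restricted to $[t_1,\infty)$ with initial datum $[\varrho(t_1),\textbf{J}(t_1),E(t_1)]$.

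The main obstacle I anticipate is verifying stability of the concatenation under the energy jumps together with the Reynolds defect $\mathfrak{R}$: gluing two solutions at time $\tau$ can a priori produce a defect measure that is inconsistent with the inequality in item (iv) of Definition~\ref{nsk dissipative solution} unless one carefully uses the c\`agl\`ad representative of $E$ and respects the constraint $\tfrac{1}{\lambda}\int_{\overline\Omega}\textup{d}\trace[\mathfrak{R}](\tau)=E(\tau)-\int_\Omega[\tfrac{1}{2}\varrho|\textbf{u}|^2+P(\varrho)+\tfrac{\hbar}{2}|\nabla_x\sqrt{\varrho}|^2]\,\textup{d}x$. This is precisely where the choice of trajectory space pays off; verifying it is essentially bookkeeping once one notices that the defect on $[0,\tau]$ and on $[\tau,\infty)$ match up on the common endpoint via the left--continuity of $E$. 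Everything else is a direct transcription of the abstract machinery.
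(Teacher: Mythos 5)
Your proposal takes essentially the same route as the paper. The paper also reduces the result to checking non-emptiness, compactness, measurability, shift invariance and continuation of the set--valued map $\mathcal{U}$, relying on the existence theorems for (P1), on weak sequential stability (Propositions \ref{nsk weak sequential stability}--\ref{ek weak sequential stability}) for (P2)--(P3), and on the arguments of \cite{BreFeiHof} for (P4)--(P5); it then invokes the abstract selection result (Theorem 3.2 in \cite{Bas}) rather than re-deriving the Krylov/Cardona--Kapitanski iterated-minimization scheme. You additionally unpack that abstract machinery (the countable family of lower semi-continuous Laplace-type functionals $I_{j,k}$ and the nested-minimizer intersection), which the paper delegates to the cited reference; that extra exposition is correct but not new content relative to what the paper uses as a black box. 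Your remark about why the c\`agl\`ad trajectory space is needed for the concatenation property and the bookkeeping with $E(\tau-)$ versus $E(\tau)$ matches the paper's motivation for that choice.

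One small caveat: in your concatenation discussion (P5), the paper's convention is that the datum handed to the second solution at time $T$ is $[\varrho(T),\textbf{J}(T),E(T-)]$, i.e.\ the left limit of the energy, not $E(T)$ or $E(T^+)$ as your notation momentarily suggests. Since $E$ is taken c\`agl\`ad, $E(T-)=E(T)$, so no real discrepancy arises, but the defect-measure bookkeeping you flag as "the main obstacle" is precisely resolved by adhering strictly to this left-continuous convention; stating it in terms of $E(\tau^+)$ would break the energy inequality \eqref{nsk energy inequality} at the gluing point.
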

	
	\begin{theorem}[Semiflow selection for the quantum Euler system] \label{ek semiflow selection}
		The quantum Euler system \eqref{ek continuity equation}--\eqref{ek balance of momentum} with the isentropic pressure \eqref{pressure} and boundary conditions \eqref{ek boundary conditions} admits a semiflow selection in the sense of Definition \ref{semiflow selection}.
	\end{theorem}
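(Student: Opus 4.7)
The plan is to follow the deterministic adaptation of Krylov's selection principle developed by Cardona--Kapitanski and refined by Breit--Feireisl--Hofmanov\'{a}, carried out here in the Skorokhod space $\mathfrak{D}([0,\infty); H)$ of c\`{a}gl\`{a}d trajectories. First I would verify three structural properties of the set-valued map $\mathcal{U}$: (i) \emph{non-emptiness}, which is Theorem~\ref{ek existence}; (ii) \emph{compactness} of $\mathcal{U}[\varrho_0,\textbf{J}_0,E_0]$ in $\mathcal{T}$, which follows from the uniform energy bound encoded in \eqref{initial energy} combined with the weak continuity in time of $\varrho$ and $\textbf{J}$, the compact embedding \eqref{compact embedding Sobolev}, and the Helly-type selection principle for non-increasing energies $E \in \mathfrak{D}([0,\infty))$; and (iii) \emph{Borel measurability} of the map $[\varrho_0,\textbf{J}_0,E_0] \mapsto \mathcal{U}[\varrho_0,\textbf{J}_0,E_0]$ with respect to the Hausdorff metric, obtained from the stability of the weak formulations \eqref{ek weak formulation continuity equation}--\eqref{ek energy inequality} under passing to the limit in the initial data, exactly as in the proof of Theorem~\ref{ek existence}.

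Next I would prove the two invariance properties that make a semigroup selection possible. The \emph{shift invariance}: if $[\varrho,\textbf{J},E] \in \mathcal{U}[\varrho_0,\textbf{J}_0,E_0]$ and $T \geq 0$, then the shifted trajectory $[\varrho(T+\cdot),\textbf{J}(T+\cdot),E(T+\cdot)]$ belongs to $\mathcal{U}[\varrho(T),\textbf{J}(T),E(T)]$. This is immediate from the definition once one notes that the Reynolds stress $\mathfrak{R}$ is a measure in space-time, so its restriction to $(T,\infty)\times\overline{\Omega}$ serves as the Reynolds stress for the shifted solution. The \emph{concatenation property}: given $[\varrho^1,\textbf{J}^1,E^1] \in \mathcal{U}[\varrho_0,\textbf{J}_0,E_0]$ and $[\varrho^2,\textbf{J}^2,E^2] \in \mathcal{U}[\varrho^1(T),\textbf{J}^1(T),E^1(T)]$, the glued trajectory (using $[\varrho^1,\textbf{J}^1,E^1]$ on $[0,T]$ and $[\varrho^2(\cdot-T),\textbf{J}^2(\cdot-T),E^2(\cdot-T)]$ on $[T,\infty)$) belongs to $\mathcal{U}[\varrho_0,\textbf{J}_0,E_0]$. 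Here the c\`{a}gl\`{a}d choice is essential: it forces $E(T^-)\geq E^2(0^-)$, so the energy inequality \eqref{ek energy inequality} remains valid across $t=T$ even when a jump is introduced, and the compatibility of the momentum equation at $t=T$ follows from weak continuity of $\textbf{J}$.

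With these three structural properties in hand, the selection itself is obtained by the classical iterative argmin procedure. I would fix a countable family of functionals that separates points of $\mathcal{T}$, for instance
\begin{equation*}
	I_{\lambda,\phi}[\varrho,\textbf{J},E] = \int_0^\infty e^{-\lambda t}\left( \int_\Omega \varrho(t,\cdot)\phi\,\textup{d}x \right)\textup{d}t, \quad \lambda>0,\ \phi\in C_c^\infty(\Omega),
\end{equation*}
together with analogous functionals built from $\textbf{J}$ and from $E$. Enumerating these as $\{I_k\}_{k\in\mathbb{N}}$, I would define inductively $\mathcal{U}_0=\mathcal{U}$ and
\begin{equation*}
	\mathcal{U}_{k+1}[\varrho_0,\textbf{J}_0,E_0] = \left\{ \omega \in \mathcal{U}_k[\varrho_0,\textbf{J}_0,E_0] : I_k[\omega] = \min_{\omega'\in\mathcal{U}_k[\varrho_0,\textbf{J}_0,E_0]} I_k[\omega'] \right\}.
\end{equation*}
Compactness of $\mathcal{U}_k$ and lower semicontinuity of $I_k$ in the Skorokhod topology ensure the minimum is attained and that $\mathcal{U}_{k+1}$ is again non-empty, compact, and Borel measurable. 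The intersection $\mathcal{U}_\infty[\varrho_0,\textbf{J}_0,E_0]=\bigcap_k \mathcal{U}_k[\varrho_0,\textbf{J}_0,E_0]$ is a single point thanks to the separating property of $\{I_k\}$, and this point is our selection $U[\varrho_0,\textbf{J}_0,E_0]$; its Borel measurability follows from the Kuratowski--Ryll-Nardzewski theorem applied at each level. The semigroup property is then a consequence of the shift and concatenation invariances combined with the fact that each $I_k$ can be written as a sum over $[0,t_1]$ and $[t_1,\infty)$: any deviation from the semigroup identity at time $t_1$ would produce a strictly smaller value of some $I_k$ after splicing, contradicting minimality.

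The main obstacle I expect is the rigorous verification of the concatenation property for the Reynolds stress together with the energy jump: one must check that the spliced $\mathfrak{R}$ still lies in $L^\infty(0,T;\mathcal{M}^+(\overline{\Omega};\mathbb{R}^{d\times d}_{\textrm{sym}}))$ and that the constant $\lambda$ in the relation $\frac{1}{\lambda}\int_{\overline{\Omega}}\textup{d}\,\trace[\mathfrak{R}](\tau)\leq E(\tau)$ can be chosen uniformly; this is where the c\`{a}gl\`{a}d choice and the convexity properties used in the proof of Theorem~\ref{ek existence} (for controlling $\overline{p(\varrho)}-p(\varrho)$, $\overline{\textbf{J}\otimes\textbf{J}/\varrho}-\textbf{J}\otimes\textbf{J}/\varrho$ and the quantum defect) become indispensable. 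Once this compatibility is established, the remainder of the argument is essentially a mechanical transcription of the Breit--Feireisl--Hofmanov\'{a} scheme to the present quantum-Euler setting.
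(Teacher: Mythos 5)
Your proposal follows essentially the same route the paper takes: verify non-emptiness (existence), compactness and measurability (weak sequential stability), shift invariance, and continuation, and then invoke the Krylov-type argmin selection in the Skorokhod space of c\`{a}gl\`{a}d trajectories. The only difference is that the paper delegates the iterative minimization step to a cited abstract theorem (Theorem 3.2 in \cite{Bas}), whereas you spell it out; the structural verifications, the use of the compact embedding \eqref{compact embedding Sobolev}, and the Helly-type argument for the non-increasing energies are identical in spirit.
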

	
	Both Theorems \ref{nsk semiflow selection} and \eqref{ek semiflow selection} are a direct consequence of Theorem 3.2 in \cite{Bas} once we have verified that the set--valued map $\mathcal{U}$ verifies the following five properties.
	\begin{itemize}
		\item[(\textbf{P1})] \textit{Non-emptiness}: $\mathcal{U}[\varrho_0, \textbf{\textup{J}}_0, E_0] $ is a non-empty subset of $\mathcal{T}$ for any $[\varrho_0, \textbf{\textup{J}}_0, E_0]  \in \mathcal{D}$.
		\item[(\textbf{P2})] \textit{Compactness}: $\mathcal{U}[\varrho_0, \textbf{\textup{J}}_0, E_0] $ is a compact subset of $\mathcal{T}$ for every $[\varrho_0, \textbf{\textup{J}}_0, E_0] \in \mathcal{D}$.
		\item[(\textbf{P3})] \textit{Measurability}: $\mathcal{U}: \mathcal{D}\rightarrow 2^{\mathcal{T}}$ is Borel measurable.
		\item [(\textbf{P4})] \textit{Shift invariance}: introducing the positive shift operator $S_T \circ \Phi$ for every $T>0$ and $\Phi \in \mathcal{T}$ as
		\begin{equation*}
		S_T \circ \Phi(t) = \Phi(T+t), \quad \mbox{for all } t\geq 0,
		\end{equation*}
		then, for any $T>0$, $[\varrho_0, \textbf{\textup{J}}_0, E_0] \in \mathcal{D}$ and $[\varrho, \textbf{\textup{J}}, E] \in \mathcal{U}[\varrho_0, \textbf{\textup{J}}_0, E_0]$, we have
		\begin{equation*}
		S_T \circ [\varrho, \textbf{\textup{J}}, E] \in \mathcal{U} ([\varrho(T), \textbf{\textup{J}}(T), E(T-)]).
		\end{equation*}
		\item[(\textbf{P5})] \textit{Continuation}: introducing the continuation operator $\Phi_1 \cup_T \Phi_2$ for every $T>0$ and $\Phi_1, \Phi_2 \in \mathcal{T}$ as
		\begin{equation*}
		\Phi_1 \cup_T \Phi_2(t) =\begin{cases}
		\Phi_1(t) &\mbox{for } 0\leq t\leq T, \\
		\Phi_2(t-T) &\mbox{for } t>T,
		\end{cases} \quad \mbox{for all }t\geq 0,
		\end{equation*}
		then, for any $T>0$, $[\varrho_0, \textbf{\textup{J}}_0, E_0] \in \mathcal{D}$,
		\begin{align*}
		[\varrho_1, \textbf{\textup{J}}_1, E_1] &\in \mathcal{U}[\varrho_0, \textbf{\textup{J}}_0, E_0], \\
		[\varrho_2, \textbf{\textup{J}}_2, E_2] &\in \mathcal{U}[\varrho_1(T), \textbf{\textup{J}}_1(T), E_1(T-)],
		\end{align*}
		we have
		\begin{equation*}
		[\varrho_1, \textbf{\textup{J}}_1, E_1] \cup_T [\varrho_2, \textbf{\textup{J}}_2, E_2] \in \mathcal{U}[\varrho_0, \textbf{\textup{J}}_0, E_0].
		\end{equation*}
	\end{itemize}
	
	To this end, we have the following facts.
	\begin{itemize}
		\item Property (\textbf{P1}) is equivalent in showing the \textit{existence} of a dissipative solution in the sense of Definitions \ref{nsk dissipative solution} and \ref{ek dissipative solution} for any fixed initial data $[\varrho_0, \textbf{J}_0, E_0] \in \mathcal{D}$. This has already been achieved in Theorems \ref{nsk existence} and \ref{ek existence}.
		\item Properties (\textbf{P2}) and (\textbf{P3}) hold true if we manage to prove the \textit{weak sequential stability} of the solution set $\mathcal{U}[\varrho_0, \textbf{J}_0, E_0]$ for every $[\varrho_0, \textbf{J}_0, E_0] \in \mathcal{D}$ fixed, since it will  in particular imply compactness and the closed-graph property of the mapping
		\begin{equation*}
		\mathcal{D} \ni [\varrho_0, \textbf{J}_0, E_0]  \rightarrow \mathcal{U}[\varrho_0, \textbf{J}_0, E_0] \in 2^{\mathcal{T}},
		\end{equation*}
		and thus the Borel--measurality of $\mathcal{U}$, cf. Lemma 12.1.8 in \cite{StrVar}.
		\item Properties (\textbf{P4}) and (\textbf{P5}) can be easily checked for both systems following the same arguments done in \cite{BreFeiHof}, Lemma 4.2 and 4.3.
	\end{itemize}
	
	Therefore, the proofs of Theorems \ref{nsk semiflow selection} and \ref{ek semiflow selection} reduce to the proof of the weak sequential stability results.
	
	\begin{proposition}[Weak sequential stability for the quantum Navier--Stokes system] \label{nsk weak sequential stability}
		Let \\
		$\{ [\varrho_n, \textbf{\textup{u}}_n] \}_{n \in \mathbb{N}}$ be a  family of dissipative solutions of the quantum Navier--Stokes system \eqref{nsk continuity equation}, \eqref{nsk balance of momentum} with the corresponding total energies  $\{ E_n\}_{n \in \mathbb{N}}$ and initial data $\{ [\varrho_{0,n}, \textbf{\textup{J}}_{0,n}, E_{0,n}] \}_{n \in \mathbb{N}}$ in the sense of Definition \ref{nsk dissipative solution}. If
		\begin{equation*}
		[\varrho_{0,n}, \textbf{\textup{J}}_{0,n}, E_{0,n}] \rightarrow [\varrho_0, \textbf{\textup{J}}_0, E_0] \quad \mbox{in } H,
		\end{equation*}
		then, at least for suitable subsequences,
		\begin{equation} \label{nsk convergence in trajectory space}
		[\varrho_n, \textbf{\textup{J}}_n=\varrho_n\textbf{\textup{u}}_n, E_n] \rightarrow [\varrho, \textbf{\textup{J}}=\varrho\textbf{\textup{u}}, E] \quad \mbox{in } \ \mathfrak{D}([0, \infty); H),
		\end{equation}
		where $[\varrho, \textbf{\textup{u}}]$ is another dissipative solution of the same problem with total energy $E$.
	\end{proposition}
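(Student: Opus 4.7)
The strategy follows closely the limit $n\to\infty$ carried out in the proof of Theorem \ref{nsk existence} (Section \ref{Limit n to infinity}), with two extra subtleties: each element of the sequence already carries its own Reynolds stress $\mathfrak{R}_n$, and the convergence must be identified in the Skorokhod topology on the trajectory space $\mathcal{T}$. I would proceed in three steps.

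First, since $E_{0,n}\to E_0$ in $\mathbb{R}$ the sequence of initial energies is bounded, and the energy inequality \eqref{nsk energy inequality} upgrades this to $\sup_n\sup_{\tau\in[0,T]}E_n(\tau)\le c(E_0,T)$ for every $T>0$. Unpacking the identity linking $E_n$ to kinetic energy, pressure potential, BD-type term and $\trace[\mathfrak{R}_n]$, one recovers exactly the same uniform bounds \eqref{uniform bound kinetic energy}--\eqref{uniform bound velocities n} as in Section \ref{Limit n to infinity}. From these, I would extract weakly/weakly-$*$ convergent subsequences along the lines of \eqref{convergence densities n}--\eqref{convergence defects}: combining the compact embedding \eqref{compact embedding Sobolev} with the weak time-continuity inherited from \eqref{nsk weak formulation continuity equation}--\eqref{nsk weak formulation balance of momentum} upgrades the weak-$*$ convergence of $\varrho_n$ and $\varrho_n\mathbf{u}_n$ to strong convergence in $C_{\textup{loc}}([0,\infty); W^{-k,2}(\Omega))$ of the first two components of the trajectory, while the quadratic quantities $\varrho_n\mathbf{u}_n\otimes\mathbf{u}_n$, $p(\varrho_n)$, $\nabla_x\sqrt{\varrho_n}\otimes\nabla_x\sqrt{\varrho_n}$ and the stresses $\mathfrak{R}_n$ converge only weakly-$*$ as bounded measures, to limits denoted $\overline{\varrho\mathbf{u}\otimes\mathbf{u}}$, $\overline{p(\varrho)}$, $\overline{\nabla_x\sqrt{\varrho}\otimes\nabla_x\sqrt{\varrho}}$ and $\widetilde{\mathfrak{R}}$, respectively.

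Second, plugging these convergences into \eqref{nsk weak formulation continuity equation}--\eqref{nsk weak formulation balance of momentum}, the continuity equation passes to the limit immediately, and in the balance of momentum all oscillation and concentration defects are absorbed into a new Reynolds stress
\begin{equation*}
\textup{d}\mathfrak{R} = \textup{d}\widetilde{\mathfrak{R}} + \bigl(\overline{\varrho\mathbf{u}\otimes\mathbf{u}} - \varrho\mathbf{u}\otimes\mathbf{u}\bigr)\textup{d}x + \bigl(\overline{p(\varrho)} - p(\varrho)\bigr)\mathbb{I}\,\textup{d}x + \hbar\bigl(\overline{\nabla_x\sqrt{\varrho}\otimes\nabla_x\sqrt{\varrho}} - \nabla_x\sqrt{\varrho}\otimes\nabla_x\sqrt{\varrho}\bigr)\textup{d}x,
\end{equation*}
whose positivity follows from the convexity of $\varrho\mapsto p(\varrho)$, of $[\varrho,\mathbf{J}]\mapsto |\mathbf{J}\cdot\bm{\xi}|^2/\varrho$ and of $\varrho\mapsto |\nabla_x\sqrt{\varrho}\cdot\bm{\xi}|^2$, exactly as in Sections 4.1.3--4.1.4.

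The main obstacle is the convergence of the energies $E_n$ in the Skorokhod space $\mathfrak{D}([0,\infty);\mathbb{R})$ and the identification of the limiting energy inequality. Choosing $\psi$ approaching the indicator of an interval $[\tau_1,\tau_2]$ in \eqref{nsk energy inequality} shows that, after adjustment on a null set, each $E_n$ is a non-increasing càglàd function on $[0,\infty)$; Helly's selection theorem then produces a subsequence converging pointwise on a cocountable set to a non-increasing càglàd limit $E$, which for monotone càglàd functions is equivalent to convergence in $\mathfrak{D}([0,\infty);\mathbb{R})$ and combined with the two strong convergences above yields \eqref{nsk convergence in trajectory space}. Passing to the limit in \eqref{nsk energy inequality} with the aid of the weak lower semicontinuity of the kinetic energy, pressure potential and BD-term and of the weak-$*$ convergence of $\trace[\mathfrak{R}_n]$, gives the energy inequality for the limit triple $[\varrho,\mathbf{u},E]$; finally, the identity relating $E(\tau)$ and $\tfrac{1}{\lambda}\int_{\overline\Omega}\textup{d}\trace[\mathfrak{R}](\tau)$ at a.e.\ $\tau>0$ is obtained by transferring the bound $\trace[\mathfrak{R}]\le\lambda\,\mathfrak{E}$ from the sequence, as in the existence proof, while the initial condition $E(0-)=E_0$ follows directly from $E_{0,n}\to E_0$ in $H$.
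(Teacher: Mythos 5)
Your proposal is essentially correct and reproduces the paper's sketched argument: uniform bounds from the energy inequality, extraction of weakly convergent subsequences as in the existence proof, strengthening of the weak time-continuous convergence of $[\varrho_n,\textbf{J}_n]$ to convergence in $C_{\rm loc}([0,\infty); W^{-k,2})$ via the compact embedding \eqref{compact embedding Sobolev} (hence Skorokhod convergence, since the limit is continuous), and Helly's selection theorem for the non-increasing energies $E_n$ combined with the fact that for monotone functions pointwise a.e.\ convergence is equivalent to convergence in $\mathfrak{D}([0,\infty))$. The absorption of the pressure and quantum defects and of the inherited stresses $\mathfrak{R}_n$ into a new positive Reynolds stress, and the passage to the limit in the energy inequality, follow the paper.

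There is, however, one point where you conflate the Navier--Stokes and Euler cases. You write the limiting convective term as a weak-$*$ measure limit $\overline{\varrho\textbf{u}\otimes\textbf{u}}$ and include the defect $\overline{\varrho\textbf{u}\otimes\textbf{u}}-\varrho\textbf{u}\otimes\textbf{u}$ in $\mathfrak{R}$, invoking the convexity of $[\varrho,\textbf{J}]\mapsto |\textbf{J}\cdot\bm{\xi}|^2/\varrho$. That is exactly what Section \ref{Existence Euler} does for the Euler system, where the velocity gradients are not controlled. In the Navier--Stokes case, however, the paper's Section \ref{Limit n to infinity} uses the uniform bound \eqref{uniform bound velocities n} on $\nabla_x \textbf{u}_n$ in $L^2$: combining the weak convergence $\textbf{u}_n\rightharpoonup\textbf{u}$ in $L^2(0,T;W^{1,2})$ with the strong convergence of the momenta $\varrho_n\textbf{u}_n\to\varrho\textbf{u}$ in $C([0,T];W^{-1,2})$ yields, via the Lions-type compensated compactness argument, the weak convergence \eqref{convergence convective terms n} of $\varrho_n\textbf{u}_n\otimes\textbf{u}_n$ directly to $\varrho\textbf{u}\otimes\textbf{u}$ in $L^2(0,T;L^r)$ with $r>1$. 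Hence no convective defect appears, and the limiting Reynolds stress carries only the pressure, quantum, and accumulated contributions:
\begin{equation*}
\textup{d}\mathfrak{R}=\textup{d}\widetilde{\mathfrak{R}}+\bigl(\overline{p(\varrho)}-p(\varrho)\bigr)\mathbb{I}\,\textup{d}x+\hbar\bigl(\overline{\nabla_x\sqrt{\varrho}\otimes\nabla_x\sqrt{\varrho}}-\nabla_x\sqrt{\varrho}\otimes\nabla_x\sqrt{\varrho}\bigr)\textup{d}x.
\end{equation*}
Your version is not wrong — the convective defect you carry happens to vanish, and the convexity argument does yield its non-negativity — but it is coarser and obscures the structural difference between the two systems: the viscous dissipation is precisely what kills the concentration/oscillation defect in the convective flux, which is absent in the Euler case. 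If you adopt the paper's finer treatment you should also drop the reference to the convexity of the kinetic flux when proving positivity of $\mathfrak{R}$, since that term no longer enters the definition.
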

	
	\begin{proposition}[Weak sequential stability for the quantum system] \label{ek weak sequential stability}
		Let $\{ [\varrho_n, \textbf{\textup{J}}_n] \}_{n \in \mathbb{N}}$ be a  family of dissipative solutions of the quantum Euler system \eqref{ek continuity equation}, \eqref{ek balance of momentum} with the corresponding total energies  $\{ E_n\}_{n \in \mathbb{N}}$ and initial data $\{ [\varrho_{0,n}, \textbf{\textup{J}}_{0,n}, E_{0,n}] \}_{n \in \mathbb{N}}$ in the sense of Definition \ref{ek dissipative solution}. If
		\begin{equation*}
		[\varrho_{0,n}, \textbf{\textup{J}}_{0,n}, E_{0,n}] \rightarrow [\varrho_0, \textbf{\textup{J}}_0, E_0] \quad \mbox{in } H,
		\end{equation*}
		then, at least for suitable subsequences,
		\begin{equation} \label{ek convergence in trajectory space}
		[\varrho_n, \textbf{\textup{J}}_n, E_n] \rightarrow [\varrho, \textbf{\textup{J}}, E] \quad \mbox{in } \ \mathfrak{D}([0, \infty); H),
		\end{equation}
		where $[\varrho, \textbf{\textup{J}}]$ is another dissipative solution of the same problem with total energy $E$.
	\end{proposition}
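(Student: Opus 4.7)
The plan is to repeat verbatim the structure of the vanishing viscosity proof of Theorem \ref{ek existence} in Section \ref{Existence Euler}, with the only substantial change that the approximating family of Navier--Stokes dissipative solutions there is replaced by the given sequence $\{[\varrho_n, \textbf{J}_n, E_n]\}_{n\in\mathbb{N}}$ of Euler dissipative solutions. Since $E_{0,n} \to E_0$, the initial energies are uniformly bounded, and the energy inequality \eqref{ek energy inequality} together with Definition \ref{ek dissipative solution}(iv) yield, on any finite interval $[0,T]$, uniform estimates for $|\textbf{J}_n|^2/\varrho_n$ in $L^\infty(0,T;L^1(\Omega))$, for $\nabla_x\sqrt{\varrho_n}$ in $L^\infty(0,T;L^2(\Omega;\mathbb{R}^d))$, for $P(\varrho_n)$ in $L^\infty(0,T;L^1(\Omega))$, and --- after passing to a subsequence along which the constants $\lambda_n$ of Definition \ref{ek dissipative solution}(iv) stay bounded away from $0$ --- for $\trace[\mathfrak{R}_n]$ in $L^\infty(0,T;\mathcal{M}(\overline\Omega;\mathbb{R}^{d\times d}_{\textup{sym}}))$.

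With these bounds I would extract weak limits exactly as at the beginning of Section \ref{Existence Euler}. The bound on $\nabla_x\sqrt{\varrho_n}$ combined with the Sobolev embedding \eqref{Sobolev embedding} gives $\varrho_n$ bounded in $L^\infty(0,T; W^{1, \frac{2\gamma}{\gamma+1}}(\Omega))$; equicontinuity in time with values in $W^{-k,2}(\Omega)$ inherited from \eqref{ek weak formulation continuity equation}, together with the compact embedding \eqref{compact embedding Sobolev}, upgrades this to $\varrho_n \to \varrho$ in $C_{\rm weak,loc}([0,\infty); L^\gamma(\Omega))$, and the same Arzel\`a--Ascoli argument applied to \eqref{ek weak formulation balance of momentum} yields $\textbf{J}_n \to \textbf{J}$ in $C_{\rm weak,loc}([0,\infty); L^q(\Omega;\mathbb{R}^d))$. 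Embedding $L^1\hookrightarrow\mathcal{M}(\overline\Omega)$ provides weak-$*$ limits $p(\varrho_n) \overset{*}{\rightharpoonup} \overline{p(\varrho)}$, $\nabla_x\sqrt{\varrho_n}\otimes\nabla_x\sqrt{\varrho_n} \overset{*}{\rightharpoonup} \overline{\nabla_x\sqrt{\varrho}\otimes\nabla_x\sqrt{\varrho}}$, $\textbf{J}_n\otimes\textbf{J}_n/\varrho_n \overset{*}{\rightharpoonup} \overline{\textbf{J}\otimes\textbf{J}/\varrho}$, and $\mathfrak{R}_n \overset{*}{\rightharpoonup} \widetilde{\mathfrak{R}}$ in $L^\infty(0,T; \mathcal{M}(\overline\Omega;\mathbb{R}^{d\times d}_{\textup{sym}}))$. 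Passing to the limit in \eqref{ek weak formulation continuity equation} and \eqref{ek weak formulation balance of momentum} then produces the weak continuity and momentum identities for the limit pair $[\varrho, \textbf{J}]$ with the Reynolds stress
\begin{equation*}
	\textup{d}\mathfrak{R} = \textup{d}\widetilde{\mathfrak{R}} + (\overline{p(\varrho)} - p(\varrho))\mathbb{I}\,\textup{d}x + \left(\overline{\frac{\textbf{J}\otimes\textbf{J}}{\varrho}} - \frac{\textbf{J}\otimes\textbf{J}}{\varrho}\right)\textup{d}x + \hbar\left(\overline{\nabla_x\sqrt{\varrho}\otimes\nabla_x\sqrt{\varrho}} - \nabla_x\sqrt{\varrho}\otimes\nabla_x\sqrt{\varrho}\right)\textup{d}x,
\end{equation*}
whose positivity $\mathfrak{R}\in\mathcal{M}^+$ follows from the convexity of $\varrho\mapsto p(\varrho)$, of $\varrho\mapsto|\nabla_x\sqrt{\varrho}\cdot\bm{\xi}|^2$, and of the lower semicontinuous extension of $(\varrho,\textbf{J})\mapsto |\textbf{J}\cdot\bm{\xi}|^2/\varrho$, reproducing the argument at the end of Section \ref{Existence Euler}.

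The main obstacle and the only genuinely new point relative to Theorem \ref{ek existence} is the convergence of the scalar energies $E_n$ in the Skorokhod space $\mathfrak{D}([0,\infty))$. Each $E_n$ is uniformly bounded and non-increasing (as follows from \eqref{ek energy inequality} by choosing $\psi$ approximating the indicator of an interval), so Helly's selection theorem furnishes a subsequence converging pointwise to a non-increasing limit $E$; after fixing the c\`{a}gl\`{a}d representative, pointwise convergence at all but countably many points is equivalent to convergence in $\mathfrak{D}([0,\infty))$. Passing to the limit in \eqref{ek energy inequality} for any $\psi\in C^1_c([0,\infty))$, $\psi\geq 0$, is then a standard application of dominated convergence together with the one-sided limits of $E_n$ at $\tau_1^-$ and $\tau_2^+$, while the identity
\begin{equation*}
	E(\tau) = \int_\Omega\left[\frac{1}{2}\frac{|\textbf{J}|^2}{\varrho} + P(\varrho) + \frac{\hbar}{2}|\nabla_x\sqrt{\varrho}|^2\right](\tau, \cdot)\,\textup{d}x + \frac{1}{\lambda}\int_{\overline\Omega}\textup{d}\trace[\mathfrak{R}](\tau)
\end{equation*}
for a.e. $\tau>0$ is forced by the very construction of $\mathfrak{R}$, since each convexity defect between the weak-$*$ limit of a nonlinear quantity and the corresponding nonlinearity of the weak limit is precisely what was absorbed into $\trace[\mathfrak{R}]$. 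Combined with the continuity-in-time convergences already established for $\varrho_n$ and $\textbf{J}_n$, this delivers \eqref{ek convergence in trajectory space}.
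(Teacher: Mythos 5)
Your proposal follows essentially the same route as the paper's sketch: re-run the stability argument from Sections~\ref{Limit n to infinity} and \ref{Existence Euler} to extract weak limits and build the limit Reynolds stress, then obtain the Skorokhod convergence by combining the compact embedding \eqref{compact embedding Sobolev} (which upgrades $C_{\rm weak,loc}$ convergence of $[\varrho_n,\textbf{J}_n]$ to $C_{\rm loc}$ convergence in $W^{-k,2}$, hence uniform-in-time convergence in $H$) with Helly's selection theorem for the monotone energies $E_n$. One correction is in order: to obtain a uniform bound on $\trace[\mathfrak{R}_n]$ from the energy identity in Definition~\ref{ek dissipative solution}(iv), namely $\int_{\overline\Omega}\textup{d}\trace[\mathfrak{R}_n](\tau)\leq\lambda_n E_n(\tau)$, you need the constants $\lambda_n$ to be bounded \emph{from above}, not ``bounded away from $0$'' — and this is not something achievable by passing to a subsequence of an arbitrary sequence of positive reals. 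The intended reading (as the construction in Lemma~\ref{limit epsilon to zero} shows) is that $\lambda=\max\{d(\gamma-1),2\}$ is a fixed structural constant, so $\lambda_n\equiv\lambda$ and no extraction is needed; stated as you did, the step does not go through.
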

	
	We are not going to show the two aforementioned propositions in details since the proofs would be essentially a repetition of what was done in Sections \ref{Limit n to infinity} and \ref{Existence Euler}. We just point out that the convergences
	\begin{equation*}
		[\varrho_n, \textbf{J}_n] \rightarrow [\varrho, \textbf{J}] \quad \mbox{in }  C_{\textup{weak,loc}}([0,\infty); L^p(\Omega) \times L^q(\Omega; \mathbb{R}^d))
	\end{equation*}
	can be strengthened to
	\begin{equation*}
		[\varrho_n, \textbf{J}_n] \rightarrow [\varrho, \textbf{J}] \quad \mbox{in }  C_{\textup{loc}}([0,\infty); W^{-k,2}(\Omega) \times W^{-k,2}(\Omega; \mathbb{R}^d))
	\end{equation*}
	thanks to the compact embedding \eqref{compact embedding Sobolev}, implying in particular that,
	\begin{equation*}
		[\varrho_n, \textbf{J}_n] \rightarrow [\varrho, \textbf{J}] \quad \mbox{in }  \mathfrak{D}([0,\infty); W^{-k,2}(\Omega) \times W^{-k,2}(\Omega; \mathbb{R}^d)),
	\end{equation*}
	as for continuous functions, the convergence in the Skorokhod space coincides with the uniform one, cf. condition (ii) of Proposition 2.1 in \cite{Bas}. Moreover, the energies $\{ E_n \}_{n\in \mathbb{N}}$ are non-increasing functions, locally of bounded variation; therefore, from Helly's selection theorem, there exists a subsequence converging pointwise
	\begin{equation*}
		E_n(t) \rightarrow E(t) \quad \mbox{for all } t \in [0,\infty),
	\end{equation*}
	implying in particular that
	\begin{equation*}
		E_n \rightarrow E \quad \mbox{in } \mathfrak{D}([0,\infty)),
	\end{equation*}
	as for monotone functions, the convergence in the Skorokhod space coincides with the almost everywhere one, cf. condition (i) of Proposition 2.1 in \cite{Bas}.
	\appendix
	
	\section{Appendix}
	
	\subsection{Function spaces} \label{Function spaces}
	
	Let $Q\subseteq \mathbb{R}^N$, $N\geq 1$, be an open set, $X$ a Banach space and $M\geq 1$. We denote with
	\begin{itemize}
		\item $C_{\textup{weak}}(Q; X)$ the space of functions defined on $Q$ and ranging in $X$ which are continuous with respect to the weak topology. If $Q$ is bounded, we say that $f_n \rightarrow f \ \mbox{in } C_{\textup{weak}}(\overline{Q}; X)$ as $n\rightarrow \infty$ if for all $g\in X^*$
		\begin{equation*}
		\sup_{y\in \overline{Q}} \left|\langle g; f_n(y)-f(y) \rangle_{X^*, X} \right| \rightarrow 0 \quad \mbox{as } n\rightarrow \infty;
		\end{equation*}
		 \item $C^k(Q; X)$, with $k$ a non-negative integer, the space of $k$-times continuously differentiable functions on $Q$ and $C^{\infty}(Q; X) = \bigcap_{k=0}^{\infty} C^k(Q; X)$;
		 \item $\mathcal{D}(Q; X)=C^{\infty}_c(Q; X)$ the space of functions belonging to $C^{\infty}(Q; X)$ and having compact support in $Q$;
		 \item $\mathcal{D}'(Q; \mathbb{R}^M)= [C^{\infty}_c(Q; \mathbb{R}^M)]^*$ the space of distributions;
		 \item $\mathcal{M}(Q; \mathbb{R}^M)= \big[\overline{C_c(Q; \mathbb{R}^M)}^{\| \cdot \|_{\infty}} \big]^*$ the space of vector-valued Radon measures. If $\Omega \subset \mathbb{R}^N$ is a bounded domain, then $\mathcal{M}(\overline{\Omega})= [C(\overline{\Omega})]^*$.
		
		 \item $\mathcal{M}^+(Q)$ the space of positive Radon measures;
		
		 \item $\mathcal{M}^+(Q; \mathbb{R}^{N\times N}_{\textup{sym}})$ the space of tensor--valued Radon measures $\mathfrak{R}$ such that $\mathfrak{R}: (\xi \otimes \xi) \in \mathcal{M}^+(Q)$ for all $\xi \in \mathbb{R}^d$, and with components $\mathfrak{R}_{i,j}=\mathfrak{R}_{j,i}$;
		 \item $L^p(Q;X)$, with $1\leq p\leq \infty$, the Lebesgue space defined on $Q$ and ranging in $X$;
		 \item $W^{k,p}(Q; \mathbb{R}^M)$, with $1\leq p\leq \infty$ and $k$ a positive integer, the Sobolev space defined on $Q$;
		 \item $W^{-k, p'}(Q; \mathbb{R}^m)$, with $p'$ the conjugate exponent of $1\leq p< \infty$ and $k$ a positive integer, the dual space of $W_0^{k,p}(Q; \mathbb{R}^m)=\big[\overline{C_c(Q; \mathbb{R}^M)}^{\| \cdot\|_{W^{k,p}(Q; \mathbb{R}^M)}} \big]^*$;
		 \item $\mathfrak{D}([0,\infty); H)$ the Skorokhod space of c\`{a}gl\`{a}d functions defined on $[0,\infty)$ taking values in a Hilbert space $H$. More precisely, $\Phi$ belongs to the space $\mathfrak{D}([0,\infty); H)$ if it is left--continuous and has right--hand limits:
		 \begin{itemize}
		 	\item[(i)] for $t>0$, \ $\Phi(t-)=\lim_{s \uparrow t} \Phi(s)$ exists \ and \  $\Phi(t-)=\Phi(t)$;
		 	\item[(ii)] for $t\geq 0$, \  $\Phi(t+)=\lim_{s\downarrow t} \Phi(s)$ exists.
		 \end{itemize}
	\end{itemize}
	
	\subsection{Energy} \label{Energies}
	
	In this section, we will show how to deduce the total energy balances \eqref{nsk total energy balance} and \eqref{ek total energy balance}. First of all, introducing the \textit{drift velocity} $\textbf{v}=\textbf{v}(\varrho, \nabla_x \varrho)$ such that
	\begin{equation} \label{vector q}
	\textbf{v}= \frac{\nabla_x \sqrt{\varrho}}{\sqrt{\varrho}},
	\end{equation}
	and taking the gradient in the continuity equations \eqref{nsk continuity equation}, \eqref{ek continuity equation}, we get extra equations for $\textbf{v}$:
	\begin{equation} \label{nsk derived continuity equation}
	\partial_t(\varrho \textbf{v}) + \divv_x(\varrho \textbf{v} \otimes \textbf{u}) + \frac{1}{2} \divv_x \big( \varrho \nabla_x^{\top} \textbf{u} \big)=0,
	\end{equation}
	when considering system \eqref{nsk continuity equation}--\eqref{nsk balance of momentum}, and
	\begin{equation}
	\partial_t(\varrho \textbf{v}) + \frac{1}{2}\divv_x \nabla_x^{\top} \textbf{J}=0,
	\end{equation}
	when considering system \eqref{ek continuity equation}--\eqref{ek balance of momentum}. Furthermore, notice that we can write
	\begin{equation*}
	\mathbb{K}(\varrho, \nabla_x\textbf{v})= \frac{\hbar}{2} \varrho \nabla_x \textbf{v}.
	\end{equation*}
	
	Supposing that all the quantities in question are smooth, we can multiply the balance of momentum \eqref{nsk balance of momentum} of the quantum Navier--Stokes system by $\textbf{u}$ and, using the continuity equation \eqref{nsk continuity equation}, we can deduce
	\begin{equation} \label{expression 2}
	\begin{aligned}
	\partial_t \left( \frac{1}{2}\varrho |\textbf{u}|^2 \right) + \divv_x \left( \left[ \frac{1}{2}\varrho |\textbf{u}|^2 +p(\varrho)\right] \textbf{u} \right) - p(\varrho) \divv_x \textbf{u} &+ \mathbb{S}(\nabla_x \textbf{u}): \nabla_x \textbf{u} + \frac{\hbar}{2} \varrho \nabla_x \textbf{v}: \nabla_x \textbf{u} \\
	&= \divv_x \left(\mathbb{S}(\nabla_x \textbf{u}) \cdot \textbf{u}  + \mathbb{K}(\varrho, \nabla_x \textbf{v}) \cdot \textbf{u} \right).
	\end{aligned}
	\end{equation}
	Similarly, we multiply \eqref{nsk derived continuity equation} by $\textbf{v}$ to get
	\begin{equation} \label{expression 1}
	\partial_t \left( \frac{1}{2}\varrho |\textbf{v}|^2 \right) + \divv_x \left( \frac{1}{2}\varrho |\textbf{v}|^2 \textbf{u} \right) - \frac{1}{2}\varrho \nabla_x \textbf{u}: \nabla_x \textbf{v}= - \frac{1}{2}\divv_x \big( \varrho \nabla_x^{\top} \textbf{u} \cdot \textbf{v}\big),
	\end{equation}
	where we used the fact that
	\begin{equation} \label{identity}
	\nabla_x^{\top} \textbf{u}: \nabla_x \textbf{v}= \nabla_x \textbf{u}: \nabla_x^{\top} \textbf{v} = \nabla_x \textbf{u}: \nabla_x \textbf{v}
	\end{equation}
	since $\nabla_x \textbf{v}$ is symmetric. Multiplying \eqref{expression 1} by $\hbar$ and summing the obtained identity to \eqref{expression 2} we get
	\begin{align*}
	\partial_t \left( \frac{1}{2}\varrho |\textbf{u}|^2 + \frac{\hbar}{2} \varrho |\textbf{v}|^2\right) &+ \divv_x \left( \left[ \frac{1}{2}\varrho |\textbf{u}|^2 +p(\varrho) + \frac{\hbar}{2} |\textbf{v}|^2\right] \textbf{u} \right) - p(\varrho) \divv_x \textbf{u} + \mathbb{S}(\nabla_x \textbf{u}): \nabla_x \textbf{u} \\
	&\hspace{3.5cm}= \divv_x \left(\mathbb{S}(\nabla_x \textbf{u}) \cdot \textbf{u}  + \mathbb{K}(\varrho, \nabla_x\textbf{v}) \cdot \textbf{u} - \frac{\hbar}{2} \varrho \nabla_x^{\top} \textbf{u} \cdot \textbf{v} \right).
	\end{align*}
	Recalling that the pressure potential $P=P(\varrho)$ is characterized by \eqref{equation pressure potential}, from the continuity equation \eqref{nsk continuity equation} we obtain the following identity
	\begin{equation*}
	-p(\varrho) \divv_x \textbf{u} = \partial_t P(\varrho) +\divv_x [P(\varrho) \textbf{u}].
	\end{equation*}
	Now, it is enough to integrate over $\Omega$ and use the boundary conditions to get the desired expressions.

\end{document}